\theoremstyle{plain}
\newtheorem{mainthm}{Theorem}
\newtheorem{thm}{Theorem}[section]\newtheorem*{thm*}{Theorem}
\newtheorem{lem}[thm]{Lemma}\newtheorem{lemma}[thm]{Lemma}
\newtheorem{corollary}[thm]{Corollary}
\newtheorem{prop}[thm]{Proposition}
\theoremstyle{definition}
\newtheorem{defn}[thm]{Definition}
\newtheorem{example}[thm]{Example}
\theoremstyle{remark}
\newtheorem{rem}[thm]{Remark}\newtheorem{remark}[thm]{Remark}
\newcommand{\lr}{\ell_R}
\newcommand{\one}{\mathbf{1}}
\newcommand{\zero}{\mathbf{0}}
\newcommand{\R}{\mathbb{R}}
\newcommand{\Z}{\mathbb{Z}}
\newcommand{\size}[1]{\ensuremath{\vert #1 \vert}}
\newcommand{\onto}{\twoheadrightarrow}
\newcommand{\into}{\hookrightarrow}
\newcommand{\id}{\mathrm{1}} 
\newcommand{\nll}{\nu}
\newcommand{\ncplx}{\textsc{NullCplx}}
\newcommand{\nprt}{\Pi^0}
\newcommand{\spn}{\textsc{Span}}
\newcommand{\arr}{\textsc{Arr}}
\newcommand{\prt}{\Pi}
\newcommand{\cyc}{\textsc{cyc}}
\newcommand{\Pos}{\textsc{Pos}}
\newcommand{\Neg}{\textsc{Neg}}
\newcommand{\Basic}{\textsc{Basic}}
\newcommand{\mov}{\textsc{Mov}}
\newcommand{\fix}{\textsc{Fix}}
\newcommand{\im}{\textsc{Im}}
\newcommand{\Ker}{\textsc{Ker}}
\newcommand{\Sym}{\mathfrak{S}}
\newcommand{\affS}{\widetilde{\mathfrak{S}}}
\tikzstyle{Square}=[inner sep=2, draw=black,fill=white]
\tikzstyle{Rounded}=[inner sep=2, draw=black,fill=white, rounded corners]
\tikzstyle{MiniRounded}=[Rounded,midway,scale=.75]
\tikzstyle{Circle}=[draw=black, fill=white,circle, inner sep=1]
\tikzstyle{Poly}=[thick,color=green!20!black,fill=green!30,join=bevel]
\begin{document}

\title[Computing affine reflection length]{Computing
  reflection length in\\ an affine Coxeter group}

\author[J. B. Lewis]{Joel Brewster Lewis}
\thanks{Work of Lewis supported by NSF grant DMS-1401792}

\author[J. McCammond]{Jon McCammond}

\author[T. K. Petersen]{T. Kyle Petersen}
\thanks{Work of Petersen supported by a Simons Foundation
  collaboration travel grant.}

\author[P. Schwer]{Petra Schwer}
\thanks{Work of Schwer supported by DFG grant SCHW 1550 4-1 within SPP
  2026} 


\date{\today}

\begin{abstract}
  In any Coxeter group, the conjugates of elements in its Coxeter
  generating set are called reflections and the reflection length of
  an element is its length with respect to this expanded generating
  set.  In this article we give a simple formula that computes the
  reflection length of any element in any affine Coxeter group and we
  provide a simple uniform proof.
\end{abstract}

\maketitle
\tableofcontents

\section*{Introduction}

In any Coxeter group $W$, the conjugates of elements in its standard
Coxeter generating set are called \emph{reflections} and we write $R$
for the set of all reflections in $W$.  The reflections generate $W$
and the associated \emph{reflection length} function $\lr\colon W \to \Z_{\geq 0}$
records the length of $w$ with respect to this expanded generating set.
When $W$ is spherical, i.e., finite, reflection length can be given an
intrinsic, geometric definition, as follows. Define a \emph{root subspace} to be
any space spanned by a subset of the corresponding root system and define the
\emph{dimension} $\dim(w)$ of an element $w$ in $W$ to be the minimum dimension
of a root subspace that contains the move-set of $w$.  (See
Section~\ref{sec:dim} for the details.)

\begin{thm*}[{\cite[Lem.~2]{Carter72}}]
Let $W$ be a spherical Coxeter group and let $w \in W$.  Then $\lr(w) = \dim(w)$.
\end{thm*}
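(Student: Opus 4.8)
The plan is to establish the two inequalities $\lr(w) \geq \dim(w)$ and $\lr(w) \leq \dim(w)$ separately, working throughout with the moved space $\mov(w) = \im(w - \id)$ and its complement $\fix(w) = \Ker(w - \id)$. Since $w$ acts as an isometry these are orthogonal, so $V = \mov(w) \oplus \fix(w)$ and $\dim \mov(w) = \dim V - \dim \fix(w)$; in particular $w$ has no nonzero fixed vector inside $\mov(w)$, so $(w - \id)$ restricts to a linear bijection of $\mov(w)$, a fact I will use crucially later.

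For the lower bound, the key observation is that $\im(t_\alpha - \id) = \R\alpha$ for a single reflection $t_\alpha$, together with the elementary inclusion $\im(uv - \id) \subseteq \im(u - \id) + \im(v - \id)$ for any two isometries. Applying this repeatedly, if $w = t_{\alpha_1}\cdots t_{\alpha_k}$ then $\mov(w) \subseteq \mathrm{span}(\alpha_1,\dots,\alpha_k)$, and the right-hand side is a root subspace of dimension at most $k$. Hence $\dim(w) \leq k$, and taking $k = \lr(w)$ gives $\lr(w) \geq \dim(w)$. This direction is routine.

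The next step is a structural fact that simplifies everything: $\mov(w)$ is itself a root subspace, so that $\dim(w) = \dim \mov(w)$. I would obtain this from Steinberg's fixed-point theorem. Since $w$ fixes $\fix(w)$ pointwise, it lies in the pointwise stabilizer of $\fix(w)$, which is generated by the reflections $t_\alpha$ with $\alpha \perp \fix(w)$, i.e.\ with $\alpha \in \mov(w)$. By the lower-bound computation applied to such a factorization, these roots span $\mov(w)$; in particular $\RS \cap \mov(w)$ is nonempty whenever $w \neq \id$. With the upper bound in hand this also re-proves that the minimal containing root subspace is $\mov(w)$ itself.

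For the upper bound I would induct on $d = \dim \mov(w)$, the base case $d = 0$ being $w = \id$. For $d \geq 1$, pick any root $\beta \in \RS \cap \mov(w)$ (nonempty by the previous step) and set $v := -(w - \id)^{-1}\beta \in \mov(w) \setminus \{0\}$, using the bijectivity noted above, so that $\beta = v - wv$. A direct computation shows the reflection $t_\beta$ sends $v \mapsto wv$, whence $t_\beta w$ fixes $v$; moreover $t_\beta w$ fixes $\fix(w)$ pointwise since $\beta \perp \fix(w)$. As $v \in \mov(w)$ is orthogonal to $\fix(w)$, we get $\fix(t_\beta w) \supseteq \fix(w) \oplus \R v$, so $\dim \mov(t_\beta w) \leq d - 1$; the reverse inequality holds because $w = t_\beta(t_\beta w)$ forces one reflection to change the dimension of the moved space by at most one. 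Thus $\dim \mov(t_\beta w) = d - 1$, and by induction $\lr(t_\beta w) \leq d - 1$, giving $\lr(w) \leq d = \dim(w)$. The main obstacle is precisely guaranteeing that some root lies inside $\mov(w)$, which is what makes the inductive $t_\beta$ a genuine reflection of $W$; this is the only non-elementary input, supplied by Steinberg's theorem.
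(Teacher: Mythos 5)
Your proof is correct, but note that the paper never proves this statement at all: it is quoted from Carter \cite{Carter72} as known, and even the closest internal results (Lemma~\ref{lem:red-ell} and Proposition~\ref{prop:equal}) are delegated to citations of Brady--McCammond. Measured against what the paper does write down, your lower bound is exactly the paper's own argument in Proposition~\ref{prop:inequal}, via the move-set inclusion of Proposition~\ref{prop:move-add}; your upper bound supplies the content the paper outsources. Your route for that half --- Steinberg's fixed-point theorem to show that $\mov(w)$ is spanned by the roots it contains (hence is itself a root subspace, so $\dim(w)=\dim\mov(w)$), followed by downward induction using the explicitly verified identity $r_\beta(v)=wv$ for $\beta = v - wv$, which hinges on $2\langle v,\beta\rangle = \langle\beta,\beta\rangle$ --- is essentially Carter's own inductive scheme, with Steinberg's theorem substituted as the mechanism guaranteeing a root inside $\mov(w)$. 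What your version buys is a self-contained proof plus the stronger structural fact that the minimizing root subspace in the definition of $\dim(w)$ is $\mov(w)$ itself, a fact the paper also needs and obtains from Lemma~\ref{lem:red-ell}; the cost is invoking Steinberg's theorem, a heavier input than anything else in the argument, as you yourself flag. Two minor points: your notation $t_\beta$ for reflections collides with the paper's use of $t_\lambda$ for translations, so it should be changed to $r_\beta$ throughout; and in the induction you only need $\dim\mov(r_\beta w)\le d-1$ together with strong induction on $d$, so the sentence establishing the reverse inequality $\dim\mov(r_\beta w)\ge d-1$ is dispensable.
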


When $W$ is infinite, much less is known \cite{McPe-11, Duszenko12,
  MST-affine-deligne-lusztig}, even in the affine case.  In this
article, we give a simple, analogous formula that computes the
reflection length of any element in any affine Coxeter group and we
provide a simple uniform proof.

\newcommand{\maindim}{0}
\begin{mainthm}[Formula]\label{main:dim}
  Let $W$ be an affine Coxeter group and let $p\colon W \onto W_0$ be
  the projection onto its associated spherical Coxeter group.  For any
  element $w\in W$, its reflection length is 
  \[\lr(w) = 2 \cdot \dim(w) - \dim(p(w)) = 2d + e,\] 
  where $e = \dim(p(w))$ and $d = \dim(w)-\dim(p(w))$.
\end{mainthm}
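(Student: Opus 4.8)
The plan is to prove the two inequalities $\lr(w)\le 2d+e$ and $\lr(w)\ge 2d+e$ separately, after recording the geometry that drives both. Write $\bar w=p(w)$ and $U=\im(\bar w-\id)$. Since the move-set of a spherical element is itself a root subspace, Carter's lemma gives $e=\dim(p(w))=\rk(\bar w-\id)=\dim U$. Decomposing $w=t_\mu w'$ into a translation along the fixed axis of $\bar w$ followed by an elliptic isometry $w'$, one has $\mu\perp U$ because $\ker(\bar w-\id)=U^\perp$, so $\mov(w)=\mu+U$ is an affine set whose minimal enclosing root subspace has dimension $\dim(w)=e+d$, with $d=0$ exactly when $\mu=0$, i.e. when $w$ has a fixed point. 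Thus $d=\dim(w)-\dim(p(w))\ge 0$ and the target is $2d+e=2\dim(w)-\dim(p(w))$.

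For the lower bound I would study how the integer $f(w)\define 2\dim(w)-\dim(p(w))$ changes under left multiplication by a single reflection $r=s_\alpha$, aiming to show $|f(rw)-f(w)|\le 1$; telescoping a minimal factorization then yields $f(w)\le \lr(w)$. Two routine ingredients feed in. First, $\dim(p(rw))=\dim(p(w))\pm 1$, since reflection length in the finite group $W_0$ changes by exactly one (parity together with the triangle inequality) and equals $\dim$ there by Carter. Second, $|\dim(rw)-\dim(w)|\le 1$, because the telescoping identity gives $\mov(rw)\subseteq \R\alpha+\mov(w)$, so adjoining the single root $\alpha$ to a minimal root subspace raises its dimension by at most one. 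The crux is to rule out the one combination that would let $f$ drop by more than one, namely $\dim(p(\cdot))$ increasing while $\dim(\cdot)$ decreases.

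The key lemma handling this is the observation, implicit in the proof of Carter's lemma, that if $\dim(p(rw))=\dim(p(w))+1$ then $\im(s_\alpha\bar w-\id)=U\oplus\R\alpha$, so the root $\alpha$ lies in the linear span of $\mov(rw)$. Consequently any minimal root subspace $M$ for $rw$ already contains $\alpha$, and since $\mov(w)\subseteq \R\alpha+\mov(rw)\subseteq M$ we get $\dim(w)\le \dim(rw)$: the total dimension cannot decrease precisely when the spherical dimension increases. Applying the same statement to the transition $rw\mapsto r(rw)=w$ rules out the mirror-image bad case, so $|f(rw)-f(w)|\le 1$ and hence $\lr(w)\ge 2d+e$.

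For the upper bound I would construct a factorization inside a fixed minimal root subspace $M$ of dimension $e+d$ containing $\mov(w)$. The elliptic factor $w'$ is produced by $e=\dim(p(w))$ reflections through a common fixed point, using Carter's lemma inside the finite reflection group generated by the reflections with roots in $M$. The translation $t_\mu$ is produced by $2d$ reflections: choosing roots $\beta_1,\dots,\beta_d$ whose directions complete $U$ to $M$ and writing $\mu$ as a combination of the corresponding coroot directions, each translation along a single root direction is a product of two reflections in parallel affine Coxeter hyperplanes; since these commute and compose directly with $w'$, assembling them exhibits $w$ as a product of $e+2d$ reflections. I expect the main obstacle to be the lower bound's key lemma, controlling the interaction between the affine invariant $\dim$ and the spherical invariant $\dim\circ p$ under one reflection, together with checking that the translation and elliptic pieces can be realized simultaneously by Coxeter (not merely Euclidean) reflections within $M$; once $\alpha$'s presence in the spherical move-space is pinned down, the doubling $2d$ falls out cleanly.
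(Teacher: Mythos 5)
Your lower bound is correct, and it takes a genuinely different route from the paper. Proposition~\ref{prop:thmA-lb} argues globally: it takes a minimal factorization, uses the Hurwitz rewriting of Lemma~\ref{lem:hurwitz} to bring a root basis to the front, and then compares dimensions across the splitting $w=uv$. You instead prove that $f(w)=2\dim(w)-\dim(p(w))$ is $1$-Lipschitz under multiplication by a single reflection and telescope along any factorization, with no Hurwitz moves at all. Your key lemma is sound: if $\dim(p(rw))=\dim(p(w))+1$, then from $\mov(p(rw))\subseteq\R\alpha+\mov(p(w))$ and the fact that both sides are linear subspaces, a dimension count forces $\mov(p(rw))=\R\alpha\oplus\mov(p(w))$, so $\alpha\in\mov((rw)_e)$; by Lemma~\ref{lem:sep}, any root subspace containing $\mov(rw)$ contains $\mov((rw)_e)$, hence contains $\alpha$, hence contains $\R\alpha+\mov(rw)\supseteq\mov(w)$, so $\dim(w)\le\dim(rw)$. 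Combined with $\dim(p(rw))=\dim(p(w))\pm1$ (Carter plus parity) and $|\dim(rw)-\dim(w)|\le1$, this rules out both bad cases and yields $\lr(w)\ge 2d+e$.

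Your upper bound, however, has a genuine gap, located exactly at the point you dismissed as a "check." You build on the glide decomposition $w=t_\mu w'$ with $\mu\perp U$ and want $t_\mu$ to be a product of $2d$ reflections of $W$ through parallel Coxeter hyperplanes. But the factors of the glide decomposition are in general not elements of $W$ at all: a product of two parallel reflections $r_{\beta,j}r_{\beta,j'}$ of $W$ is translation by the \emph{integer} multiple $(j-j')\beta^\vee$ of a coroot, whereas the glide vector $\mu$ is the orthogonal projection of a coroot-lattice vector onto $U^\perp$ and typically does not lie in $L(\Phi^\vee)$; worse, its direction need not lie in any root line. Concretely, in affine $A_2$ take $w=t_{\alpha_2^\vee}r_{\alpha_1}$, a glide reflection with $d=e=1$: then $U=\R\alpha_1$, the glide vector is $\mu=\alpha_2^\vee+\tfrac12\alpha_1^\vee\notin L(\Phi^\vee)$, and $U^\perp$ contains no roots whatsoever, so $t_\mu$ is not a product of reflections of $W$ in any number. (The factorization that does work for this $w$ is the normal form $t_{\alpha_2^\vee}\cdot r_{\alpha_1}$ itself, whose translation vector is \emph{not} orthogonal to $U$; orthogonality is the wrong thing to insist on.)

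What your construction actually requires is a translation-elliptic factorization $w=t_{\lambda-\nu}\,(t_\nu u)$ with both factors in $W$, $\nu\in L(\Phi^\vee)\cap U$, and $\lambda-\nu$ contained in a $d$-dimensional root subspace. The existence of such a factorization is essentially Theorem~\ref{main:fact}, which the paper proves only \emph{after} Theorem~\ref{main:dim}, using the nontrivial Hurwitz-orbit result of Lewis and Reiner (Proposition~\ref{prop:inefficient}); see also Example~\ref{ex:insuff} for how badly naive choices of the splitting fail. So your route is at best circular. The paper's Proposition~\ref{prop:thmA-ub} sidesteps all lattice arithmetic: choose a relative root basis $\alpha_1,\dots,\alpha_d$ of the minimal root subspace $M$ over $U$ and any reflections $r_i\in R$ with those roots; then $v=wr_1\cdots r_d$ has $\mov(v)=M$, a linear subspace, so $v$ is elliptic (Lemma~\ref{lem:ell}) with $\lr(v)=d+e$ (Proposition~\ref{prop:equal}), and $w=vr_d\cdots r_1$ gives $\lr(w)\le(d+e)+d=2d+e$. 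Replacing your translation-splitting step with this multiplication trick repairs the proof.
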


We call $e = e(w) = \dim(p(w))$ the \emph{elliptic dimension} of $w$
and we call $d = d(w) = \dim(w) - \dim(p(w))$ the \emph{differential
  dimension} of $w$.  Both statistics are geometrically meaningful.
For example, $e(w) = 0$ if and only if $w$ is a translation (that is,
if it sends every point $x$ to $x+\lambda$ for some fixed vector
$\lambda$), and $d(w)=0$ if and only if $w$ is elliptic (that is, if
it fixes a point) -- see Proposition~\ref{prop:stat-geo}.

The translation and elliptic versions of our formula were already
known (see \cite{McPe-11} and \cite{Carter72}, respectively) and these
special cases suggest that the formula in Theorem~\ref{main:dim} might
correspond to a carefully chosen factorization of $w$ as a product of
a translation and an elliptic element.  This is indeed the case.

\newcommand{\mainfact}{1}
\begin{mainthm}[Factorization]\label{main:fact}
  Let $W$ be an affine Coxeter group.  For every element $w\in W$, there
  is a translation-elliptic factorization $w = t_\lambda  u$ such that
  $\lr(t_\lambda) = 2d(w)$ and $\lr(u) = e(w)$.  
  In particular, $\lr(w) = \lr(t_\lambda) + \lr(u)$ for
  this factorization of $w$.
\end{mainthm}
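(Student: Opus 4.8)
The plan is to build the factorization from the canonical decomposition of an affine isometry into a translation along its fixed directions times an elliptic rotation, and then to spend the translation vector optimally within the freedom that remains. Write $\bar w = p(w)$ for the linear part of $w$ and $M = \mov(\bar w)$ for the moved space of $\bar w$; by Carter's Lemma $M$ is a root subspace with $\dim M = \dim(p(w)) = e(w)$, and $\fix(\bar w) = M^\perp$. Since $W = L \rtimes W_0$ with $L$ the translation lattice, I would write $w = t_\tau \bar w$ with $\tau \in L$ and decompose $\tau = \tau_0 + \tau_1$ with $\tau_0 \in M^\perp$ and $\tau_1 \in M$. A direct computation shows that the move-set $\mov(w)$ is the affine subspace $\tau_0 + M$, that an element $t_\lambda^{-1} w$ is elliptic exactly when $\lambda \in \tau_0 + M$, and that $t_\lambda \in W$ iff $\lambda \in L$. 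Hence the admissible translation vectors are precisely the lattice points in the move-set, $\mov(w)\cap L = \tau + (M\cap L)$, a nonempty set since it contains $\tau$ itself.

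For every admissible $\lambda$ the elliptic factor $u = t_\lambda^{-1} w$ has linear part $\bar w$, so it fixes a point and $\mov(u)$ is the linear space $M$. The known elliptic case of the formula then gives $\lr(u) = \dim(u) = \dim M = e(w)$ for any admissible choice, so the length of the elliptic factor is automatic. For the translation factor the known translation case gives $\lr(t_\lambda) = 2\dim(t_\lambda)$, where $\dim(t_\lambda)$ is the dimension of the smallest root subspace containing $\lambda$; it therefore remains only to choose $\lambda$ so that $\dim(t_\lambda) = d(w)$.

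One inequality is easy and holds for every admissible $\lambda$. If $N$ is a root subspace containing $\lambda = \tau_0 + v$ with $v \in M$, then $N + M$ is again a root subspace (its root set is the union of those of $N$ and $M$) and it contains $\lambda + M = \tau_0 + M = \mov(w)$; hence $\dim(N+M) \ge \dim(w)$, so $\dim(N) \ge \dim(N+M) - \dim M \ge \dim(w) - e(w) = d(w)$. Thus $\dim(t_\lambda) \ge d(w)$ always, which already yields $\lr(t_\lambda) \ge 2d(w)$.

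The heart of the argument, and the step I expect to be the main obstacle, is to exhibit a single admissible $\lambda$ attaining $\dim(t_\lambda) = d(w)$. Fixing a minimal root subspace $N_0 \supseteq \mov(w)$, one has $N_0 \supseteq M$, $\tau_0 \in N_0$, and $\dim N_0 = e + d$; choosing roots $\beta_1,\dots,\beta_e$ spanning $M$ and extending to roots $\beta_1,\dots,\beta_e,\gamma_1,\dots,\gamma_d$ spanning $N_0$ lets one write $\tau_0 = \sum_i a_i \beta_i + \lambda_0$ with $\lambda_0 \in N' := \mathrm{span}(\gamma_1,\dots,\gamma_d)$, a $d$-dimensional root subspace, and $\lambda_0 \equiv \tau_0 \pmod M$. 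The difficulty is purely arithmetic: the transverse component must be arranged to be an \emph{actual} lattice point lying in the coset $\tau + (M\cap L)$, which requires a compatibility lemma between the root system and the lattice $L$ reflecting the crystallographic integrality of $\Phi$ and $L$. I would prove such a lemma — that the coset $\mov(w)\cap L$ meets a $d$-dimensional root subspace — by a Carter-style peeling argument inside the finite subsystem $\Phi \cap N_0$, successively absorbing the $M$-component of $\tau_0$ by lattice translations parallel to $M$. Granting this, the chosen $\lambda$ gives $\dim(t_\lambda)=d(w)$, hence $\lr(t_\lambda) = 2d(w)$ and $\lr(u) = e(w)$. Finally, subadditivity of reflection length gives $\lr(w) \le \lr(t_\lambda) + \lr(u) = 2d(w) + e(w)$, and the reverse inequality is exactly the Formula of Theorem~\ref{main:dim}, yielding the asserted equality $\lr(w) = \lr(t_\lambda) + \lr(u)$.
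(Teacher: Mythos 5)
Your reductions are correct and genuinely clarifying: every translation-elliptic factorization $w = t_\lambda u$ has $p(u)=p(w)$, so $\lr(u)=e(w)$ is automatic (Proposition~\ref{prop:equal}); the admissible translation vectors are exactly the lattice points $\mov(w)\cap L = \tau + (M\cap L)$; and by the translation formula of \cite[Proposition~4.3]{McPe-11} together with Theorem~\ref{main:dim}, the whole theorem is equivalent to the arithmetic statement that \emph{the coset $\tau+(M\cap L)$ meets some root subspace of dimension $d(w)$}. Your inequality $\dim(t_\lambda)\ge d(w)$ is also correct. But this is where the proposal stops rather than where it finishes: that existence statement carries the entire content of Theorem~\ref{main:fact} (given your reductions, the two are logically equivalent), and it is not proved. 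Writing $N_0 = M\oplus N'$ with $N'=\spn(\{\gamma_1,\dots,\gamma_d\})$ a root subspace and splitting $\tau_0 = m + \lambda_0$ accordingly does not produce an admissible vector, because the component $m$ lies in $M$ but generally not in $L$, so $\lambda_0$ need not lie in $\tau+(M\cap L)$; the phrase ``successively absorbing the $M$-component of $\tau_0$ by lattice translations parallel to $M$'' names the difficulty without resolving it --- which lattice vectors, and why should they exist for every crystallographic $\Phi$, every root subspace $M$, and every $\tau\in L$?

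The gap is not cosmetic. In type $A_{n-1}$ one can carry out your peeling by hand (inside each block of a maximal null partition coarsening the cycle partition, push all entries onto one representative per cycle, using integer vectors that sum to zero on each cycle), but that construction exploits the coincidences that root subspaces of $\Phi_n$ are exactly the $M$-spaces of partitions and that $L(\Phi_n^\vee)=V\cap\Z^n$; for the other infinite families and the exceptional types the interaction between the coroot lattice and the poset of root subspaces is more delicate, and you give no argument, uniform or case-by-case. This is exactly the point at which the paper appeals to the nontrivial Hurwitz-orbit theorem of Lewis and Reiner \cite[Corollary~1.4]{LewisReiner-16}: the paper takes a reduced reflection factorization of $w$, projects it to $W_0$, Hurwitz-rearranges it so that the first $2d$ projected reflections occur in equal pairs, lifts the Hurwitz moves back to $W$ (the Hurwitz action commutes with $p$), and observes that two reflections of $W$ with equal projections reflect across parallel hyperplanes, so each pair multiplies to a translation. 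That pairing is what manufactures the lattice point your sketch needs. To complete your proposal you would have to prove your compatibility lemma outright, which would amount to a new, independent proof of the theorem rather than a gap-free outline of the existing one.
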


The proof of Theorem~\ref{main:fact} relies on a nontrivial technical
result recently established by Vic Reiner and the first author
\cite[Corollary~1.4]{LewisReiner-16}. Elements in an affine Coxeter
group typically have many different potential translation-elliptic
factorizations and the most common way to find one is to view the
group as a semidirect product.  For an affine Coxeter group $W$ that
naturally acts cocompactly on an $n$-dimensional euclidean space, the
set of translations in $W$ forms a normal abelian subgroup $T$
isomorphic to $\Z^n$, and the quotient $W_0 = W/T$ is the spherical
Coxeter group associated with $W$.  An identification of $W$ as a
semidirect product $T \rtimes W_0$ corresponds to a choice of an
inclusion map $i\colon W_0 \into W$ that is a section of the
projection map $p\colon W \onto W_0$.  There is a unique point $x$
fixed by the subgroup $i(W_0)$ and every element has a unique
factorization $w = t_\lambda u$ where $t_\lambda$ is a translation in
$T$ and $u$ is an elliptic element in the copy of $W_0$ that fixes
$x$.  However, for some elements $w$ none of the
translation-elliptic factorizations that come from an identification
of $W$ as a semidirect product satisfy Theorem~\ref{main:fact} -- see
Example~\ref{ex:insuff}.

In the particular case of the symmetric group $\Sym_n$ (the spherical
Coxeter group of type A), reflection length also has a natural
combinatorial characterization: $\lr(w) = n - c(w)$, where $c(w)$ is
the number of \emph{cycles} of the permutation $w$ \cite{Denes59}.
When $W$ is the affine symmetric group $\affS_n$, we show that the
differential and elliptic dimensions of an element can also be given a
combinatorial interpretation, leading to the very similar formula in
Theorem \ref{thm:lr-affS}.

\subsection*{Structure}
The article has five sections. In Section \ref{sec:dimensions}, we
discuss the necessary background on spherical and affine Coxeter
groups, including the definition of the dimension of an element. In
Section \ref{sec:mains}, we use the ideas of Section
\ref{sec:dimensions} to prove our main theorems. In Section
\ref{sec:gen-fns}, we discuss our understanding of the local
distribution of reflection length. In Section \ref{sec:aff-sym}, we
restrict our attention to the affine symmetric groups and develop a
combinatorial understanding of reflection length. In Section
\ref{sec:future}, we point to topics for further study. Finally, in
Appendix \ref{sec:null-comp}, we describe an algorithm for computing
reflection length in the affine symmetric group.

\section{Dimensions in affine Coxeter groups}\label{sec:dimensions}
In this section we recall the relevant background on reflection length
and basic notions for spherical and affine Coxeter groups. We then
develop a notion of the dimension of an element in an affine Coxeter
group. We assume the reader is familiar with Coxeter groups at the
level of Humphreys \cite{Humphreys-90}.

\subsection{Reflection length}\label{sec:lr}

This section reviews basic facts about reflection length in arbitrary
Coxeter groups.  

\begin{defn}[Reflection length]\label{def:lr}
  Given a Coxeter group $W$ with standard Coxeter
  generating set $S$, a \emph{reflection} is any element conjugate
  to an element in $S$.  Let $R$ denote the set of all reflections in
  $W$; note that $R$ is infinite whenever the
  Coxeter group $W$ is infinite.  The \emph{reflection length} $\lr(w)$
  of an element $w$ is the minimum integer $k$ such that
  there exist reflections $r_i \in R$ with $w = r_1 r_2 \cdots r_k$.
  The identity has reflection length $0$.
\end{defn}

\begin{rem}[Prior results]\label{rem:prior}
  When $W$ is a spherical Coxeter group, Carter's lemma mentioned in
  the introduction establishes that the reflection length of an
  element $w \in W$ may be computed in terms of the dimension of its
  move-set or fixed space (see Section~\ref{sec:finite Coxeter groups}
  below for the definitions): $\lr(w) = \dim (\mov(w)) = \dim(V) -
  \dim (\fix(w))$. When $W$ is an affine Coxeter group acting
  cocompactly on an $n$-dimensional euclidean space, the second and
  third authors have shown that $\lr(w)$ is bounded above by $2n$ for
  every element $w$ in $W$, and established an exact formula for the
  reflection length of a translation \cite[Theorem~A and
    Proposition~4.3]{McPe-11}.  Mili\'{c}evi\'{c}, Thomas and the
  fourth author found better bounds and explicit formulas in some
  special cases as part of their work on affine Deligne--Lusztig
  varieties \cite[Theorem~11.3 and
    Corollary~11.5]{MST-affine-deligne-lusztig}.  Finally, when $W$ is
  neither spherical nor affine, Duszenko has shown that the reflection
  length function is always unbounded \cite{Duszenko12}.
\end{rem}

\begin{rem}[Basic properties]\label{rem:lr-basic}
  If $W$ is a reducible Coxeter group $W = W_1 \times W_2$, the
  reflection length of an element $w \in W$ is just the sum of the
  reflection lengths of its factors \cite[Proposition~1.2]{McPe-11}.
  Thus, it is sufficient to study \emph{irreducible} Coxeter groups,
  for which there is a well-known classification \cite[Section
    6.1]{Humphreys-90}.  Since the set $R$ of reflections is a union
  of conjugacy classes, the length function $\lr \colon W \to \Z_{\geq
    0}$ is constant on conjugacy classes.  This implies, in
  particular, that $\lr(uv) = \lr(vu)$.  The function $\lr$ can also
  be viewed as the natural combinatorial distance function on the
  Cayley graph of $W$ with respect to the generating set $R$, which
  means that $\lr$ satisfies the triangle inequality $\lr(u)-\lr(v)
  \leq \lr(uv) \leq \lr(u)+\lr(v)$.  Finally, for every Coxeter group
  there is a homomorphism onto $\Z/2\Z$ that sends every reflection to
  the non-identity element.  As a consequence, reflection length has a
  parity restriction: $\lr(uv) = \lr(u) + \lr(v) \mod 2$.  These facts
  combine to show that $\lr(rw)=\lr(wr)=\lr(w)\pm 1$ for every element
  $w \in W$ and reflection $r\in R$.
\end{rem}

One standard result about reflection factorizations is that they can
be rewritten in many different ways.

\begin{lemma}[Rewriting reflection factorizations]\label{lem:hurwitz}
  Let $w = r_1 r_2 \cdots r_k$ be a reflection factorization of an
  element $w$ of a Coxeter group.  For any selection $1 \leq i_1 < i_2
  < \cdots < i_m \leq k$ of positions, there is a length-$k$
  reflection factorization of $w$ whose first $m$ reflections are
  $r_{i_1} r_{i_2} \cdots r_{i_m}$ and another length-$k$ reflection
  factorization of $w$ where these are the last $m$ reflections.
\end{lemma}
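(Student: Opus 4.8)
The plan is to use the \emph{Hurwitz moves} on reflection factorizations, which rewrite two adjacent factors while preserving both their product and the total length~$k$. Given any factorization $w = r_1 r_2 \cdots r_k$ and any position $j$, replace the adjacent pair $(r_j, r_{j+1})$ by either $(r_{j+1},\, r_{j+1} r_j r_{j+1})$ or $(r_j r_{j+1} r_j,\, r_j)$. In each case the product of the two factors is unchanged (using that a reflection is its own inverse), so the new $k$-tuple is again a factorization of $w$; and since the set $R$ of reflections is closed under conjugation, being a union of conjugacy classes as recalled in Remark~\ref{rem:lr-basic}, each new factor is again a reflection. Thus both operations carry length-$k$ reflection factorizations of $w$ to length-$k$ reflection factorizations of $w$.

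First I would record a ``single-slide'' observation about the effect of these moves. The first move transports the reflection $r_{j+1}$ one position to the left \emph{without changing it}, replacing the factor formerly at position $j$ by a conjugate of it; symmetrically, the second move transports $r_j$ one position to the right without changing it. Iterating the first move, any chosen factor $r_i$ can be slid all the way to the front: after $i-1$ applications (at the successive position-pairs $(i-1,i), (i-2,i-1), \dots, (1,2)$) it occupies position~$1$, still equal to $r_i$, while the factors originally at positions $1, \dots, i-1$ have each been conjugated and pushed back by one slot. The key point for the bookkeeping is that this entire process touches only positions $\leq i$, so every factor currently at a position exceeding $i$ is left untouched.

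For the first assertion of the lemma I would then argue by induction on~$m$. Suppose $r_{i_1}, \dots, r_{i_{s-1}}$ have already been brought, unchanged, to positions $1, \dots, s-1$ using only slides among positions $\leq i_{s-1}$. Because $i_s > i_{s-1}$, the factor $r_{i_s}$ still sits, untouched, at its original position $i_s$. Sliding it leftward as above until it reaches position~$s$ involves only the positions $s, s+1, \dots, i_s$, so it fixes positions $1, \dots, s-1$ and delivers $r_{i_s}$, unchanged, into position~$s$. After $m$ steps the first $m$ factors are exactly $r_{i_1} r_{i_2} \cdots r_{i_m}$, while the factorization still has length~$k$ and still multiplies to~$w$. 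The second assertion follows by the mirror-image argument: using the rightward slide, one brings $r_{i_m}, r_{i_{m-1}}, \dots, r_{i_1}$ in turn to positions $k, k-1, \dots, k-m+1$, placing them as the last $m$ factors in the prescribed order.

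The argument is entirely elementary, so I do not expect a genuine obstacle; the only point requiring care is precisely the bookkeeping of which factor is preserved and which is conjugated under each elementary move, together with the observation that each slide confines its effect to a contiguous block of positions. This last fact is what guarantees that previously placed factors are never disturbed, and it is what makes the induction go through cleanly.
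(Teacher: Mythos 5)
Your proof is correct and takes essentially the same approach as the paper's: both use the two Hurwitz moves (sliding one reflection past its neighbor unchanged while conjugating the other) and iterate them to bring the selected reflections to the front or to the back. The only difference is that you spell out explicitly the induction and the bookkeeping of which positions each slide can disturb, which the paper compresses into the single sentence that iterating the moves ``allows one to move any subset of reflections into the desired positions.''
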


\begin{proof}
  Because the set of reflections is closed under conjugation, we have
  for any reflections $r$ and $r'$ that the elements $r'' = r r' r$
  and $r''' = r' r r'$ are also reflections, satisfying $r r' = r'' r
  = r' r'''$.  Thus in any reflection factorization one may replace a
  consecutive pair $rr'$ of factors with the pair $r''r$ (moving $r$
  to the right one position) or $r'r'''$ (moving $r'$ to the left one
  position) without changing the length of the factorization.
  Iterating these rewriting operations allows one to move any subset
  of reflections into the desired positions.
\end{proof}

\begin{rem}[Hurwitz action]
  The individual moves in the proof of Lemma~\ref{lem:hurwitz} are
  called \emph{Hurwitz moves}.  Globally, they correspond to an
  action, called the \emph{Hurwitz action}, of the $k$-strand braid
  group on the set of all length-$k$ reflection factorizations of a
  given word $w$.
\end{rem}

\subsection{Spherical Coxeter groups}\label{sec:finite Coxeter groups}

In this section, we discuss the spherical Coxeter groups and their
connection to root systems.

\begin{defn}[Spherical Coxeter groups]\label{def:spherical coxeter group}
  A \emph{euclidean vector space} $V$ is an $n$-dimensional real
  vector space equipped with a positive definite inner product
  $\langle \cdot, \cdot \rangle$.  A \emph{crystallographic root
    system $\Phi$} is a finite collection of vectors that span a real
  euclidean vector space $V$ satisfying a few elementary properties --
  see \cite{Humphreys-90} for a precise definition. (While there are
  non-crystallographic root systems as well, it is only the
  crystallographic root systems that arise in the study of affine
  Coxeter groups.) The elements of $\Phi$ are called \emph{roots}.
  Each crystallographic root system corresponds to a finite (or
  \emph{spherical}) Coxeter group $W_0$, as follows: for each $\alpha$
  in $\Phi$, $H_\alpha$ is the hyperplane through the origin in $V$
  orthogonal to $\alpha$, and the unique nontrivial isometry of $V$
  that fixes $H_{\alpha}$ pointwise is a \emph{reflection} that we
  call $r_{\alpha}$.  The collection $R = \{r_{\alpha} \mid \alpha \in
  \Phi\}$ generates the spherical Coxeter group $W_0$, and $R$ is its
  set of reflections in the sense of Definition~\ref{def:lr}. In
  Figure \ref{fig:hyperplanes-finite} we see the hyperplanes for each
  irreducible crystallographic root system of rank two.
\end{defn}

\begin{figure}
  \begin{tabular}{c c c}
    \begin{tikzpicture}[cm={1,0,.5,.8660254,(0,0)}, >=stealth,baseline=2.6cm]
      \draw[very thick] (0,3)--(4,3);
      \draw[very thick] (2,1)--(2,5);
      \draw[very thick] (0,5)--(4,1);
    \end{tikzpicture}
    &
    \begin{tikzpicture}[>=stealth,baseline=0,scale=.85]
      \draw[very thick] (-2,2)--(2,-2);
      \draw[very thick] (-2,-2)--(2,2);
      \draw[very thick] (0,-2)--(0,2);
      \draw[very thick] (-2,0)--(2,0);
    \end{tikzpicture}
    &
    \begin{tikzpicture}[cm={1,0,.5,.8660254,(0,0)}, >=stealth,baseline=2.6cm]
      \draw[very thick] (0,3)--(4,3);
      \draw[very thick] (2,1)--(2,5);
      \draw[very thick] (0,5)--(4,1);
      \draw[very thick] (1,2)--(3,4);
      \draw[very thick] (0,4)--(4,2);
      \draw[very thick] (1,5)--(3,1);
    \end{tikzpicture}
    \\
    (a) & (b) & (c)
  \end{tabular}
  \caption{ Reflecting hyperplanes associated to the root systems of
    types (a) $A_2$, (b) $B_2$ and $C_2$, and (c) $G_2$.}
  \label{fig:hyperplanes-finite}
\end{figure}
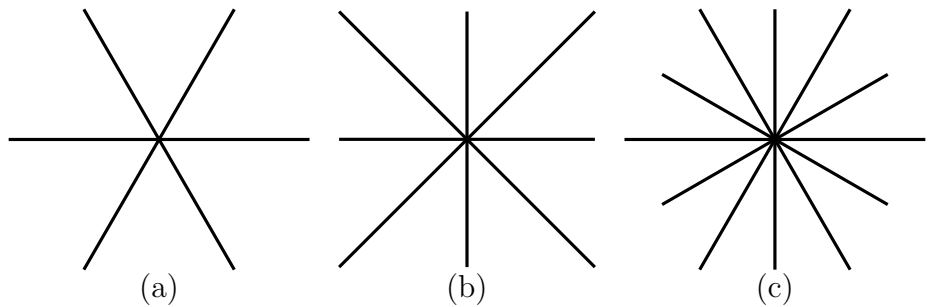

In a spherical Coxeter group constructed as above, each element is
associated to two fundamental subspaces, its fixed space and its
move-set.

\begin{defn}[Fixed space and move-set]\label{def:linear-fixed-space}
  Given an orthogonal transformation $w$ of $V$, its \emph{fixed
    space} $\fix(w)$ is the set of vectors $\lambda \in V$ such that
  $w(\lambda) = \lambda$.  In other words, it is the kernel $\Ker(w -
  1)$.  Its \emph{move-set} $\mov(w)$ is the set of vectors $\mu$ in
  $V$ for which there exists some $\lambda$ in $V$ such that
  $w(\lambda) = \lambda + \mu$.  In other words, it is the image
  $\im(w - 1)$.
\end{defn}

\begin{remark}[Orthogonal transformations]\label{rem:orthogonal complement}
  Every element in a spherical Coxeter group, constructed from a root
  system as above, is an orthogonal transformation, and so the
  move-set and fixed space are orthogonal complements with respect to
  $V$.
\end{remark}

\subsection{Points, vectors, and affine Coxeter groups}\label{sec:pt-vec}
  
In the same way that doing linear algebra using fixed coordinate
systems can obscure an underlying geometric elegance, working in
affine Coxeter groups with a predetermined origin can have an
obfuscating effect.  One way to avoid making such a choice is to
distinguish between \emph{points} and \emph{vectors}, as in
\cite{SnapperTroyer-89} and \cite{BradyMcCammond-15}.

\begin{defn}[Points and vectors]\label{def:pt-vec}
  Let $V$ be a euclidean vector space.  A \emph{euclidean space} is a
  set $E$ with a uniquely transitive $V$-action, i.e., for every
  ordered pair of points $x,y \in E$ there exists a unique vector
  $\lambda \in V$ such that the $\lambda$ sends $x$ to $y$.  When this
  happens we write $x + \lambda = y$.  The preceding sentences
  illustrate two conventions that we adhere to throughout the paper:
  the elements of $E$ are \emph{points} and are denoted by Roman
  letters, such as $x$ and $y$, while the elements of $V$ are
  \emph{vectors} and are denoted by Greek letters, such as $\lambda$
  and $\mu$.
\end{defn}

The main difference between $E$ and $V$ is that $V$ has a well-defined
origin, but $E$ does not.  If we select a point $x \in E$ to serve as
the origin, then $V$ and $E$ can be identified by sending each vector
$\lambda$ to the point $x+\lambda$.  We use this identification in the
construction of the affine Coxeter groups.

\begin{defn}[Affine Coxeter groups]\label{def:affine}
  Let $E$ be a euclidean space, whose associated vector space $V$
  contains the crystallographic root system $\Phi$.  An affine Coxeter
  group $W$ can be constructed from $\Phi$, as follows.  Fix a point
  $x$ in $E$, to temporarily identify $V$ and $E$.  For each $\alpha
  \in \Phi$ and $j\in \Z$, let $H_{\alpha,j}$ denote the (affine)
  \emph{hyperplane} in $E$ of solutions to the equation $\langle
  v,\alpha \rangle = j$, where the brackets denote the standard inner
  product (treating $x$ as the origin).  The unique nontrivial
  isometry of $E$ that fixes $H_{\alpha,j}$ pointwise is a
  \emph{reflection} that we call $r_{\alpha,j}$.  The collection $R =
  \{r_{\alpha,j} \mid \alpha \in \Phi, j \in \Z\}$ generates the
  affine Coxeter group $W$ and $R$ is its set of reflections in the
  sense of Definition~\ref{def:lr}.  A standard minimal generating set
  $S$ can be obtained by restricting to those reflections that reflect
  across the facets of a certain polytope in $E$. The irreducible
  affine hyperplane arrangements of rank $2$ are shown in Figure
  \ref{fig:hyperplanes-infinite}.
\end{defn}

\begin{figure}[!h]
  \begin{tabular}{c c}
    (a) &
    \begin{tikzpicture}[cm={1,0,.5,.8660254,(0,0)}, >=stealth,baseline=2.6cm, scale=.8]
      \foreach \x in {1,...,6}{
        \draw (\x-1,.8)--(\x-1,6.2);
      }
      \draw (-1,2.8)--(-1,6.2);
      \draw (-2,4.8)--(-1-1,6.2);
      \draw (6,.8)--(6,4.2);
      \draw (7,.8)--(7,2.2);
      \foreach \x in {2,...,6}{
        \draw (1.2+\x,.8)--(-4.2+\x,6.2);
      }
      \draw (2.2,.8)--(-2.2,5.2);
      \draw (1.2,.8)--(-1.2,3.2);
      \draw (.2,.8)--(-.2,1.2);
      \draw (7.2,1.8)--(2.8,6.2);
      \draw (6.2,3.8)--(3.8,6.2);
      \foreach \x in {1,...,6}{
        \draw (.2-.5*\x,\x)--(8.2-.5*\x,\x);
      }
    \end{tikzpicture}
    \\
    \\
    (b) & 
    \begin{tikzpicture}[>=stealth,baseline=2.5cm,scale=.68]
      \foreach \x in {1,...,10}{
        \draw (\x,.8)--(\x,6.2);
      }
      \foreach \x in {1,...,6}{
        \draw (.8,\x)--(10.2,\x);
      }
      \foreach \x in {1,...,3}{
        \draw (.8,2*\x+.2)--(2*\x+.2,.8);
        \draw (.8,2*\x-.2)--(7.2-2*\x,6.2);
        \draw (2*\x+3.8,.8)--(10.2,7.2-2*\x);
      }
      \foreach \x in {1,...,2}{
        \draw (2*\x+.8,6.2)--(2*\x+6.2,.8);
        \draw (2*\x+4.8,6.2)--(10.2,2*\x+.8);
        \draw (2*\x+5.2,6.2)--(2*\x-.2,.8);
      }
    \end{tikzpicture}
    \\
    \\
    (c)
    &
    \begin{tikzpicture}[cm={1,0,.5,.8660254,(0,0)}, >=stealth,baseline=2.6cm,scale=.8]
      \foreach \x in {1,...,6}{
        \draw (\x-1,.8)--(\x-1,6.2);
      }
      \draw (-1,2.8)--(-1,6.2);
      \draw (-2,4.8)--(-1-1,6.2);
      \draw (6,.8)--(6,4.2);
      \draw (7,.8)--(7,2.2);
      \foreach \x in {2,...,6}{
        \draw (1.2+\x,.8)--(-4.2+\x,6.2);
      }
      \draw (2.2,.8)--(-2.2,5.2);
      \draw (1.2,.8)--(-1.2,3.2);
      \draw (.2,.8)--(-.2,1.2);
      \draw (7.2,1.8)--(2.8,6.2);
      \draw (6.2,3.8)--(3.8,6.2);
      \foreach \x in {1,...,6}{
        \draw (.2-.5*\x,\x)--(8.2-.5*\x,\x);
      }
      \draw (.8,.8)--(5.5,5.5);
      \draw (3.2,6.2)--(-.8,2.2);
      \draw (0.2,6.2)--(-1.8,4.2);
      \draw (3.8,.8)--(6.5,3.5);
      \draw (6.8,.8)--(7.5,1.5);
      \draw (-2.6,5.8)--(7.4,.8);
      \draw (-1.6,3.8)--(4.4,.8);
      \draw (-.6,1.8)--(1.4,.8);
      \draw (-.4,6.2)--(7,2.5);
      \draw (2.6,6.2)--(6,4.5);
      \draw (1.1,.8)--(-1.6,6.2);
      \draw (2.6,.8)--(-.1,6.2);
      \draw (4.1,.8)--(1.4,6.2);
      \draw (5.6,.8)--(2.9,6.2);
      \draw (7.1,.8)--(4.4,6.2);
    \end{tikzpicture}
  \end{tabular}
  \caption{ Affine hyperplanes for the root system of type (a) $A_2$,
    (b) $B_2$, and (c) $G_2$.}
  \label{fig:hyperplanes-infinite}
\end{figure}
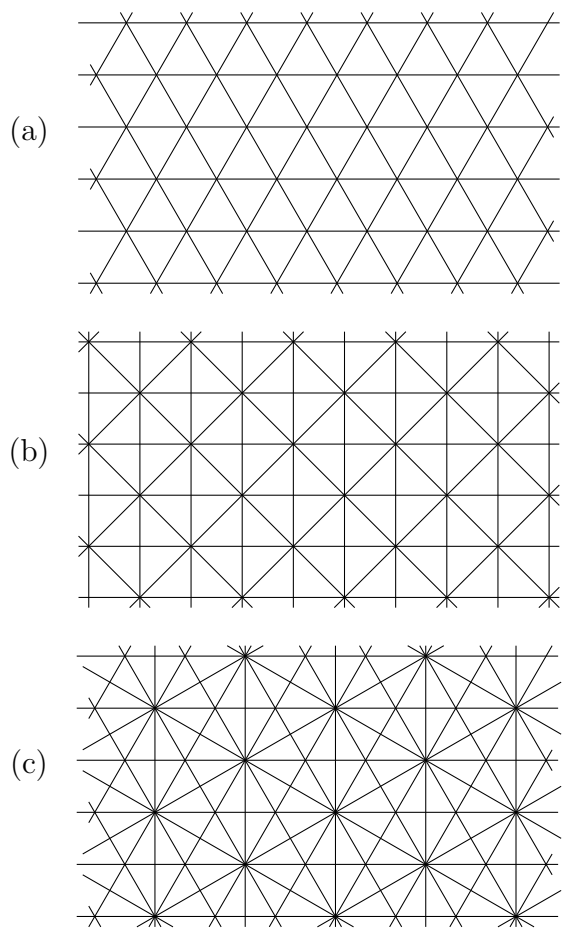

The affine Coxeter group $W$ associated to a finite crystallographic
root system $\Phi$ is closely related to the spherical Coxeter group
$W_0$.

\begin{defn}[Subgroups and quotients]\label{def:sub-quo}
  Let $W$ be an affine Coxeter group constructed as in
  Definition~\ref{def:affine}.  The map sending $r_{\alpha,j}$ in $W$
  to $r_\alpha$ in $W_0$ extends to a group homomorphism $p\colon W
  \onto W_0$.  The kernel of $p$ is a normal abelian subgroup $T$,
  isomorphic to $\Z^n$, whose elements are called \emph{translations},
  and $W_0 \cong W/T$.  When $x$ is the point used in the construction
  of $W$, the map $i\colon W_0 \into W$ sending $r_\alpha$ to
  $r_{\alpha, 0}$ is a section of the projection $p$, identifying
  $W_0$ with the subgroup of all elements of $W$ that fix $x$. (Note
  also the composition $i\circ p$ sends reflection $r_{\alpha,j}$ to
  $r_{\alpha}$ to $r_{\alpha,0}$.) Thus $W$ may be identified as a
  semidirect product $W \cong T \rtimes W_0$.
\end{defn}
  
Of course, the identification of $W_0$ with a subgroup of $W$ is not
unique: conjugation by elements of $T$ gives an infinite family of
such subgroups.

\begin{figure}[h]
  \begin{tikzpicture}[>=stealth]
    \draw (0,0) node{
      \begin{tikzpicture}[baseline=2.5cm,scale=.8]
        \draw[draw=none,fill=white!80!blue] (3,4)--(4,4)--(4,3)--(3,4);
        \foreach \x in {1,...,5}{
          \draw (\x,1.8)--(\x,6.2);
          \draw (.8,\x+1)--(5.2,\x+1);
        }
        \foreach \x in {1,...,3}{
          \draw (.8,2*\x+.2)--(2*\x-.8,1.8);
          \draw (.8,2*\x-.2)--(7.2-2*\x,6.2);
        }
        \foreach \x in {1,...,2}{
          \draw (2*\x+.8,6.2)--(5.2,2*\x+1.8);
          \draw (5.2,6.2-2*\x)--(2*\x+.8,1.8);
        }
        \draw[very thick] (.5,6.5)--(5.5,1.5);
        \draw[very thick] (0,4)--(6,4);
        \draw (5,4) node[circle,inner sep=2, fill=black] {};
        \draw (5,6) node[circle,inner sep=2, fill=black] {};
        \draw (5,2) node[circle,inner sep=2, fill=black] {};
        \draw (3,6) node[circle,inner sep=2, fill=black] {};
        \draw (3,4) node[circle,inner sep=2, fill=black] {};
        \draw (3,2) node[circle,inner sep=2, fill=black] {};
        \draw (1,2) node[circle,inner sep=2, fill=black] {};
        \draw (1,4) node[circle,inner sep=2, fill=black] {};
        \draw (1,6) node[circle,inner sep=2, fill=black] {};
      \end{tikzpicture}
    };
    \draw (0,0) circle (4cm);
    \foreach \x in {1,...,8}{
      \draw (45*\x:4cm) node[circle,inner sep=2, fill=black] {}; 
      \draw[dashed] (45*\x:3.25cm)--(45*\x:4.5cm);
    }
    \draw[dashed,->] (.8,-.25) node[inner sep=1,left] {$A$} --
    (-20:4.5cm) node[below right,inner sep=1] {$p(A)=C$}; 
  \end{tikzpicture}
  \caption{Elements of $W$ can be put in bijection with alcoves in
    tessellated plane and the orbit of $x$ under translations in $W$
    is illustrated with the fat vertices of these alcoves.  Elements
    of $W_0$ are in bijection with chambers on the boundary sphere.
    The projection map $p$ maps an alcove $A$ to the chamber $C$ at
    infinity that points ``in the same direction''. For details see
    Remark~\ref{rem:alcoves}. }\label{fig:projection}
\end{figure}
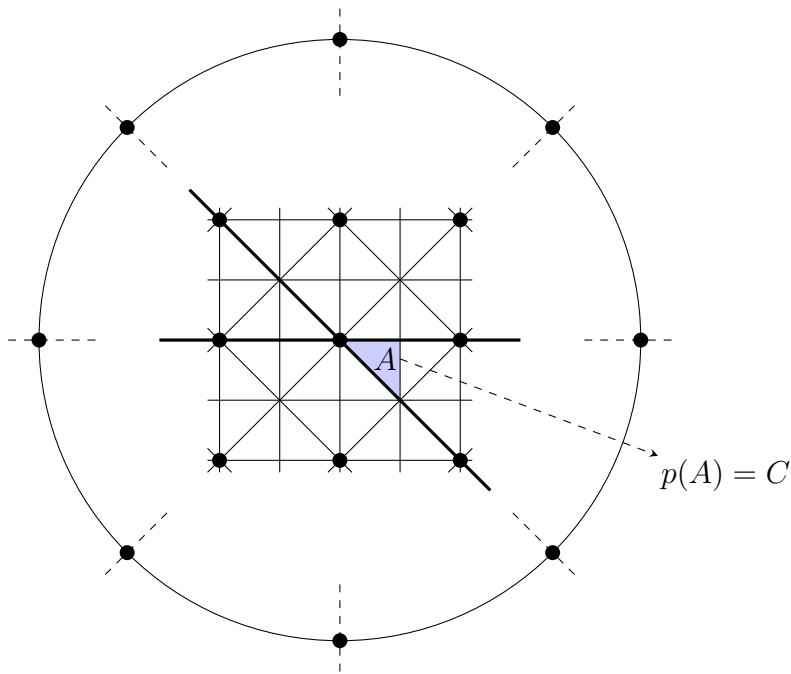

\begin{remark}[Geometric interpretation of $W$, $W_0$ and $p$]\label{rem:alcoves}
  Given $W$, an \emph{alcove} is (the closure of) a maximal
  connected component in the complement of the reflection hyperplanes
  $H_{\alpha,j}$ in the euclidean space $E$. These are the small
  triangles in Figures \ref{fig:hyperplanes-infinite} and
  \ref{fig:projection}. Each of these triangles can play the role of a
  fundamental domain of the action of $W$ on $E$.
  The choice of a point $x$ as origin and a fundamental alcove $c_x$
  having $x$ as a vertex determines a bijection between elements of
  $W$ and alcoves. Here the fundamental alcove corresponds to the
  identity and every element $w\in W$ corresponds to the image of
  $c_x$ under $w$.

  The boundary of $E$ is a sphere $S$ whose points are the parallelism
  classes of geodesic rays. Its simplicial structure is induced by the
  tessellation of the plane. A parallelism class of hyperplanes in $E$
  corresponds to a reflection hyperplane, i.e., an equator, in the
  sphere $S$.  We call the maximal connected components in the
  complement of the hyperplanes in $S$ \emph{chambers}.  Each chamber
  is also the parallelism class of simplicial cones in $E$. Namely, for
  any one of the black vertices in Figure \ref{fig:projection}, 
  say $y$, the complement of all the
  hyperplanes going through $y$ decomposes into simplicial cones. We
  call them \emph{Weyl cones} based at $y$. The parallelism classes of
  Weyl cones are in bijection with the chambers in $S$.  Moreover, the
  chambers in $S$ are in a natural bijection with elements of $W_0$, as
  follows: Each element $w\in W$ is an affine motion of $E$ that
  obviously maps parallel rays to parallel rays. Hence it induces an
  isometry of (the tessellation of) $S$ where translations on $E$
  induce the identity on $S$. If we want the bijection between $W_0$
  and the chambers in $S$ to be compatible with these induced
  isometries of $W$ on $S$ we need to map the identity in $W_0$ to the
  parallelism class of the unique Weyl cone based at $x$ that contains
  the alcove $c_x$.

  From this perspective, the projection map $p: W \onto W_0$ can be
  understood geometrically as the map that sends an element $w\in W$
  to the induced isometry on $S$. In geometric terms this means that
  an alcove $A$ with vertex $y$ is mapped to the chamber $C$ at
  infinity that is the parallelism class of the cone based at $y$ that
  contains $A$. This is indicated by the dotted arrow in
  Figure~\ref{fig:projection}. One can think of this as ``walking to
  infinity in the direction of $A$ at $y$''.  More about the interplay
  between faces of alcoves and faces of chambers is studied in work of
  Marcelo Aguiar and the third author \cite{AguiarPetersen-15}.
\end{remark}

The notions of fixed space and move-set extend easily to affine
Coxeter groups.

\begin{defn}[Move-sets]\label{def:move-sets}
  The \emph{motion} of a point $x \in E$ under a euclidean isometry
  $w$ is the vector $\lambda \in V$ such that $w(x) = x + \lambda$.
  The \emph{move-set} of $w$ is the collection of all motions of the
  points in $E$; by \cite[Proposition~3.2]{BradyMcCammond-15}, it is
  an affine subspace of $V$.
  In symbols, $\mov(w) = \{ \lambda \mid w(x) = x+\lambda
  \textrm{ for some } x \in E \} \subset V$.
\end{defn}

\begin{defn}[Fixed space]\label{def:fixed-space}
  The \emph{fixed space} $\fix(w)$ of an isometry $w$ is the subset of
  points $x \in E$ such that $w(x) = x$.  Equivalently, $\fix(w)$
  consists of all points whose motion under $w$ is the vector $0$.
  When $\fix(w)$ is nonempty, it is an (affine) subspace of $E$.
\end{defn}

The complementarity between $\mov(w)$ and $\fix(w)$ mentioned in
Remark~\ref{rem:orthogonal complement} does not hold for isometries of
euclidean space.  It can, however, be recovered if the fixed space is
replaced with the min-set of points that are moved a minimal distance
under $w$.  Then the space of directions for the move-set and the
space of directions for the min-set give an orthogonal decomposition
of $V$ -- see \cite[Lemma 3.6]{BradyMcCammond-15}.

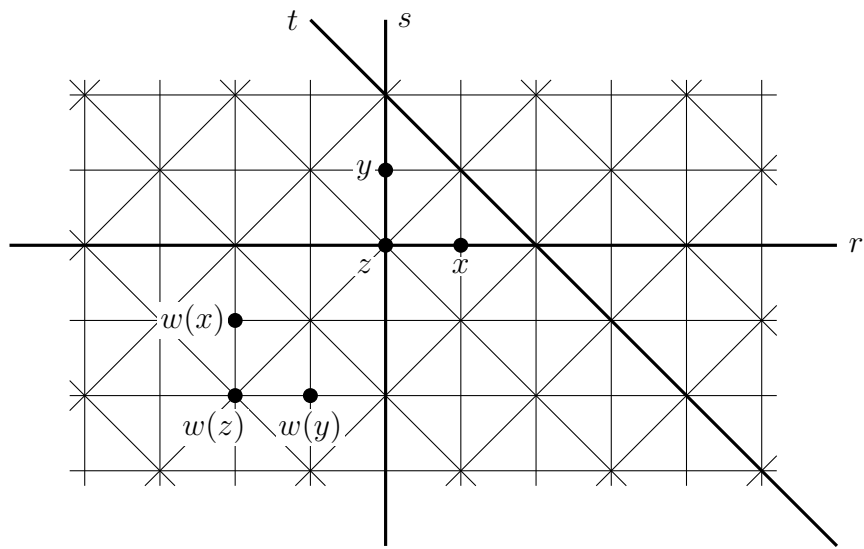
\begin{figure}
  \begin{tikzpicture}[>=stealth,baseline=2.5cm,scale=1]
    \foreach \x in {1,...,10}{
      \draw (\x,.8)--(\x,6.2);
    }
    \foreach \x in {1,...,6}{
      \draw (.8,\x)--(10.2,\x);
    }
    \foreach \x in {1,...,3}{
      \draw (.8,2*\x+.2)--(2*\x+.2,.8);
      \draw (.8,2*\x-.2)--(7.2-2*\x,6.2);
      \draw (2*\x+3.8,.8)--(10.2,7.2-2*\x);
    }
    \foreach \x in {1,...,2}{
      \draw (2*\x+.8,6.2)--(2*\x+6.2,.8);
      \draw (2*\x+4.8,6.2)--(10.2,2*\x+.8);
      \draw (2*\x+5.2,6.2)--(2*\x-.2,.8);
    }
    \draw[very thick] (5,0)--(5,7) node[right] {$s$};
    \draw[very thick] (0,4)--(11,4) node[right] {$r$};
    \draw[very thick] (4,7) node[left] {$t$}--(11,0);
    \draw (5,4) node[circle,inner sep=2, fill=black] {} node[xshift=-8pt,yshift=-8pt,inner sep=1,fill=white] {$z$};
    \draw (5,5) node[circle,inner sep=2, fill=black] {} node[xshift=-8pt,inner sep=1,fill=white] {$y$};
    \draw (6,4) node[circle,inner sep=2, fill=black] {} node[yshift=-8pt,inner sep=1,fill=white] {$x$};
    \draw (3,2) node[circle,inner sep=2, fill=black] {} node[xshift=-8pt,yshift=-12pt,inner sep=1,fill=white] {$w(z)$};
    \draw (4,2) node[circle,inner sep=2, fill=black] {} node[yshift=-12pt,inner sep=1,fill=white] {$w(y)$};
    \draw (3,3) node[circle,inner sep=2, fill=black] {} node[xshift=-16pt,inner sep=1,fill=white] {$w(x)$};
  \end{tikzpicture}
  \caption{The action of $w = rst$ in the group of affine type $B_2$
    on the points $x, y, z$ in the euclidean plane, as in
    Example~\ref{ex:B_2}.}\label{fig:B2example}
\end{figure}

\begin{example}[The move-set of an element in affine $B_2$]\label{ex:B_2}
  To make the notion of a move-set concrete, we describe the move-set
  of an element in affine $B_2$. Suppose $w = rst$, where $r$, $s$,
  and $t$ are reflections across the lines pictured in Figure
  \ref{fig:B2example}, and suppose $x, y,$ and $z$ are the points
  labeled. We have also labeled the locations of $w(x)$, $w(y)$ and
  $w(z)$.
  
  The standard basis vectors $\varepsilon_1$ and $\varepsilon_2$ in
  $\R^2$ are the vectors that send $z$ to $x$ and $z$ to $y$,
  respectively. By abuse of notation, we will write $\varepsilon_1 =
  x-z$ and $\varepsilon_2 = y-z$. Having made this identification, we
  can express any point $u$ in $E\cong \R^2$ as 
  \[ 
  u = z + a(x-z) + b(y-z) 
  \] 
  for some $(a,b)\in \R^2$. Using this coordinate system, we have
  \begin{align*}
    w(x) &= z-2(x-z)-(y-z) = x-3(x-z)-(y-z),\\
    w(y) &=z-(x-z)-2(y-z) =y -(x-z)-3(y-z), \quad \mbox{and}\\
    w(z) &=z-2(x-z)-2(y-z).
  \end{align*}
  We see that the motion of $x$ is the vector $(-3,-1)$, the motion of
  $y$ is $(-1,-3)$, and the motion of $z$ is $(-2,-2)$.

  In general, we use linearity to compute
  \begin{align*}
    w(u) &= w(z) +a w(x-z) + bw(y-z),\\
    &=w(z) +a(w(x)-w(z)) + b(w(y)-w(z)),\\
    &=z-2(x-z)-2(y-z) + a(y-z) + b(x-z),\\
    &=u+(b-a-2)(x-z) + (a-b-2)(y-z).
  \end{align*}
  Thus, a generic motion is $\lambda = (-2,-2) + (b-a)\cdot (1,-1)$,
  and the move-set of this element $w$ is an affine line:
  \[
  \mov(w) = (-2,-2)+\R(1,-1).
  \]
\end{example}

Move-sets for general euclidean plane isometries are given in
Example~\ref{ex:euc-plane}.

\subsection{Elliptics and translations}\label{sec:elliptic}

In this section, we record some basic facts about special kinds of
elements in an affine Coxeter group.

\begin{defn}[Roots and reflections]\label{def:roots}
  For a reflection $r$ whose fixed space is the hyperplane $H$,
  the motion of any point under $r$ is in a direction orthogonal to $H$,
  and $\mov(r)$ is the line through the origin in $V$ in this direction.
  A vector $\alpha \in V$ is called a
  \emph{root} of $r$ if $\mov(r) = \spn(\alpha) = \R \alpha$.
  For affine Coxeter groups, the fixed spaces of the reflections come
  in finitely many parallel families.  Within each family, the fixed
  hyperplanes are equally spaced and the length of their common root
  $\alpha$ can be chosen to encode additional information such as the
  distance between adjacent parallel fixed hyperplanes.  Once the
  roots are normalized in this way, the result is 
  the usual crystallographic root system $\Phi$ inside $V$. The
  $\Z$-span of $\Phi$ in $V$ is called the \emph{root lattice}.
  It is an abelian group (under vector addition) 
  isomorphic to $\Z^n$ where $n = \dim(V)$.
\end{defn}

\begin{defn}[Elliptic elements and elliptic part]\label{def:ell-part}
  An element $w$ in an affine Coxeter group $W$ is called
  \emph{elliptic} if its fixed space is non-empty.  Equivalently,
  these are exactly the elements of $W$ of finite order.
  Given an arbitrary element $w \in W$, its \emph{elliptic part}
  $w_e = p(w)$ is its image under the projection $p: W \onto W_0$.
  (In particular, the elliptic part is an element of $W_0$, acting naturally
  on $V$ rather than on $E$.)
\end{defn}

One can characterize elliptic elements in a variety of ways.

\begin{lem}[Elliptic elements]\label{lem:ell}
  For a euclidean isometry $w$, the following are equivalent: (1) $w$
  is elliptic, (2) $\mov(w) \subset V$ is a linear subspace, and (3)
  $\mov(w)$ contains the origin.
\end{lem}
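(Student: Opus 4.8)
The plan is to prove the equivalence of the three conditions by showing $(1)\Rightarrow(3)\Rightarrow(2)\Rightarrow(1)$, working directly from the definitions of $\mov(w)$, $\fix(w)$, and elliptic element given above. The key observation is that the motion of a point $x$ under $w$ is $w(x)-x$, so after temporarily fixing an origin to identify $E$ with $V$, the map $x\mapsto w(x)-x$ is an \emph{affine} map from $E$ to $V$ whose image is exactly $\mov(w)$; affineness follows from the computation already illustrated in Example~\ref{ex:B_2}, where the generic motion was seen to be an affine function of the chosen coordinates.

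First I would prove $(1)\Rightarrow(3)$: if $w$ is elliptic then by definition there is a point $x$ with $w(x)=x$, so the motion of $x$ is the zero vector $0$, and hence $0\in\mov(w)$. Next, for $(3)\Rightarrow(2)$, I would use that $\mov(w)$ is an affine subspace of $V$ by Definition~\ref{def:move-sets} (citing \cite[Proposition~3.2]{BradyMcCammond-15}); an affine subspace that contains the origin is automatically a linear subspace, which is immediate. The only real content is $(2)\Rightarrow(1)$: suppose $\mov(w)$ is a linear subspace, so in particular $0\in\mov(w)$, meaning there exists some point $x$ with $w(x)=x+0=x$, i.e. $x\in\fix(w)$, so $\fix(w)$ is nonempty and $w$ is elliptic by definition.

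The plan as stated actually makes all three implications short, so the ``hard part'' is really just making sure the right structural fact about $\mov(w)$ is invoked cleanly. The crux is recognizing that the entire lemma hinges on the single equivalence ``$0\in\mov(w)$ iff $w$ fixes a point,'' which is essentially tautological once one unwinds the definition of motion, together with the already-cited fact that $\mov(w)$ is an affine subspace. I would therefore organize the argument so that the affine-subspace property does all the heavy lifting: it is what upgrades the trivial equivalence $(1)\Leftrightarrow(3)$ into the geometrically meaningful statement $(2)$, since the only affine subspaces containing $0$ are the linear ones. The main thing to be careful about is the points-versus-vectors distinction emphasized in Definition~\ref{def:pt-vec}: $\fix(w)\subseteq E$ consists of points while $\mov(w)\subseteq V$ consists of vectors, so the phrase ``$\mov(w)$ contains the origin'' must be read as the origin $0\in V$, and I would state explicitly that $0\in\mov(w)$ is precisely the assertion that some point has zero motion, i.e. is fixed.
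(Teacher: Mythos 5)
Your proof is correct. Note, though, that the paper does not actually argue this lemma at all: its ``proof'' is a bare citation to \cite[Proposition~3.2 and Definition~3.3]{BradyMcCammond-15}, where (in that reference) ellipticity is essentially \emph{defined} via the move-set containing the origin and the affine-subspace structure of $\mov(w)$ is established. Your write-up therefore takes a genuinely more self-contained route: you isolate the one nontrivial structural input --- that $\mov(w)$ is an affine subspace of $V$, which the paper has already recorded (with the same citation) in Definition~\ref{def:move-sets} --- and then observe that everything else is the tautology ``$0 \in \mov(w)$ iff some point has zero motion iff $\fix(w) \neq \emptyset$,'' with the implication $(3)\Rightarrow(2)$ following because an affine subspace containing the origin is linear. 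Each of your three implications $(1)\Rightarrow(3)\Rightarrow(2)\Rightarrow(1)$ is sound, and your care with the points-versus-vectors distinction ($\fix(w) \subset E$, $\mov(w) \subset V$) is exactly right. What your approach buys is a verification readable inside the paper without consulting the external source; what the paper's citation buys is brevity and avoidance of restating an argument that is, as you say, nearly tautological once the affine-subspace fact is granted.
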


\begin{proof}
  \cite[Proposition~3.2 and Definition~3.3]{BradyMcCammond-15}.
\end{proof}

\begin{defn}[Coroots]\label{def:coroots}
  Let $\Phi \subset V$ be a crystallographic root system. For each root
  $\alpha \in \Phi$, its \emph{coroot} is the vector $\alpha^\vee =
  \frac{2}{\langle \alpha, \alpha \rangle} \alpha$.  The collection of
  these coroots is another crystallographic root system $\Phi^\vee =
  \{ \alpha ^\vee \mid \alpha \in \Phi\}$.  The $\Z$-span $L(\Phi^\vee)$ of
  $\Phi^\vee$ in $V$ is the \emph{coroot lattice}; as an abelian 
  group, it is also isomorphic to $\Z^n$ where $n = \dim(V)$.
\end{defn}

\begin{defn}[Translations]\label{def:trans}
  For every vector $\lambda \in V$ there is a euclidean isometry
  $t_\lambda$ of $E$ called a \emph{translation} that sends each point $x
  \in E$ to $x + \lambda$.  Let $W$ be an affine Coxeter group acting
  on $E$ with root system $\Phi \subset V$.  An element of $W$ is a
  translation in this sense if and only if it is in the kernel $T$ of the
  projection $p\colon W \onto W_0$.  Moreover, the set of vectors in
  $V$ that define the translations in $T$ is identical to the set of
  vectors in the coroot lattice $L(\Phi^\vee)$.
\end{defn}

The next result follows immediately from the definitions.

\begin{prop}[Translated move-sets]
\label{prop:translated move-set}
  For any euclidean isometry $w$ and any translation $t_\lambda$, 
  $\mov(t_\lambda w) = \lambda + \mov(w)$.
\end{prop}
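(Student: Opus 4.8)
The plan is to prove the identity pointwise, directly from Definition~\ref{def:move-sets}, by tracking how the motion of a single point transforms when one post-composes $w$ with the translation $t_\lambda$. Since $\mov(w)$ is by definition the collection of all motions $\mu$ arising as $w(x) = x + \mu$ for some $x \in E$, it suffices to understand the effect of $t_\lambda$ on each individual motion and then take the union over all points.

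First I would fix a point $x \in E$ and let $\mu$ denote its motion under $w$, so that $w(x) = x + \mu$. Reading the product $t_\lambda w$ as the composition that applies $w$ first and then the translation $t_\lambda$, I would compute
\[
(t_\lambda w)(x) = t_\lambda\bigl(w(x)\bigr) = t_\lambda(x + \mu) = x + \mu + \lambda,
\]
using only the defining property of $t_\lambda$ from Definition~\ref{def:trans}. Hence the motion of $x$ under $t_\lambda w$ is exactly $\mu + \lambda$.

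It then remains to let $x$ range over all of $E$. As $x$ varies, its motion $\mu$ under $w$ ranges over precisely $\mov(w)$, and the computation above shows the corresponding motion under $t_\lambda w$ is $\mu + \lambda$; conversely, every motion of $t_\lambda w$ arises in this way from the same point. Thus the set of motions of $t_\lambda w$ equals $\{\mu + \lambda \mid \mu \in \mov(w)\} = \lambda + \mov(w)$, which is the claim.

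I do not expect a genuine obstacle here: as the surrounding text signals, the statement is essentially bookkeeping against the definitions. The only point demanding care is the order convention in the product $t_\lambda w$ — one must apply $w$ before $t_\lambda$ for the clean identity to hold. With this convention the translation simply adds the fixed vector $\lambda$ to every image, whereas the opposite composition $w \circ t_\lambda$ would instead feed $\lambda$ through the linear part of $w$, yielding the translate $\mov(w) + \overline{w}(\lambda)$ rather than $\lambda + \mov(w)$.
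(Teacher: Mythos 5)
Your proof is correct and is exactly the argument the paper has in mind: the paper states this proposition with no written proof, saying only that it ``follows immediately from the definitions,'' and your pointwise computation $(t_\lambda w)(x) = x + \mu + \lambda$ is precisely that unpacking. Your closing remark on the order of composition (and that $w\,t_\lambda$ would instead give $\mov(w)$ translated by the linear part applied to $\lambda$) is also accurate.
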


\begin{defn}[Translation-elliptic factorizations]\label{def:trans-ell}
  Let $W$ be an affine Coxeter group.  There are many ways to write an
  element $w \in W$ as a product of a translation $t_\lambda \in W$
  and an elliptic $u \in W$.  We call any such factorization $w =
  t_\lambda u$ a \emph{translation-elliptic factorization} of $w$.
  In any such factorization we call
  $t_\lambda$ the \emph{translation part} and $u$ the \emph{elliptic
    part}.
\end{defn}

\begin{defn}[Normal forms]\label{def:semidirect}
  Let $W$ be an affine Coxeter group acting cocompactly on a euclidean
  space $E$.  An identification of $W$ as a semidirect product $T
  \rtimes W_0$ corresponds to a choice of an inclusion map $i\colon
  W_0 \into W$ that is a section of the projection map $p\colon W \onto
  W_0$.  The unique point $x \in E$ fixed by the subgroup $i(W_0)$
  serves as our origin and every element has a unique factorization $w
  = t_\lambda u$ where $t_\lambda$ is a translation in $T$ and $u$ is
  an elliptic element in $i(W_0)$. In
  particular, $u = i(w_e)$ is the image under $i$ of the elliptic part
  of $w$ (Definition~\ref{def:ell-part}).  For a fixed choice of a
  section $i$, we call this unique factorization $w = t_\lambda u$ the
  \emph{normal form} of $w$ under this identification.
\end{defn}

If $w = t_\lambda u$ is a translation-elliptic factorization then $u_e
= w_e \in W_0$ and $t_\lambda$ is in the kernel of $p$.  Some
translation-elliptic factorizations come from an identification of $W$
and $T \rtimes W_0$ as in Definition~\ref{def:semidirect}, but not all
of them do: see Example~\ref{ex:insuff}.

\begin{prop}[Recognizing elliptics]\label{prop:ell-recog}
  Let $W$ be an affine Coxeter group and let $w = t_\lambda u$ be a
  translation-elliptic factorization of an element $w \in W$.  The
  following are equivalent: (1) $w$ is elliptic, (2) $\lambda \in
  \mov(u)$, (3) $\mov(w) = \mov(u)$, and (4) $\mov(w) = \mov(w_e)$.
\end{prop}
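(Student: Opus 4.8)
The plan is to reduce everything to two results already in hand: Proposition~\ref{prop:translated move-set}, which gives $\mov(w) = \mov(t_\lambda u) = \lambda + \mov(u)$, and Lemma~\ref{lem:ell}, which characterizes elliptic isometries by the condition $0 \in \mov(\cdot)$. The linchpin, which I would establish first, is the identity $\mov(u) = \mov(w_e)$ of subspaces of $V$. Since $u$ is elliptic it fixes some point $x_0 \in E$; using $x_0$ as a temporary origin, $u$ linearizes to its differential, which by Definition~\ref{def:sub-quo} is precisely $p(u)$, and a direct computation of motions gives $u(x_0 + v) - (x_0 + v) = (p(u) - 1)(v)$ for every $v \in V$. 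Hence $\mov(u) = \im(p(u) - 1) = \mov(p(u))$. Because $t_\lambda$ lies in the kernel $T = \Ker(p)$, we have $p(u) = p(w) = w_e$, so $\mov(u) = \mov(w_e)$; in particular $\mov(u)$ is a linear subspace (consistent with Lemma~\ref{lem:ell}).

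With this identity the equivalence of (3) and (4) is immediate, as the two right-hand sides $\mov(u)$ and $\mov(w_e)$ coincide. For the rest, abbreviate $U \define \mov(u)$, a linear subspace, and recall $\mov(w) = \lambda + U$. Then $w$ is elliptic iff, by Lemma~\ref{lem:ell}, $0 \in \mov(w) = \lambda + U$, iff $-\lambda \in U$, iff $\lambda \in U = \mov(u)$ (using that a subspace is closed under negation); this is the equivalence (1) $\Leftrightarrow$ (2). Moreover, since $U$ is a subspace, $\lambda \in U$ holds iff $\lambda + U = U$, i.e.\ iff $\mov(w) = \mov(u)$, which is (2) $\Leftrightarrow$ (3). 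Chaining these gives the equivalence of all four statements.

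Since the argument is a short chain of equivalences, there is no serious obstacle; the one step requiring care is the linchpin identity $\mov(u) = \mov(w_e)$. The subtlety is that the move-set is defined intrinsically as a set of vectors in $V$, independent of any choice of origin (Definition~\ref{def:move-sets}), while $w_e = p(w)$ acts on $V$ rather than on $E$ (Definition~\ref{def:ell-part}). The cleanest route is to choose the fixed point $x_0$ of $u$ as a temporary origin so that $u$ linearizes to its differential $p(u)$, compute the motions of the points $x_0 + v$ directly, and then invoke $p(u) = w_e$ because translations lie in $\Ker(p)$. Once this identity is in place, the remaining manipulations with the subspace $U$ and the coset $\lambda + U$ are routine.
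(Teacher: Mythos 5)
Your proof is correct and uses essentially the same ingredients as the paper's own argument: Proposition~\ref{prop:translated move-set}, Lemma~\ref{lem:ell}, the coset manipulation $\lambda \in U \Leftrightarrow \lambda + U = U$, and the fixed-point linearization identity $u(x_0+v) = x_0 + w_e(v)$. The only organizational difference is that you apply the linearization to the elliptic factor $u$, obtaining $\mov(u) = \mov(w_e)$ unconditionally and hence (3)$\Leftrightarrow$(4) for free, whereas the paper applies it to $w$ itself under the assumption that $w$ is elliptic to prove (1)$\Leftrightarrow$(4) directly; both routes are sound.
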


\begin{proof}
  By Proposition~\ref{prop:translated move-set}, $\mov(w) = \lambda +
  \mov(u)$.  Since $u$ is elliptic, $\mov(u)$ is a linear subspace of
  $V$ (Lemma~\ref{lem:ell}).  Thus, $\mov(w)$ is a linear subspace of
  $V$ if and only if $\lambda \in \mov(u)$, so (1) and (2) are
  equivalent.  Moreover, $\lambda \in \mov(u)$ if and only if $\mov(u)
  = \lambda + \mov(u) = \mov(w)$, so (2) and (3) are equivalent.
  Finally, for any elliptic element $w$, choose a point $x$ fixed by
  $w$; then $w(x + \lambda) = x + w_e(\lambda)$ for all vectors
  $\lambda$.  Thus $w(x + \lambda) = (x + \lambda) + (w_e(\lambda) -
  \lambda)$, so $\mu$ belongs to $\mov(w)$ if and only if $\mu$ is of
  the form $w_e(\lambda) - \lambda$, i.e., if $\mu$ belongs to
  $\mov(w_e)$.  Thus (1) and (4) are equivalent.
\end{proof}

When isometries are composed, the new move-set is contained in the
vector sum of the affine subspaces that are the individual move-sets.

\begin{prop}[Move-set addition]\label{prop:move-add}
  If $w_1$ and $w_2$ are elements of an affine Coxeter group $W$, then
  $\mov(w_1 \cdot w_2) \subset \mov(w_1) + \mov(w_2)$.  Moreover, if
  $r$ is a reflection and its root $\alpha$ is not in $\mov(w_e)$, 
  then $\mov(wr) = \mov(rw) = \mov(r) + \mov(w)$.
\end{prop}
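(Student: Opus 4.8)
The plan is to treat the two assertions separately, deriving the second from the first together with a dimension count that I push down to the spherical group $W_0$.

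For the containment $\mov(w_1 w_2) \subseteq \mov(w_1) + \mov(w_2)$, I would argue pointwise. Fix any point $x \in E$ and set $y = w_2(x)$. The motion of $x$ under $w_1 w_2$ is $(w_1 w_2)(x) - x = \bigl(w_1(y) - y\bigr) + \bigl(w_2(x) - x\bigr)$, where the first summand is the motion of $y$ under $w_1$ and the second is the motion of $x$ under $w_2$. Hence every vector in $\mov(w_1 w_2)$ lies in $\mov(w_1) + \mov(w_2)$, and since the sum of two affine subspaces is again an affine subspace, the containment follows.

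For the second assertion, I would first record that $\mov(r) = \R\alpha = \spn(\alpha)$ (Definition~\ref{def:roots}) and that, by the first part applied on each side, both $\mov(wr)$ and $\mov(rw)$ are contained in $\mov(w) + \mov(r) = \mov(w) + \R\alpha$. I would next identify the linear space of directions of an arbitrary affine move-set: writing $w = t_\lambda u$ in normal form (Definition~\ref{def:semidirect}) and combining Proposition~\ref{prop:ell-recog} with Proposition~\ref{prop:translated move-set} gives $\mov(w) = \lambda + \mov(w_e)$, so $\mov(w)$ has space of directions $\mov(w_e)$, and in particular $\dim\mov(g) = \dim\mov(p(g))$ for every $g \in W$. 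Because $\alpha \notin \mov(w_e)$, the line $\R\alpha$ is transverse to this space of directions, so $\dim\bigl(\mov(w) + \R\alpha\bigr) = \dim\mov(w) + 1$.

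The crux is then to show that $\mov(wr)$ actually attains this dimension, i.e.\ that $\dim\mov(wr) = \dim\mov(w) + 1$; combined with the containment in an affine subspace of the same finite dimension, this forces the equality $\mov(wr) = \mov(w) + \mov(r)$. Since $p$ is a homomorphism with $p(r) = r_\alpha$, the space of directions of $\mov(wr)$ is $\mov(w_e r_\alpha)$, so it suffices to prove the purely spherical statement $\dim\mov(w_e r_\alpha) = \dim\mov(w_e) + 1$ whenever $\alpha \notin \mov(w_e)$. Here I would invoke Carter's lemma (Remark~\ref{rem:prior}), under which $\dim\mov = \lr$ on $W_0$, together with the parity fact $\lr(w_e r_\alpha) = \lr(w_e) \pm 1$ from Remark~\ref{rem:lr-basic}; this already pins the change in dimension to exactly $\pm 1$. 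To fix the sign, I apply the first part to $w_e = (w_e r_\alpha) r_\alpha$, which yields $\mov(w_e) \subseteq \mov(w_e r_\alpha) + \R\alpha$: if the dimension dropped, then $\mov(w_e r_\alpha) + \R\alpha$ would be an affine space of dimension $\dim\mov(w_e)$ containing $\mov(w_e)$, hence equal to it, placing $\alpha \in \mov(w_e)$ and contradicting the hypothesis. Thus the dimension rises by one. The identical argument applied to $\mov(r_\alpha w_e)$ (again via the first part and the direction-space identity) handles $\mov(rw)$. I expect the sign determination inside $W_0$ to be the only genuinely delicate point; everything else is bookkeeping built on the direction-space identity $\dim\mov(g) = \dim\mov(p(g))$.
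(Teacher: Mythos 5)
Your proof is correct, and it is not the paper's proof. For the first containment you give exactly the paper's argument: the motion of $x$ under $w_1w_2$ is the motion of $x$ under $w_2$ plus the motion of $w_2(x)$ under $w_1$. For the second assertion, however, the paper offers no argument at all -- it simply cites \cite[Proposition~6.2]{BradyMcCammond-15}, where the statement is proved for general euclidean isometries by direct geometric means. You instead derive it inside the Coxeter-theoretic framework: the direction-space identity $\mov(g) = \mu + \mov(g_e)$ (assembled from Propositions~\ref{prop:translated move-set} and~\ref{prop:ell-recog}) reduces everything to the spherical claim that $\dim\mov(w_e r_\alpha) = \dim\mov(w_e)+1$ when $\alpha \notin \mov(w_e)$, which you obtain from Carter's lemma together with the parity restriction of Remark~\ref{rem:lr-basic}, pinning the sign by applying the first part to $w_e = (w_e r_\alpha)r_\alpha$: a dimension drop would force $\mov(w_e r_\alpha)+\R\alpha = \mov(w_e)$ and hence $\alpha \in \mov(w_e)$, a contradiction. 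This is a legitimate self-contained alternative (modulo Carter's lemma, which the paper already quotes), and it is not circular: none of the ingredients you invoke -- Propositions~\ref{prop:translated move-set} and~\ref{prop:ell-recog}, the parity homomorphism, Carter's lemma -- depend on Proposition~\ref{prop:move-add}. What the paper's citation buys is generality: the Brady--McCammond result holds for arbitrary reflections and isometries of $E$, with no crystallographic or Coxeter hypothesis. What your argument buys is transparency within the paper itself: the reader sees exactly why non-membership of the root in $\mov(w_e)$ forces the dimension to rise rather than fall, at the cost of leaning on Coxeter-specific structure (reflection length, the sign homomorphism, Carter's lemma) that the more general external statement does not need.
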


\begin{proof}
  The motion vector of a point $x$ under the product $w_1w_2$ is the
  motion vector of $x$ under $w_2$ plus the motion vector of $w_2(x)$
  under $w_1$.  The second assertion is part of
  \cite[Proposition~6.2]{BradyMcCammond-15}.
\end{proof}

This quickly leads to Carter's elegant geometric characterization of
the reflection length in spherical Coxeter groups \cite{Carter72} and
its extension to reflection length for elliptic elements in affine
Coxeter groups.

\begin{lemma}[Factoring elliptics]\label{lem:red-ell}
  Let $w =r_1 r_2 \cdots r_k$ be a product of reflections in an affine Coxeter 
  group, where reflection $r_i$ is through an affine hyperplane $H_i$ 
  orthogonal to the root $\alpha_i$.  If $w$ is elliptic and $\lr(w)=k$ then the
  roots $\alpha_i$ are linearly independent.  Conversely, if the roots
  $\alpha_i$ are linearly independent then $w$ is elliptic, $\lr(w)=k$, 
  $\fix(w) = H_1 \cap \cdots \cap H_k$, and $\mov(w) =
  \spn(\{\alpha_1,\alpha_2,\ldots,\alpha_k\})$.
\end{lemma}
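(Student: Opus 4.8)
The plan is to prove the two directions separately, establishing the richer converse first from repeated move-set addition and then bootstrapping the forward implication from it together with Carter's spherical lemma. For the converse, assume the roots $\alpha_1,\dots,\alpha_k$ are linearly independent and set $F = H_1 \cap \cdots \cap H_k$. Since the normals are independent, the inhomogeneous linear system cutting out $F$ is consistent, so $F$ is a nonempty affine subspace of dimension $n - k$ (where $n = \dim V$), and every point of $F$ is fixed by each $r_i$ and hence by $w$; thus $F \subseteq \fix(w)$ and $w$ is elliptic. To compute $\mov(w)$ I would induct on the partial products $w^{(i)} = r_1 \cdots r_i$, with base case $\mov(w^{(1)}) = \mov(r_1) = \spn(\alpha_1)$. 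At step $i$ the inclusion part of Proposition~\ref{prop:move-add} gives $\mov(p(w^{(i-1)})) \subseteq \spn(\alpha_1,\dots,\alpha_{i-1})$, and independence forces $\alpha_i \notin \spn(\alpha_1,\dots,\alpha_{i-1})$, hence $\alpha_i \notin \mov(p(w^{(i-1)}))$. The ``moreover'' clause of Proposition~\ref{prop:move-add} then yields $\mov(w^{(i)}) = \spn(\alpha_i) + \mov(w^{(i-1)})$, so by induction $\mov(w) = \spn(\alpha_1,\dots,\alpha_k)$, which has dimension $k$.

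To finish the converse I would obtain the general lower bound $\lr(w) \ge \dim(\mov(w))$: iterating the inclusion in Proposition~\ref{prop:move-add} shows that any factorization $w = s_1 \cdots s_m$ into reflections gives $\mov(w) \subseteq \mov(s_1) + \cdots + \mov(s_m)$, a span of $m$ lines, so $\dim(\mov(w)) \le m$. Combined with the trivial bound $\lr(w) \le k$ this gives $\lr(w) = k$. For the fixed space I would invoke complementarity: for the elliptic element $w$ the min-set equals $\fix(w)$, so by \cite[Lemma~3.6]{BradyMcCammond-15} one has $\dim(\fix(w)) = n - \dim(\mov(w)) = n - k = \dim(F)$; since $F \subseteq \fix(w)$ are affine subspaces of equal dimension, $\fix(w) = F = H_1 \cap \cdots \cap H_k$.

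For the forward direction, assume $w$ is elliptic with $\lr(w) = k$ and factorization $r_1 \cdots r_k$. The key is the identity $\lr(w) = \dim(\mov(w))$ for elliptic $w$; the inequality $\ge$ is the bound just described. For $\le$, I would choose a point $x$ fixed by $w$ and pass to the stabilizer $W_x \le W$, which is a finite reflection group generated by the reflections of $W$ whose hyperplanes pass through $x$ (a standard fact, \cite{Humphreys-90}). Since $w \in W_x$, Carter's spherical lemma applied \emph{inside} $W_x$ expresses $w$ as a product of $\dim(\mov(w))$ reflections, all of which are reflections of $W$, giving $\lr(w) \le \dim(\mov(w))$. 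Hence $k = \dim(\mov(w))$; since $\mov(w) \subseteq \spn(\alpha_1,\dots,\alpha_k)$ is $k$-dimensional, these $k$ roots span a $k$-dimensional space and are therefore linearly independent.

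The hard part is the inequality $\lr(w) \le \dim(\mov(w))$, i.e.\ actually producing a short factorization of an elliptic element from reflections of $W$. The naive route---lift a Carter factorization $p(w) = r_{\beta_1}\cdots r_{\beta_d}$ in $W_0$ to reflections $r_{\beta_i, c_i}$ of $W$ by choosing heights $c_i$---can fail, because the coroot lattice intersected with $\spn(\beta_1,\dots,\beta_d)$ may properly contain the lattice generated by the $\beta_i^\vee$, so the residual translation cannot always be killed and realizing $w$ may require a different spanning set of roots. This is precisely the subtlety behind Example~\ref{ex:insuff} and the input to Theorem~\ref{main:fact}. Routing through the point stabilizer $W_x$ avoids it entirely: Carter applied internally to the genuine finite reflection group $W_x$ selects admissible roots automatically, so no lattice-realizability question arises and the elliptic case reduces cleanly to the spherical one.
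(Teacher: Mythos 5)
Your proof is correct, but it takes a genuinely different route from the paper's: the paper disposes of this lemma in one sentence by citing \cite[Lemmas~3.6 and~6.4]{BradyMcCammond-15}, where these facts are proved in the full isometry group of $E$ generated by \emph{all} reflections, and then asserts that the proofs ``easily restrict'' to $W$ with reflection set $R$. You instead give a self-contained argument and, more interestingly, you isolate and solve the one place where that restriction is not free: producing a factorization of an elliptic element $w$ of length exactly $\dim(\mov(w))$ using only reflections \emph{of $W$}. Your mechanism --- pass to the stabilizer $W_x$ of a fixed point, which is a finite reflection group generated by the reflections of $W$ through $x$ (the standard fact in \cite{Humphreys-90}), and apply Carter's lemma internally to $W_x$ --- is exactly right, avoids any circular appeal to Proposition~\ref{prop:equal} (whose proof depends on this lemma), and your closing remarks correctly explain why no coroot-lattice realizability issue of the kind behind Example~\ref{ex:insuff} can arise. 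Your converse, via iterated use of the ``moreover'' clause of Proposition~\ref{prop:move-add} together with the min-set/move-set complementarity of \cite[Lemma~3.6]{BradyMcCammond-15} for the fixed-space claim, matches the spirit of the Brady--McCammond proofs. One small step to tighten: Proposition~\ref{prop:move-add} is stated for elements of $W$ acting on $E$, so to bound $\mov(p(w^{(i-1)}))$ you should either note that $w^{(i-1)}$ fixes $H_1\cap\cdots\cap H_{i-1}\supseteq F\neq\emptyset$, hence is elliptic and $\mov(p(w^{(i-1)}))=\mov(w^{(i-1)})$ by Proposition~\ref{prop:ell-recog}, or apply the addition to the lifts $r_{\alpha_j,0}$ fixing a chosen origin; either is a one-line fix. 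In terms of trade-offs, the paper's citation keeps the section lean and leans on existing literature, while your version makes the lemma independent of \cite{BradyMcCammond-15} and documents precisely why reflection length with respect to $R$, rather than with respect to all euclidean reflections, behaves identically on elliptic elements --- a point the paper leaves implicit.
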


\begin{proof}
  In \cite[Lemmas~3.6 and~6.4]{BradyMcCammond-15}, these facts are
  proved in the full isometry group of the euclidean space $E$
  generated by all possible reflections, but the statements and their
  proofs easily restrict to the case where every reflection is in the
  set $R \subset W$ and reflection length is computed with respect to
  this collection of reflections.
\end{proof}

\begin{rem}[Maximal elliptics]\label{rem:max-ell}
  Let $W$ be an affine Coxeter group acting cocompactly on an
  $n$-dimensional euclidean space $E$.  When $u$ is an elliptic
  element of reflection length~$n = \dim(E)$ (such as a Coxeter
  element of a maximal parabolic subgroup of $W$), its move-set
  $\mov(u)$ is all of $V$ (Lemma~\ref{lem:red-ell}).  By the
  equivalence of parts (1) and (2) in
  Proposition~\ref{prop:ell-recog}, the elements $t_\lambda u$ are
  elliptic for all choices of translation $t_\lambda\in T$ and, in
  particular, $\lr(t_\lambda u) = \lr(u) = n$ for every $t_\lambda \in
  T$.
\end{rem}

\subsection{Dimension of an element}\label{sec:dim}

This section shows how to assign a dimension to each element in a
spherical or affine Coxeter group.  It is based on the relationship
between move-sets and root spaces.

\begin{defn}[Root spaces]\label{def:move-set-arr}
  Let $V$ be a euclidean vector space with root system $\Phi$.  A
  subset $U \subset V$ is called a \emph{root space} if it is the span
  of the roots it contains.  In symbols, $U$ is a root space when $U =
  \spn(U \cap \Phi)$.  Equivalently, $U$ is a root space when $U$ is a
  linear subspace of $V$ that is spanned by a collection of roots, or
  when $U$ has a basis consisting of roots.  Since $\Phi$ is a finite
  set, there are only finitely many root spaces; the collection of all
  root spaces in $V$ is called the \emph{root space arrangement}
  $\arr(\Phi) = \{ U \subset V \mid U = \spn(U \cap \Phi)\}$.
\end{defn}

\begin{defn}[Root dimension]\label{def:root-dim}
  For any subset $A \subset V$, we define its \emph{root dimension}
  $\dim_\Phi(A)$ to be the minimal dimension of a root space in
  $\arr(\Phi)$ that contains $A$.  Since $V$ itself is a root space,
  $\dim_\Phi(A)$ is defined for every subset $A$ in $V$.
\end{defn}

\begin{defn}[Dimension of an element]\label{def:dim-elt}
  When $w$ is an element of a spherical or affine Coxeter group, its
  move-set is contained in a euclidean vector space $V$ that also
  contains the corresponding root system $\Phi$.  The \emph{dimension}
  $\dim(w)$ of such an element is defined to be the root dimension of
  its move-set.  In symbols, $\dim(w) = \dim_\Phi(\mov(w))$.  Let $W$
  be an affine Coxeter group acting on a euclidean space $E$ and let
  $p\colon W \onto W_0$ be its projection map.  For each element $w
  \in W$ we can compute the dimension of $w$ and the dimension of its
  elliptic part $w_e = p(w) \in W_0$. We call $e = e(w) = \dim(w_e)$
  the \emph{elliptic dimension} of $w$.  Instead of focusing on the
  dimension of $w$ itself, we focus on the number $d = d(w) = \dim(w)
  - \dim(w_e)$, which we call the \emph{differential dimension} of
  $w$.  Note that $\dim(w) = d+e$.
\end{defn}

The statistics $d(w)$ and $e(w)$ carry geometric meaning.

\begin{prop}[Statistics and geometry]\label{prop:stat-geo}
  Let $W$ be an affine Coxeter group.  An element $w\in W$ is a
  translation if and only if $e(w) = 0$, and $w$ is elliptic if and
  only if $d(w) = 0$.
\end{prop}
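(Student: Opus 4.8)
The plan is to prove the two biconditionals separately, in each case translating the condition on the statistic into a geometric condition on move-sets via the definitions and the characterizations already established.

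For the elliptic claim ($d(w)=0 \iff w$ elliptic), I would argue as follows. By definition $d(w) = \dim(w) - \dim(w_e) = \dim_\Phi(\mov(w)) - \dim_\Phi(\mov(w_e))$, so $d(w)=0$ exactly when $\mov(w)$ and $\mov(w_e)$ have the same root dimension. First I would observe that $\mov(w_e) \subseteq \mov(w)$ up to translation: writing a translation-elliptic factorization $w = t_\lambda u$ with $u_e = w_e$, Proposition~\ref{prop:translated move-set} gives $\mov(w) = \lambda + \mov(u)$, while $\mov(u) = \mov(w_e)$ since $u$ is elliptic (the computation $w_e(\mu)-\mu$ in the proof of Proposition~\ref{prop:ell-recog} shows $\mov(u)=\mov(u_e)=\mov(w_e)$). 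Thus $\mov(w)$ is a translate of the linear space $\mov(w_e)$, and $\dim(w) \geq \dim(w_e)$ always, with equality iff the smallest root space containing the affine space $\lambda + \mov(w_e)$ has the same dimension as the smallest root space containing $\mov(w_e)$. The key point is that an affine translate of a linear subspace $U$ sits inside a root space of the same dimension as the minimal one containing $U$ if and only if the translate actually passes through the origin inside that root space, i.e. iff $\mov(w)$ is itself linear. By Lemma~\ref{lem:ell}, $\mov(w)$ is a linear subspace precisely when $w$ is elliptic, so $d(w)=0 \iff w$ elliptic.

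For the translation claim ($e(w)=0 \iff w$ a translation), I would use $e(w) = \dim(w_e) = \dim_\Phi(\mov(w_e))$, so $e(w)=0$ iff $\mov(w_e) = \{0\}$, i.e. iff $w_e$ is the identity of $W_0$. Since $w_e = p(w)$ and $T = \ker p$ is exactly the set of translations (Definition~\ref{def:trans}), $w_e = \id$ holds iff $w \in T$ iff $w$ is a translation. The only thing to check is that $\mov(w_e)=\{0\}$ forces $w_e = \id$: this is immediate because $\mov(w_e) = \im(w_e - 1)$, which is trivial exactly when $w_e$ acts as the identity on $V$.

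The main obstacle is the elliptic direction, specifically the step asserting that $\dim_\Phi$ of an affine translate $\lambda + \mov(w_e)$ equals $\dim_\Phi(\mov(w_e))$ iff the translate is linear. I expect to justify this using the fact that root spaces are linear subspaces through the origin, so any root space containing an affine flat must contain the linear span of that flat, whose dimension strictly exceeds $\dim(\mov(w_e))$ unless the flat passes through the origin; combining this with Lemma~\ref{lem:ell} closes the argument. The translation direction is routine once the definitions are unwound.
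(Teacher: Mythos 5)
Your translation half and the forward elliptic implication ($w$ elliptic $\Rightarrow d(w)=0$) are correct and essentially identical to the paper's. The gap is in the converse elliptic direction, at precisely the step you flag as the ``key point.'' You claim that for a linear subspace $U$, a translate $\lambda + U$ lies in a root space of dimension $\dim_\Phi(U)$ only if $\lambda \in U$, and you justify this by noting that any root space containing $\lambda + U$ contains $\spn(\lambda + U)$, whose dimension exceeds $\dim(U)$ when $\lambda \notin U$. But that justification compares against the \emph{linear} dimension $\dim(\mov(w_e))$, whereas the quantity appearing in $d(w)$ is the \emph{root} dimension $e(w) = \dim_\Phi(\mov(w_e))$, and for an arbitrary subspace one only has $\dim_\Phi(U) \geq \dim(U)$, possibly strictly. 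Indeed your key point is false at this level of generality: in type $A_2$, let $U$ be a line in $V$ that is not a root line. The only root space containing $U$ is $V$ itself, so $\dim_\Phi(U) = 2$; then for any $\lambda \notin U$ the flat $\lambda + U$ also sits inside $V$, a root space of dimension $\dim_\Phi(U)$, even though it does not pass through the origin. So from $d(w)=0$ your argument only yields $\dim_\Phi(\mov(w)) \geq \dim(\mov(w_e)) + 1$, which is compatible with $\dim_\Phi(\mov(w)) = \dim_\Phi(\mov(w_e))$ unless the two notions of dimension agree for $\mov(w_e)$.

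What closes the gap is that $U = \mov(w_e)$ is not an arbitrary subspace: by Lemma~\ref{lem:red-ell} (Carter), the move-set of an elliptic element is spanned by linearly independent roots, hence is itself a root space, so $\dim_\Phi(U) = \dim(U)$. This is the one nontrivial ingredient, and it is exactly what the paper's proof cites at this point: if $d(w)=0$, then since $\mov(w_e)$ is a root subspace, the minimal root space $M$ containing $\mov(w) = \lambda + U$ has dimension $\dim(U)$; by Lemma~\ref{lem:sep}, $M$ contains both $U$ and $\lambda$, so $M = U$ and $\lambda \in U$, i.e.\ $\mov(w) = \mov(w_e)$, and Proposition~\ref{prop:ell-recog} then gives ellipticity. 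With the single sentence ``$\mov(w_e)$ is a root subspace by Lemma~\ref{lem:red-ell}'' inserted, your argument goes through; without it, the pivotal step is unsupported and, as stated for general linear subspaces, false.
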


\begin{proof}
  If $w$ is a translation, then it is in the kernel of the projection $p$,
  $w_e$ is the identity, and $e(w) = 0$.  Conversely, if $e(w) = 0$
  then $\mov(w_e) = \{0\}$, $w_e$ is the identity, $w$ is in the
  kernel of $p$ and thus is a translation.  
  
  If $w$ is elliptic, then by Proposition~\ref{prop:ell-recog}
  $\mov(w) = \mov(w_e)$, so $\dim(w) = \dim(w_e)$ and $d(w) = 0$.
  Conversely, if $d(w) =0$ then $\dim(w) = \dim(w_e)$.  By
  Lemma~\ref{lem:red-ell}, $\mov(w_e)$ is itself a root subspace, so
  we must have $\mov(w) = \mov(w_e)$.  Then $w$ is elliptic by
  Proposition~\ref{prop:ell-recog}.
\end{proof}

Thus one way to interpret these numbers is that, roughly speaking,
$d(w)$ measures how far $w$ is from being an elliptic element and
$e(w)$ measures how far $w$ is from being a translation.  We record one
more elementary fact for later use.

\begin{lem}[Separation]\label{lem:sep}
  Let $M$ and $U$ be linear subspaces of a vector space $V$ and let
  $\lambda$ be a vector.  The space $M$ contains $\lambda + U$ if and
  only if $M$ contains both $U$ and $\lambda$.
\end{lem}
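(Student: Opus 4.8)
The plan is to prove this purely set-theoretically, since the statement makes no reference to Coxeter groups, reflections, or root systems at all—it is a basic fact about affine subspaces. I would prove both directions of the biconditional. The easy direction is that if $M$ contains both $U$ and $\lambda$, then $M$ contains $\lambda + U$: since $M$ is a linear subspace closed under addition, for any $u \in U$ the sum $\lambda + u$ lies in $M$ because both $\lambda \in M$ and $u \in M$. Hence $\lambda + U \subseteq M$.

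For the converse, suppose $M$ contains the affine subspace $\lambda + U$. First I would show $\lambda \in M$: taking $u = 0 \in U$ (since $U$ is a linear subspace it contains the zero vector), we get $\lambda = \lambda + 0 \in \lambda + U \subseteq M$. Next I would show $U \subseteq M$: for any $u \in U$, both $\lambda + u \in M$ (by hypothesis) and $\lambda \in M$ (just shown), so their difference $u = (\lambda + u) - \lambda$ lies in $M$ because $M$ is closed under subtraction. This completes the inclusion $U \subseteq M$, and together with $\lambda \in M$ gives the converse.

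The whole argument hinges on two structural features: that $U$ is a \emph{linear} subspace (so $0 \in U$, which is what lets us extract $\lambda$ from $\lambda + U$) and that $M$ is a \emph{linear} subspace (so it is closed under addition and subtraction). I would be careful to use the hypothesis that $U$ is linear rather than merely affine, since for a general affine $U$ not containing the origin the statement would fail. There is no real obstacle here; the only thing to watch is invoking the linearity of both subspaces explicitly at the right moments, and making sure the direction using $0 \in U$ is clearly justified.
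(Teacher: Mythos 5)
Your proof is correct and follows essentially the same route as the paper's: both directions use that $0 \in U$ to extract $\lambda$ from $\lambda + U$, and then linearity of $M$ (the paper phrases it as adding $-\lambda$ to the whole set $\lambda+U$, you phrase it elementwise as $(\lambda+u)-\lambda$) to recover $U \subseteq M$. Your version just spells out the "immediate" direction that the paper leaves to the reader.
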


\begin{proof}
  If $M$ contains $\lambda + U$ then it contains $\lambda$ (since $U$
  contains the origin) and it contains $-\lambda$ (since $M$ is closed
  under negation).  Thus it contains $(\lambda + U) + (-\lambda) = U$.
  The other direction is immediate.
\end{proof}

The next two propositions record the basic relationships between
these dimensions and reflection length.

\begin{prop}[Inequalities]\label{prop:inequal}
  Let $W$ be an affine Coxeter group.  For every element $w \in W$,
  $\lr(w) \geq \dim(w) \geq \dim(w_e) = \dim(U)$ where $U = \mov(w_e)$
  is the move-set of the elliptic part of $w$.
\end{prop}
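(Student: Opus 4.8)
The plan is to verify the three assertions essentially from right to left, since the rightmost is a reinterpretation of definitions while the two inequalities rest on results already in hand. Begin with the equality $\dim(w_e) = \dim(U)$. Here $w_e = p(w)$ lives in the spherical group $W_0$, so it fixes the origin and $U = \mov(w_e)$ is a genuine linear subspace of $V$ (Lemma~\ref{lem:ell}). Choosing a reduced reflection factorization of $w_e$ and invoking Lemma~\ref{lem:red-ell}, the move-set $U$ is the span of the (linearly independent) roots of those reflections, so $U$ is itself a root space. For a root space the root dimension and the ordinary linear dimension coincide, giving $\dim(w_e) = \dim_\Phi(U) = \dim(U)$.

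Next I would prove $\lr(w) \geq \dim(w)$. Write $w = r_1 \cdots r_k$ with $k = \lr(w)$, where $r_i$ has root $\alpha_i \in \Phi$ and hence $\mov(r_i) = \spn(\alpha_i)$. Iterating the containment from Proposition~\ref{prop:move-add} yields $\mov(w) \subseteq \mov(r_1) + \cdots + \mov(r_k) = \spn(\alpha_1,\ldots,\alpha_k)$. The right-hand side is a root space of linear dimension at most $k$, and since $\dim(w) = \dim_\Phi(\mov(w))$ is by definition the minimal dimension of a root space containing $\mov(w)$, we conclude $\dim(w) \leq k = \lr(w)$.

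For the middle inequality $\dim(w) \geq \dim(w_e)$, I would fix any translation-elliptic factorization $w = t_\lambda u$. By Proposition~\ref{prop:translated move-set}, $\mov(w) = \lambda + \mov(u)$; since $u$ is elliptic with $u_e = w_e$, Proposition~\ref{prop:ell-recog} gives $\mov(u) = \mov(w_e) = U$, so $\mov(w) = \lambda + U$. Now let $M$ be a root space of minimal dimension containing $\mov(w)$, so that $\dim(w) = \dim(M)$. Because $M$ is a linear subspace containing $\lambda + U$, the Separation Lemma (Lemma~\ref{lem:sep}) shows $M \supseteq U$. Thus $M$ is a root space containing $U$, and minimality over such spaces yields $\dim(w) = \dim(M) \geq \dim_\Phi(U) = \dim(w_e)$.

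The argument is short precisely because all the substantive inputs---move-set addition, translated move-sets, the elliptic recognition criterion, and the separation lemma---are already established. The only point requiring genuine care, and thus the main obstacle, is the distinction between the affine move-set $\mov(w) = \lambda + U$ and the linear subspaces that are allowed to be root spaces. This gap is exactly what the Separation Lemma bridges, which is why isolating that elementary fact in advance is what keeps the middle inequality honest rather than merely plausible.
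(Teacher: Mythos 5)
Your proof is correct and follows essentially the same route as the paper: both arguments use Proposition~\ref{prop:move-add} with a minimum-length factorization for $\lr(w) \geq \dim(w)$, a translation-elliptic factorization together with Proposition~\ref{prop:translated move-set} and Lemma~\ref{lem:sep} for $\dim(w) \geq \dim(w_e)$, and Lemma~\ref{lem:red-ell} (the move-set of an elliptic is itself a root space) for $\dim(w_e) = \dim(U)$. The only differences are cosmetic: you treat the three claims in the reverse order and spell out a few citations (such as Proposition~\ref{prop:ell-recog} for $\mov(u) = \mov(w_e)$) that the paper leaves implicit.
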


\begin{proof}
  If $w$ can be written as a product of $k$ reflections, then
  $\mov(w)$ is contained in the (linear) span of their roots
  (Proposition~\ref{prop:move-add}).  Thus $k \geq \dim(w)$, and
  choosing a minimum-length reflection factorization for $w$ shows
  that $\lr(w) \geq \dim(w)$.  Next, if $w = t_\lambda u$ is a
  translation-elliptic factorization of $w$, then $\mov(w) = \lambda +
  U$, where $U = \mov(u) = \mov(w_e)$
  (Proposition~\ref{prop:translated move-set}).  By
  Lemma~\ref{lem:sep}, any root space $M$ that contains $\mov(w) =
  \lambda + U$ also contains $\mov(w_e) = U$, which proves $\dim(w)
  \geq \dim(w_e)$.  Finally, since $U$ is itself a root subspace by
  Lemma~\ref{lem:red-ell}, we have $\dim(w_e) = \dim(U)$.
\end{proof}

\begin{prop}[Elliptic equalities]\label{prop:equal}
  Let $W$ be an affine Coxeter group.  When $w$ is elliptic, $\lr(w) =
  \dim(w) = \dim(w_e) = \dim(U)$ where $U = \mov(w) = \mov(w_e)$.
\end{prop}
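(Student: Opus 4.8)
The plan is to lean on the inequalities already assembled in Proposition~\ref{prop:inequal}, which for any element give $\lr(w)\geq\dim(w)\geq\dim(w_e)=\dim(U)$. Once $w$ is elliptic, Proposition~\ref{prop:ell-recog} supplies $\mov(w)=\mov(w_e)=U$, so $\dim(w)=\dim(w_e)$ and the three dimension quantities are already forced to agree. Hence the entire content of the proposition reduces to the single reverse inequality $\lr(w)\leq\dim(w)$: proving it squeezes the chain and collapses all four numbers to a common value.

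For that reverse inequality I would not try to build an economical factorization by hand, but instead analyze an arbitrary shortest one. Since $R$ generates $W$, the length $k=\lr(w)$ is finite; fix a minimal factorization $w=r_1\cdots r_k$ with $r_i$ reflecting in a hyperplane orthogonal to the root $\alpha_i$. Lemma~\ref{lem:red-ell} is tailor-made here: its forward half says that because $w$ is elliptic with $\lr(w)=k$ the roots $\alpha_1,\dots,\alpha_k$ are linearly independent, and its converse half then identifies $\mov(w)=\spn(\alpha_1,\dots,\alpha_k)$. Linear independence makes this span $k$-dimensional, so $\mov(w)$ is a root space of linear dimension $k$; being its own smallest enclosing root space, its root dimension $\dim(w)$ is also $k$. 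Thus $\dim(w)=k=\lr(w)$, which combined with the chain above yields $\lr(w)=\dim(w)=\dim(w_e)=\dim(U)$.

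I expect the main pitfall-to-be-avoided, rather than a genuine difficulty, to be the tempting alternative of \emph{constructing} a length-$\dim(U)$ factorization by pulling Carter's spherical factorization of $w_e$ (Remark~\ref{rem:prior}) back up into $W$. That route is delicate: the linear reflections realizing $w_e$ inside $W_0$ need not lift to honest affine reflections $r_{\alpha,j}$ whose hyperplanes pass through a fixed point of $w$, since those hyperplanes must satisfy the integrality conditions defining $R$. Reasoning instead about a pre-existing minimal factorization and invoking Lemma~\ref{lem:red-ell} sidesteps this entirely, which is exactly why the real work is already packaged in that lemma.
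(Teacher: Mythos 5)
Your proposal is correct and takes essentially the same route as the paper's proof: both identify $\mov(w)=\mov(w_e)=U$ via Proposition~\ref{prop:ell-recog}, apply Lemma~\ref{lem:red-ell} to a minimal reflection factorization to conclude that $U$ is a root space with $\lr(w)=\dim(U)$, and then collapse the inequality chain from Proposition~\ref{prop:inequal}. The paper simply compresses into one sentence the application of Lemma~\ref{lem:red-ell} that you spell out in detail.
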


\begin{proof}
  By Proposition~\ref{prop:ell-recog}, $\mov(w) = \mov(w_e) = U$.  By 
  Lemma~\ref{lem:red-ell}, $U$ is
  a root space and $\lr(w) = \dim(U)$.  Proposition~\ref{prop:inequal}
  completes the proof.
\end{proof}

\begin{table}
  \begin{tabular}{|c|c|cc|c|}
    \hline
    $w$ & $\mov(w)$ & $d$ & $e$ & $\lr$\\
    \hline
    identity & the origin & 0 & 0 & 0 \\
    reflection & a root line & 0 & 1 & 1 \\
    rotation & the plane & 0 & 2 & 2 \\
    \hline
    translation & an affine point & 1 or 2 & 0 & 2 or 4\\
    \hline
    glide reflection & an affine line & 1 & 1 & 3 \\
    \hline
  \end{tabular}
  \vspace*{1em}
  \caption{Basic invariants for the $5$ types of elements in an affine
    Coxeter group acting on the euclidean plane.}\label{tab:euc-plane}
\end{table}

\begin{example}[Euclidean plane]\label{ex:euc-plane}
  The affine Coxeter groups that act on the euclidean plane have five
  different types of move-sets (see Table~\ref{tab:euc-plane}).  
  Among the elliptic elements, the move-set of the identity is the
  point at the origin, the move-set of a reflection $r$ with root
  $\alpha \in \Phi$ is the root line $\R\alpha$, and the move-set of
  any non-trivial rotation is all of $V=\R^2$.  For these elements, 
  $d(w)= 0$ and $\lr(w) = \dim(w) = \dim(w_e) = e(w)$
  where this common value is $0$, $1$, or $2$, respectively.  The
  move-set of a non-trivial translation $t_\lambda$ is the
  single nonzero vector $\{\lambda\}$.  Its elliptic
  dimension is $0$ and its differential dimension is either $1$ 
  (when $\lambda$ is contained in a root line $\R\alpha$) or $2$.  By
  \cite[Proposition~4.3]{McPe-11}, $\lr(t_\lambda)$ is twice its
  dimension. Finally, when $w$ is a glide reflection, $\mov(w)$ is a
  line not through the origin, so $e = \dim(w_e) = 1$, $\dim(w) = 2$, 
  $d=2-1=1$, and $\lr(w) = 3$.
\end{example}

We finish this section with a pair of remarks about computing $e$ and
$d$ in general.  Let $W$ be an affine Coxeter group with a fixed
identification of $W$ with $T \rtimes W_0$ and let $w \in W$ be an
element that is given in its semidirect product normal form $w =
t_\lambda u$ for some vector $\lambda$ in the coroot lattice
$L(\Phi^\vee)$ and some elliptic element $u$. Computing the elliptic
dimension $e(w)$ is straightforward.

\begin{rem}[Computing elliptic dimension]\label{rem:computing-e}
  To compute the elliptic dimesion $e(w)$ it is sufficient to simply
  compute the dimension of the move-set of the elliptic part $u$ of
  its normal form.  Indeed, by Definition~\ref{def:dim-elt},
  $e(w) = \dim(w_e)$, but since $\mov(w_e)$ is
  itself a root subspace by Lemma~\ref{lem:red-ell},
  $\dim_\Phi(\mov(w_e)) = \dim(\mov(w_e))$.  Finally, since $p(u) =
  p(w) = w_e$, $\mov(w_e) = \mov(u)$ by Proposition~\ref{prop:equal}
  and so $\dim(\mov(w_e)) = \dim(\mov(u))$.
\end{rem}

Computing the differential dimension $d(w)$ is more complicated but it
can be reduced to computing the dimension of a point in a simpler
arrangement of subspaces in a lower dimensional space.  How much lower
depends on the elliptic dimension $e(w)$.

\begin{rem}[Computing differential dimension]\label{rem:computing-d}
  By Definition~\ref{def:dim-elt}, to compute the differential 
  dimension $d(w)$, we need to find the minimal dimension of a root 
  subspace containing $\mov(w)$ and then subtract $e(w)$ from this
  value.  Since $w = t_\lambda u$, $\mov(w) = \lambda + U$ where $U =
  \mov(u) = \mov(w_e)$.  Moreover, by Lemma~\ref{lem:sep} we only need
  to consider root spaces that contain $\lambda$ and $U$ or,
  equivalently, root spaces that contain $\lambda + U$ and $U$.  Let
  $q\colon V \onto V/U$ be the natural quotient linear transformation
  whose kernel is $U$.  Under the map $q$, the coset $\lambda + U$ is
  sent to a point in $V/U$ that we call $\lambda/U$ and the subspaces in
  $\arr(\Phi)$ containing $U$ are sent to a collection of subspaces in
  $V/U$ that we call $\arr(\Phi/U)$.  Let $\dim_{\Phi/U}(\lambda/U)$
  be the minimal dimension of a subspace in $\arr(\Phi/U)$ that
  contains the point $\lambda/U$.  Since the dimensions involved have
  all been diminished by $e(w) = \dim(U)$, we have that
  $\dim_{\Phi/U}(\lambda/U) = d(w)$ is the differential dimension of
  $w$.
\end{rem}

\section{Proofs of main theorems}\label{sec:mains}

In the following subsections we prove our main results.

\subsection{Proof of Theorem~\ref{main:dim}}\label{subsec:A}

In this subsection, we prove Theorem~\ref{main:dim} by showing the
claimed value is both a lower bound and an upper bound for the
reflection length of $w$.  Let $W$ be an affine Coxeter group with
root system $\Phi$ and projection map $p\colon W \onto W_0$.  For each
element $w \in W$ we write $w_e = p(w)$ for the elliptic part of $w$,
$e = e(w) = \dim(w_e)$ for its elliptic dimension, and $d = d(w) =
\dim(w) - \dim(w_e)$ for its differential dimension.

\begin{prop}[Lower bound]\label{prop:thmA-lb}
  For every $w \in W$, $\lr(w) \geq 2d + e$.
\end{prop}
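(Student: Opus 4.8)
The plan is to prove the lower bound by strong induction on the reflection length $k = \lr(w)$, peeling off a single reflection and tracking how the quantity $2\dim(w) - \dim(w_e)$ responds; note this quantity equals $2d + e$, since $d = \dim(w) - \dim(w_e)$ and $e = \dim(w_e)$. The base case $k = 0$ is immediate, as then $w$ is the identity and $d = e = 0$. For the inductive step I would fix a minimal reflection factorization $w = r_1 r_2 \cdots r_k$, set $r = r_1$ with root $\alpha$, and put $w' = r_2 \cdots r_k$, so that $w = r w'$ and $\lr(w') = k - 1$ by the triangle inequality (Remark~\ref{rem:lr-basic}). By the induction hypothesis $2\dim(w') - \dim(w'_e) \leq k - 1$, so it suffices to show that $\bigl(2\dim(w) - \dim(w_e)\bigr) - \bigl(2\dim(w') - \dim(w'_e)\bigr) \leq 1$.

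I would control the two ingredients separately. For the elliptic part, projecting gives $w'_e = p(w') = r_\alpha w_e$ in $W_0$; since multiplying by a reflection changes reflection length by exactly $\pm 1$ (Remark~\ref{rem:lr-basic}) and Carter's lemma identifies $\dim(w_e) = \lr_{W_0}(w_e)$ in the spherical group (Remark~\ref{rem:prior}), I obtain $\dim(w'_e) = \dim(w_e) + \epsilon$ with $\epsilon \in \{+1, -1\}$. For the dimensions, Proposition~\ref{prop:move-add} gives $\mov(w') \subseteq \R\alpha + \mov(w)$ and $\mov(w) \subseteq \R\alpha + \mov(w')$; adjoining one root to a minimal containing root space raises its root dimension by at most one, so $\delta := \dim(w) - \dim(w') \in \{-1, 0, 1\}$. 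The desired inequality then reads $2\delta + \epsilon \leq 1$, and the only dangerous case is $\epsilon = +1$, where I must establish $\delta \leq 0$.

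The crux is precisely this case. When $\epsilon = +1$, the increase in elliptic dimension forces $\alpha \notin \mov(w_e) =: U$ (again by Carter's lemma in $W_0$). But that is exactly the hypothesis under which the second part of Proposition~\ref{prop:move-add} applies to $w' = rw$, upgrading the containment to the equality $\mov(w') = \R\alpha + \mov(w) = \lambda + U'$, where $\mov(w) = \lambda + U$ and $U' = U \oplus \R\alpha$. Since $U \subseteq U'$, every root space containing the affine set $\lambda + U'$ also contains $\lambda + U$ (compare Lemma~\ref{lem:sep}), so $\dim(w) = \dim_\Phi(\lambda + U) \leq \dim_\Phi(\lambda + U') = \dim(w')$, i.e.\ $\delta \leq 0$, as needed. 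When instead $\epsilon = -1$ the bound is automatic, since $2\delta + \epsilon \leq 2 \cdot 1 - 1 = 1$. This closes the induction and yields $\lr(w) \geq 2\dim(w) - \dim(w_e) = 2d + e$.

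I expect the main obstacle to be exactly the $\epsilon = +1$ step: the key observation is that the very condition which makes the elliptic dimension increase, namely $\alpha \notin U$, is precisely the hypothesis needed to pin down $\mov(w')$ \emph{exactly} through Proposition~\ref{prop:move-add}, rather than merely bounding it. Without the exact move-set one only recovers the weaker $\delta \leq 1$, which would give $2\delta + \epsilon \leq 3$ and break the induction; recognizing this matching of hypotheses is what makes the argument go through.
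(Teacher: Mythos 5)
Your proof is correct, and it takes a genuinely different route from the paper's. The paper argues globally on an entire minimal factorization $w = r_1 \cdots r_k$: it uses the Hurwitz rewriting lemma (Lemma~\ref{lem:hurwitz}) to move a root-basis subset of the factors to the front, splits $w = uv$ with $u$ elliptic and $\mov(u)$ equal to the span $M$ of all the roots, and then obtains $m \leq e + (k-m)$ by projecting $u = wv^{-1}$ to $W_0$ and applying move-set addition there; combined with $d + e \leq m$ this gives $k \geq 2m - e \geq 2d+e$. You instead run an induction on $\lr(w)$, peeling off one reflection and showing that the potential $2\dim(w) - \dim(w_e)$ changes by at most $1$ per reflection; the crux is your case $\epsilon = +1$, where Carter's lemma forces $\alpha \notin \mov(w_e)$, which is exactly the hypothesis of the exactness clause of Proposition~\ref{prop:move-add}, giving $\mov(rw) = \R\alpha + \mov(w) \supseteq \mov(w)$ and hence $\dim(w) \leq \dim(rw)$. (At that point you do not actually need Lemma~\ref{lem:sep} or the normal form $\lambda + U$: the containment $\mov(w) \subseteq \mov(w')$ already yields $\delta \leq 0$.) Your route avoids Hurwitz moves entirely and produces a sharper local statement -- each reflection moves $2\dim(w) - \dim(w_e)$ by at most one -- but it leans on the exactness part of move-set addition (from Brady--McCammond), which the paper's lower-bound proof never needs and reserves for the upper bound (Proposition~\ref{prop:thmA-ub}), as well as on the refined spherical fact that $\lr(r_\alpha u) > \lr(u)$ forces $\alpha \notin \mov(u)$. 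The paper's global argument keeps the lower bound's toolkit to containments only, and its factorization technique is the same one reused in the proof of Theorem~\ref{main:fact}, so the two proofs there run in parallel; your inductive argument is self-contained and arguably more elementary in structure, at the cost of invoking stronger inputs.
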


\begin{proof}
  Let $w = r_1 r_2 r_3 \cdots r_k$ be a reflection factorization of
  $w$. For each reflection $r_i$, let $\alpha_i$ be one of its
  roots.  Let $M$ be the span of this set of vectors $\alpha_i$ and
  let $m = \dim(M)$.  By Proposition~\ref{prop:move-add}, 
  $\mov(w) \subset M$ and thus $d + e = \dim(w) \leq m$.

  Next, pick a basis for $M$ from among the $\alpha_i$.  By
  Lemma~\ref{lem:hurwitz}, we may assume that the reflections
  corresponding to our chosen basis are the reflections $r_1, \ldots,
  r_m$.  Define $u = r_1 r_2\cdots r_m$ 
  and $v = r_{m+1} \cdots r_{k-1}r_k$, so that $w = uv$ and $u =
  wv^{-1}$.

  By Lemma~\ref{lem:red-ell}, $u$ is elliptic and $\mov(u) =\mov(u_e) =
  M$, and so $\dim(u) = \dim(u_e) = m$ (Proposition~\ref{prop:equal}).
  By Proposition~\ref{prop:inequal}, $ k-m \geq \dim(v) \geq \dim(v_e)
  = \dim(v_e^{-1})$ because $v$ is a product of $k-m$ reflections.
  Projecting $u = wv^{-1}$ gives $u_e = w_e v_e^{-1}$ and thus
  $\mov(u_e) \subset \mov(w_e) + \mov(v_e^{-1})$ by
  Proposition~\ref{prop:move-add}.  Taking dimensions shows that $m
  \leq e + (k - m)$.  Thus, $k \geq 2m - e \geq 2(d+e)-e = 2d + e$, as
  claimed.
\end{proof}

\begin{prop}[Upper bound]\label{prop:thmA-ub}
  For every $w\in W$, $\lr(w) \leq 2d+e$.
\end{prop}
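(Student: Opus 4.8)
The plan is to prove the matching upper bound by exhibiting a single, carefully chosen translation-elliptic factorization $w = t_\mu u'$ and then invoking the triangle inequality for $\lr$ (Remark~\ref{rem:lr-basic}): this gives $\lr(w) \le \lr(t_\mu) + \lr(u')$, so it suffices to produce a factorization with $\lr(t_\mu) \le 2d$ and $\lr(u') = e$. Write $w = t_\lambda u$ in the normal form coming from our fixed semidirect-product identification, so that $u$ is elliptic with $U = \mov(u) = \mov(w_e)$, $\dim(U) = e$, and $\mov(w) = \lambda + U$ by Proposition~\ref{prop:translated move-set}. The normal-form translation $t_\lambda$ itself need not satisfy $\lr(t_\lambda) \le 2d$ (this is exactly the phenomenon in Example~\ref{ex:insuff}), so the real work is to replace $\lambda$ by a better representative of the coset $\lambda + U$.

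The elliptic factor is essentially free. For any vector $\nu \in U$ the element $u' = t_\nu u$ is again elliptic with $\mov(u') = \mov(u) = U$, by the equivalence of parts (1), (2) and (3) in Proposition~\ref{prop:ell-recog}, and hence $\lr(u') = \dim(U) = e$ by Proposition~\ref{prop:equal}. Thus any $\mu$ of the form $\mu = \lambda - \nu$ with $\nu \in U$ yields a valid factorization $w = t_\mu u'$ with $\lr(u') = e$, and we are reduced to the single requirement $\lr(t_\mu) \le 2d$. Since $\lr(t_\mu) = 2\dim_\Phi(\mu)$ by \cite[Proposition~4.3]{McPe-11}, everything comes down to finding a coroot-lattice vector $\mu$ congruent to $\lambda$ modulo $U$ whose root dimension $\dim_\Phi(\mu)$ is at most $d$.

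To locate such a $\mu$, recall from Remark~\ref{rem:computing-d} that $d = \dim_{\Phi/U}(\lambda/U)$, so there is a root space $M \in \arr(\Phi)$ with $U \subseteq M$, $\lambda \in M$, and $\dim(M) = d + e$; by Lemma~\ref{lem:sep} this $M$ is a minimal root space containing $\mov(w)$. It therefore suffices to split off a $d$-dimensional root space $N \subseteq M$ complementary to $U$, namely $N \oplus U = M$, together with a decomposition $\lambda = \mu + \nu$ with $\mu \in N$ and $\nu \in U$: then $\mu$ lies in the $d$-dimensional root space $N$, so $\dim_\Phi(\mu) \le d$, while $\nu \in U$ guarantees $\lr(u') = e$ as above. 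The main obstacle is to carry this out \emph{integrally}, that is, to arrange $\mu$ to lie in the coroot lattice $L(\Phi^\vee)$ (so that $t_\mu$ is a genuine element of $W$ to which \cite[Proposition~4.3]{McPe-11} applies) while keeping its root dimension equal to $d$. Producing such a lattice-compatible complementary factorization inside the finite root subsystem $\Phi \cap M$ is precisely where I expect to invoke the technical result \cite[Corollary~1.4]{LewisReiner-16}; once it supplies the desired $\mu$, the triangle inequality closes the argument and simultaneously yields the sharp factorization asserted in Theorem~\ref{main:fact}.
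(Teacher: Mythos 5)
Your reduction is sound as far as it goes: the triangle inequality, the observation that $u' = t_{\lambda - \mu}\,u$ is elliptic with $\lr(u') = e$ whenever $\mu \in \lambda + U$ lies in the coroot lattice, and the reformulation of the problem as finding a coroot-lattice vector $\mu \in \lambda + U$ with $\dim_\Phi(\mu) \leq d$. But that last step is the entire difficulty, and your proposal does not prove it. Once a complementary root space $N$ with $N \oplus U = M$ is chosen, the coset $\lambda + U$ meets $N$ in exactly \emph{one} point, and nothing guarantees that this point lies in $L(\Phi^\vee)$; there are only finitely many root-space complements to try, and you give no argument that any of them works. The tool you gesture at cannot supply this: \cite[Corollary~1.4]{LewisReiner-16} (Proposition~\ref{prop:inefficient}) rearranges a \emph{given} reflection factorization of a spherical-group element by Hurwitz moves -- it has no lattice-theoretic content and cannot manufacture a short factorization out of nothing. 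Indeed, the way the paper extracts exactly your desired factorization from that result (in the proof of Theorem~\ref{main:fact}) is by applying it to a \emph{minimum-length} reflection factorization of $w$, whose length is known to be $2d+e$ only because Theorem~\ref{main:dim} has already been established. So proving the upper bound of Theorem~\ref{main:dim} by first producing the factorization of Theorem~\ref{main:fact} is circular within the paper's logic; you would need a genuinely new, independent proof of the lattice-point statement, which is absent.

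The paper's own proof sidesteps the integrality problem entirely by not insisting on a translation-elliptic splitting. Starting from the same data -- the minimal root space $M \supseteq \mov(w)$ and a relative root basis $\{\alpha_1, \ldots, \alpha_d\}$ with $M = \spn(\{\alpha_1,\ldots,\alpha_d\}) \oplus U$ -- it chooses reflections $r_i \in R$ with roots $\alpha_i$ (such reflections exist in $W$ for \emph{every} root, with no lattice constraint) and forms $v = w r_1 \cdots r_d$. Iterating Proposition~\ref{prop:move-add} gives $\mov(v) = \spn(\{\alpha_1,\ldots,\alpha_d\}) + \lambda + U = M$, a linear subspace, so $v$ is elliptic and $\lr(v) = \dim(M) = d+e$ by Proposition~\ref{prop:equal}; then $w = v r_d \cdots r_1$ yields $\lr(w) \leq (d+e) + d = 2d+e$. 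In short: where you try to absorb the ``offset'' $\lambda$ into a translation of $W$ (which forces a hard integrality condition), the paper absorbs it into an elliptic element by multiplying by $d$ well-chosen reflections, and the bound falls out of Carter-type counting. Your approach would become correct only if you could prove that $\lambda + U$ contains a coroot-lattice point of root dimension $d$; that statement is true, but in this paper it is a consequence of the theorem you are trying to prove, not an available ingredient.
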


\begin{proof}
  Let $w = t_\lambda u$ be the normal form of $w$ in $W$ for some
  identification of $W$ and $T \rtimes W_0$ and let $U = \mov(u)$. By
  Propositions~\ref{prop:translated move-set} and~\ref{prop:ell-recog}, 
  $\mov(w) = \lambda + U$, $\mov(w_e) = U$, and
  $e = \dim(U)$.  Let $m = \dim(w) = d+e$.  By definition, there
  exists an $m$-dimensional root subspace $M$ containing the affine
  subspace $\mov(w)$.  By Lemma~\ref{lem:sep}, $M$ must contain both
  $\lambda$ and $U$.  Since $M$ has a basis of roots, there exists a
  relative root basis $\{\alpha_1, \alpha_2, \ldots, \alpha_d\}$ for
  $M$ over $U$, i.e., $M = \spn(\{\alpha_1, \ldots, \alpha_d\}) \oplus
  U$ (and in particular, $\spn(\{\alpha_1, \ldots, \alpha_d\}) \cap U
  = \{0\}$).

  For each $i$, let $r_i$ be any reflection with root $\alpha_i$.
  Consider the element $v=w r_1 r_2 \cdots r_d$.  Iteratively applying
  Proposition~\ref{prop:move-add} one reflection at a time shows that
  $\mov(v) = \spn(\{\alpha_1, \ldots, \alpha_d\}) + \mov(w)$.  Since
  $\mov(w) = \lambda + U$, it follows that $\mov(v) = \spn(\{\alpha_1,
  \ldots, \alpha_d\}) + \lambda + U = \lambda + M = M$.  Since $M$ is
  a linear subspace of $V$, $v$ is elliptic and $\lr(v) = \dim(M) =
  d+e$.  Since $w = v r_d \cdots r_2 r_1$, $\lr(w) \leq \lr(v) + d =
  (d+e) + d = 2d+e$, as claimed.
\end{proof}

Together Propositions~\ref{prop:thmA-lb} and~\ref{prop:thmA-ub} prove
Theorem~\ref{main:dim}.

\subsection{Proof of Theorem~\ref{main:fact}}\label{subsec:B}

In this subsection, we prove Theorem~\ref{main:fact} using a recent
technical result about inefficient factorizations in spherical Coxeter
groups due to the first author and Vic Reiner
\cite[Corollary~1.4]{LewisReiner-16}.

\begin{prop}[Inefficient factorizations]\label{prop:inefficient}
  Let $W_0$ be a spherical Coxeter group and let $w$ be an element of
  $W_0$.  If $\lr(w) = \ell$, then every factorization of $w$ into $k$
  reflections lies in the Hurwitz orbit of some factorization $w = r_1
  r_2 \cdots r_k$ where $r_1 = r_2$, $r_3 = r_4$, \ldots $r_{k−\ell−1}
  = r_{k−\ell}$, and $r_{k−\ell+1} \cdots r_k$ is a minimum-length
  reflection factorization of $w$.
\end{prop}

We use this proposition about inefficient factorizations of an element
in a spherical Coxeter group to find a way to rewrite an efficient
factorization of an element in an affine Coxeter group into a
particular form.  This is our second main result.

\setcounter{mainthm}{\mainfact}
\begin{mainthm}[Factorization]
  Let $W$ be an affine Coxeter group.  For any element $w\in W$ there
  is a translation-elliptic factorization $w = t_\lambda u$ such that
  $\lr(t_\lambda) = 2d(w)$ and $\lr(u) = e(w)$.  
  In particular, $\lr(w) = \lr(t_\lambda) + \lr(u)$ for
  this factorization of $w$.
\end{mainthm}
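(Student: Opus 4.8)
The plan is to produce the desired factorization directly from an efficient reflection factorization of $w$ in $W$, transporting the problem through the projection $p$ into the spherical group $W_0$, where Proposition~\ref{prop:inefficient} supplies exactly the pairing structure we need. The key observation is that in any translation-elliptic factorization $w = t_\lambda u$ the elliptic part already satisfies $\lr(u) = \dim(w_e) = e(w)$ (since $p(u) = w_e$ and $u$ is elliptic), so all of the content lies in arranging that the translation part has the correct reflection length $2d$.

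First I would fix an efficient factorization $w = \rho_1 \rho_2 \cdots \rho_k$ with $k = \lr(w) = 2d + e$ (Theorem~\ref{main:dim}). Projecting through $p$, and using that $p$ sends each $r_{\alpha,j}$ to the reflection $r_\alpha$ (Definition~\ref{def:sub-quo}), I obtain a length-$k$ reflection factorization $w_e = p(\rho_1)\cdots p(\rho_k)$ of the elliptic part in $W_0$, where $\lr(w_e) = \dim(w_e) = e$ by Carter's lemma. Thus this is a length-$(2d+e)$ factorization of an element of reflection length $e$, so Proposition~\ref{prop:inefficient} applies with $\ell = e$ and $k - \ell = 2d$: the factorization lies in the Hurwitz orbit of one, $w_e = s_1 \cdots s_k$, in which $s_1 = s_2,\ \ldots,\ s_{2d-1} = s_{2d}$ and $s_{2d+1}\cdots s_{2d+e}$ is reduced.

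Next I would lift the relevant Hurwitz moves back upstairs. Since $p$ is a homomorphism, a Hurwitz move on a factorization of $w$ in $W$ projects to the corresponding Hurwitz move on the factorization of $w_e$ in $W_0$; hence I can apply the sequence of moves realizing the orbit equivalence downstairs to the word $(\rho_i)$ and obtain a new efficient factorization $w = \sigma_1 \cdots \sigma_k$ with $p(\sigma_i) = s_i$ for every $i$. Setting $t_\lambda \define \sigma_1 \cdots \sigma_{2d}$ and $u \define \sigma_{2d+1}\cdots\sigma_{2d+e}$ gives $w = t_\lambda u$. Because the first $2d$ of the $s_i$ cancel in consecutive pairs, $p(t_\lambda) = \id$, so $t_\lambda$ lies in $T$ and is a translation. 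Moreover the roots of $\sigma_{2d+1},\ldots,\sigma_{2d+e}$ point in the same directions as those of $s_{2d+1},\ldots,s_{2d+e}$, which are linearly independent because that block is reduced; so by Lemma~\ref{lem:red-ell} the element $u$ is elliptic with $\lr(u) = e = e(w)$, and $w = t_\lambda u$ is a genuine translation-elliptic factorization.

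Finally, $\lr(t_\lambda) \le 2d$ since $t_\lambda$ is exhibited as a product of $2d$ reflections, while the triangle inequality (Remark~\ref{rem:lr-basic}) together with Theorem~\ref{main:dim} gives $2d + e = \lr(w) \le \lr(t_\lambda) + \lr(u) = \lr(t_\lambda) + e$, forcing $\lr(t_\lambda) = 2d$. This yields $\lr(w) = 2d + e = \lr(t_\lambda) + \lr(u)$, as claimed. I expect the main obstacle to be the second step — producing the pairing structure on the projected factorization — and this is precisely the nontrivial content of the Reiner--Lewis result Proposition~\ref{prop:inefficient}; once that structure is in hand, the remaining points (that Hurwitz moves commute with $p$, and that the two blocks assemble into a translation and an elliptic of the right lengths) are routine.
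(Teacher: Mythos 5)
Your proof is correct and takes essentially the same route as the paper's: project an efficient reflection factorization of $w$ to $W_0$, invoke Proposition~\ref{prop:inefficient} to put it in paired-plus-reduced form, lift the Hurwitz moves back to $W$ via compatibility with $p$, split off the first $2d$ reflections as the translation part and the last $e$ as the elliptic part, and finish with Lemma~\ref{lem:red-ell} together with Theorem~\ref{main:dim} and the triangle inequality. The only cosmetic difference is that you certify $t_\lambda$ is a translation by noting $p(t_\lambda)=\id$, i.e.\ membership in $\ker p = T$, whereas the paper observes that each consecutive pair $\sigma_{2i-1}\sigma_{2i}$ is a product of reflections in parallel hyperplanes; these are equivalent.
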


\begin{proof}
  Let $k = 2d +e = \lr(w)$ and let $w = r_1' r_2' \cdots r_k'$ be a
  minimum-length reflection factorization of $w$.  The projection
  $p\colon W \onto W_0$ sends this efficient factorization of $w \in
  W$ to a (not necessarily efficient) factorization $w_e = p(w) =
  p(r_1') p(r_2') \cdots p(r_k')$ of $w_e \in W_0$.  By Carter's lemma
  (see Remark~\ref{rem:prior}) and Proposition~\ref{prop:ell-recog},
  minimum-length reflection factorizations of $w_e$ all have length $e
  = e(w)$.

  By Proposition~\ref{prop:inefficient}, there is a sequence of
  Hurwitz moves on the given $W_0$-factorization that produces a
  factorization of a special form.  The exact sequence of Hurwitz
  moves applied to the factorization in $W_0$ can be mimicked on the
  original factorization in $W$; since the Hurwitz action is easily
  seen to be compatible with $p$, the result is an $R$-factorization
  $w = r_1 r_2 \cdots r_k$ of $w$ such that $p(r_1) = p(r_2)$, $p(r_3)
  = p(r_4)$, \ldots, $p(r_{2d-1}) = p(r_{2d})$, and $p(r_{2d+1})
  \cdots p(r_k)$ is a minimum-length reflection factorization of
  $p(w)$.  This means that $r_1$ and $r_2$ are reflections through
  parallel hyperplanes, and so $t_{\lambda_1} = r_1 r_2$ is a
  (possibly trivial) translation. Similarly $t_{\lambda_i} = r_{2i -
    1} r_{2i}$ is a translation for $i = 1, \ldots, d$.  Thus the
  product of these first $2d$ reflections is a translation $t_\lambda
  = t_{\lambda_1} t_{\lambda_2} \cdots t_{\lambda_{d}}$ and
  $\lr(t_\lambda) \leq 2d$.

  Let $u = r_{2d+1} \cdots r_k$ be the product of the remaining
  $e$ reflections, so that $w = t_\lambda u$.  
  Because the given factorization of $u$ projects to a
  minimum-length reflection factorization of $u_e = w_e$ in $W_0$, the
  roots of $p(r_{2d+1})$, \ldots, $p(r_k)$ must be linearly
  independent (Lemma~\ref{lem:red-ell}), which means that the same is
  true for $r_{2d+1}$, \ldots, $r_k$ in $W$.  By Lemma~\ref{lem:red-ell},
  $u$ is elliptic and $\lr(u) = e$.  Finally, by Theorem~\ref{main:dim}
  and the triangle inequality (Remark~\ref{rem:lr-basic}), we must have 
  $\lr(t_\lambda) = 2d$.
\end{proof}

It is important to note that this theorem does \emph{not} assert that
there exists an identification of $W$ with $T \rtimes W_0$ such that
the corresponding normal form $w = t_\lambda u$ satisfies the
conclusion of the theorem.  In fact this stronger assertion is
demonstrably false.

\begin{example}[Normal forms are insufficient]\label{ex:insuff}
  In all irreducible affine Coxeter groups other than the affine
  symmetric groups, there is a maximal parabolic subgroup $W'$ that is not
  isomorphic to $W_0$.  Let $w$ be a Coxeter element for one of these
  alternative maximal parabolic subgroups. 
  
  Being a Coxeter element of a spherical Coxeter group, $w$ has a
  unique fixed point; denote it by $x'$. Reflections across the
  hyperplanes passing through $x'$ generate $W'$. On the other hand,
  reflections across the hyperplanes passing through any choice of
  origin used in the construction of $W$ (i.e., the $r_{\alpha,0}$
  reflecting across the $H_{\alpha,0}$ as in Definitions
  \ref{def:affine} and \ref{def:sub-quo}) must generate a group
  isomorphic to $W_0$. As $W'$ is not isomorphic to $W_0$, this means
  $x'$ cannot be the origin for any such identification of $W$ with $T
  \rtimes W_0$.
  
  Now fix an identification of $W$ and $T \rtimes W_0$ having origin
  $x\neq x'$, and consider the corresponding normal form $w =
  t_\lambda u$. By Remark~\ref{rem:max-ell}, $\lr(w) = \lr(u) =
  n$. However, since $w(x)\neq x = u(x)$, we have $t_{\lambda}\neq 1$,
  i.e., the normal form for $w$ includes a nontrivial translation.
  Thus, for this element $w$, $\lr(w) < \lr(t_\lambda) + \lr(u)$.
  
  Figure \ref{fig:B2notnormal} provides an illustration.  Here we see
  a portion of the affine line arrangement for $B_2$. The nodes in
  black correspond to all possible identifications of an origin to
  form the semidirect product.
  
  Let $r$ and $s$ be the reflections across the lines indicated in
  bold, and let $w=rs$. Let $x'$ denote the point of intersection for
  these lines. (The reflections $r$ and $s$ generate the maximal
  parabolic subgroup of type $A_1\times A_1$.) We have $\lr(w) =
  \lr(w_e)=2$. However, if we choose an origin $x\neq x'$ and write
  $w=t_{\lambda}u$ with respect to this origin, then the motion $u$
  will be a $\pi$-rotation about $x$ which requires at least $2$
  reflections on its own, i.e. $\lr(u) = 2$.  But since $x$ and $x'$
  are not equal and $w$ does not fix $x$, $t_\lambda$ cannot be
  trivial.  Thus $\lr(t_\lambda) >0$ and $\lr(w) < \lr(t_\lambda) +
  \lr(u)$.
\end{example}
  
\begin{figure}
  \begin{tikzpicture}[>=stealth,baseline=2.5cm,scale=1]
    \foreach \x in {1,...,10}{
      \draw (\x,.8)--(\x,6.2);
    }
    \foreach \x in {1,...,6}{
      \draw (.8,\x)--(10.2,\x);
    }
    \foreach \x in {1,...,3}{
      \draw (.8,2*\x+.2)--(2*\x+.2,.8);
      \draw (.8,2*\x-.2)--(7.2-2*\x,6.2);
      \draw (2*\x+3.8,.8)--(10.2,7.2-2*\x);
    }
    \foreach \x in {1,...,2}{
      \draw (2*\x+.8,6.2)--(2*\x+6.2,.8);
      \draw (2*\x+4.8,6.2)--(10.2,2*\x+.8);
      \draw (2*\x+5.2,6.2)--(2*\x-.2,.8);
    }
    \draw[very thick] (6,0)--(6,7) node[right] {$r$};
    \draw[very thick] (0,4)--(11,4) node[right] {$s$};
    \draw (6,4) node[inner sep=2,circle,draw=black,fill=white] {}
    node[inner sep = 1,xshift=8pt,yshift=8pt] {$x'$}; 
    \foreach \x in {1,...,5}{
      \foreach \y in {1,2,3}{
        \draw (2*\x-1,2*\y) node[circle,inner sep=2, fill=black] {};
        \draw (2*\x,2*\y-1) node[circle,inner sep=2, fill=black] {};
      }
    }
  \end{tikzpicture}
  \caption{The element $w=rs$ has $\ell_R(w) = \ell_R(w_e)=2$. The
    origin can only be identified with the black nodes, whereas $x'$
    is the unique fixed point of $w$. Thus every normal form for $w$
    has a nontrivial translation.}\label{fig:B2notnormal}
\end{figure}
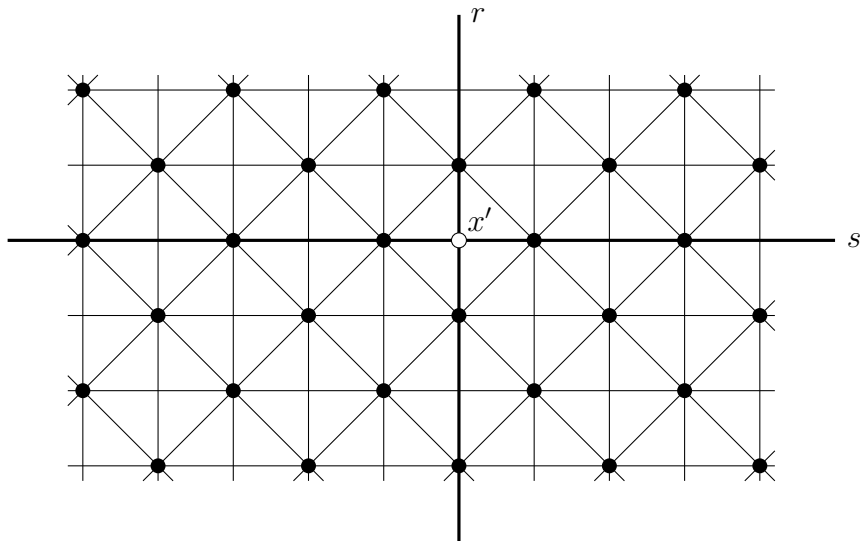

In an affine symmetric group, all maximal parabolic subgroups are
isomorphic and every vertex (maximal intersection of hyperplanes)
in the hyperplane arrangement can play the role of the origin in an
identification with a semidirect product.  This immediately
establishes the following result.

\begin{corollary}[Affine symmetric normal form]
  Let $W = \affS_n$ be the affine symmetric group.  For each element
  $w \in W$ there is an identification of $W$ and $T \rtimes W_0$ so
  that $w$ has normal form $w = t_\lambda u$ and $\lr(w) =
  \lr(t_\lambda) + \lr(u)$.
\end{corollary}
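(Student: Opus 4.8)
The plan is to begin from the factorization guaranteed by Theorem~\ref{main:fact}: there is a translation-elliptic factorization $w = t_\lambda u$ with $\lr(t_\lambda) = 2d(w)$, $\lr(u) = e(w)$, and $\lr(w) = \lr(t_\lambda) + \lr(u)$. The goal is to produce a single identification of $W$ with $T \rtimes W_0$ whose associated normal form for $w$ is \emph{exactly} this factorization. By Definition~\ref{def:semidirect}, such an identification is the same data as a choice of origin $x$ whose stabilizer in $W$ is a copy of $W_0$; the resulting normal form of $w$ is $w = t_\mu v$, where $v = i(w_e)$ is the unique elliptic element fixing $x$ that projects to $w_e$. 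Hence it suffices to find a valid origin $x$ that is fixed by the element $u$ from Theorem~\ref{main:fact}: once $x$ is fixed by $u$, both $u$ and $v$ fix $x$ and project to $w_e$, forcing $u = v$ and $t_\lambda = t_\mu$, so $w = t_\lambda u$ is literally the normal form and the length identity is inherited.

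First I would pin down the fixed set of $u$. In the Theorem~\ref{main:fact} factorization, $u = r_{2d+1} \cdots r_k$ is a product of $e$ reflections of $W$ whose roots are linearly independent, so by Lemma~\ref{lem:red-ell} the fixed set $\fix(u)$ is exactly the intersection $H_{2d+1} \cap \cdots \cap H_k$ of the corresponding affine hyperplanes. Thus $\fix(u)$ is a genuine flat of the affine arrangement, and the task reduces to locating a vertex of the arrangement inside this flat, together with the observation that in $\affS_n$ every vertex is a valid origin.

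To produce the vertex I would work in the type-A coordinate model $V = \{x \in \R^n : \sum_i x_i = 0\}$ with roots $e_i - e_j$ and hyperplanes $\{x_i - x_j = m\}$, noting that the vertices are precisely the points with all coordinate differences integral, and that the stabilizer of any such point is the full reflection group $\Sym_n = W_0$. The $e$ linearly independent roots defining $u$ form a forest on $\{1, \dots, n\}$ with $n - e$ trees; the defining equations $x_i - x_j = m_{ij}$ fix every intra-tree coordinate difference to be an integer and leave the overall position of each tree as a free parameter. Choosing all of these tree positions equal and then solving the single constraint $\sum_i x_i = 0$ makes every cross-tree difference integral as well, yielding a point $x \in \fix(u)$ at which all coordinate differences are integers, i.e., a vertex. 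Since this vertex is special, it is a valid origin, and the reduction of the first paragraph completes the argument.

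I expect the main obstacle to be precisely the step of placing a vertex inside $\fix(u)$: this is where the special feature of type A is indispensable, namely that the lattice structure permits integralizing all coordinate differences at once and that every vertex of the arrangement is special. In the other affine types this mechanism fails, which is exactly the phenomenon isolated in Example~\ref{ex:insuff}, where the fixed point of the elliptic part is forced to be a non-special vertex and so cannot be promoted to an origin for a semidirect-product identification.
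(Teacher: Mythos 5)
Your proposal is correct and takes essentially the same approach as the paper: the paper's proof is exactly to take the factorization $w = t_\lambda u$ from Theorem~\ref{main:fact} and choose as origin a vertex of the arrangement fixed by $u$, using the fact that in the affine symmetric group every vertex can serve as the origin of a semidirect-product identification. The details you supply beyond this --- that $\fix(u)$, being a flat cut out by hyperplanes with forest-like roots, contains a point with all coordinate differences integral (hence a vertex), and that fixing the origin forces your $u$ to coincide with the normal-form elliptic part --- are precisely what the paper's two-line proof leaves implicit.
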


\begin{proof}
  Let $w = t_\lambda u$ be the factorization from
  Theorem~\ref{main:fact}.  Choose an identification of $W$ and
  $T \rtimes W_0$ so that the role of the origin is played by one
  of the vertices that is fixed by $u$.
\end{proof}

\section{Local statistics}\label{sec:gen-fns}

In a spherical Coxeter group $W = W_0$, Shephard and Todd
\cite[Thm.~5.3]{ShephardTodd} showed that the generating function
\[
f_0(t) = \sum_{u \in W_0} t^{\ell_R(t)}
\]
has a particularly nice form:
\begin{equation}\label{eq:ST}
f_0(t) = \prod_{i = 1}^n (1 + e_i t),
\end{equation}
where the numbers $e_i$ are positive integers called the
\emph{exponents} of $W_0$.  In earlier work \cite{McPe-11}, the second
and third authors asked whether there are similarly nice generating
functions associated to an affine Coxeter group.  In this section, we
explore this question.

For an affine Coxeter group $W$, reflection length is bounded and
$|W|$ is infinite, so the naive generating function is not defined.  A
natural fix is to consider only a finite piece of $W$.

\begin{defn}[Local generating function]
  Given an element $\lambda$ of the coroot lattice $L(\Phi^\vee)$,
  define the bivariate generating function
  \[
  f_{\lambda}(s, t)
  = \sum_{u \in W_0} s^{d(t_\lambda u)} \cdot t^{e(t_\lambda  u)}
  = \sum_{u \in W_0} s^{\dim(t_\lambda  u)} \cdot (t/s)^{\dim(u)}
  \] 
  that tracks the statistics of differential and elliptic dimension.
  By Theorem~\ref{main:dim}, we have
  \[
  f_{\lambda}(t^2, t) = \sum_{u \in W_0} t^{\lr(t_\lambda \cdot u)}.
  \]
  By mild abuse of notation, we let $f_{\lambda}(t) =
  f_{\lambda}(t^2,t)$ denote this local reflection length generating
  function.
\end{defn}

The term ``local'' here makes sense geometrically, once the elements
of $W$ have been identified with alcoves in the reflecting hyperplane
arrangement for $W$ as in Remark \ref{rem:alcoves}. The alcoves
neighboring the origin (each of which is identified with a unique
element of $W_0$) form a $W_0$-invariant polytope $P$. The set $\{
t_\lambda u \,|\, u \in W_0\}$ corresponds to the set the of alcoves
in $t_\lambda \cdot P$, i.e., those alcoves neighboring $\lambda$. In
Figure \ref{fig:graylocal} we have shaded the alcoves according to
reflection length. In each example, the identity element is identified
with the black alcove, and lighter colored cells have greater
reflection length. The coroots are highlighted in white.

\begin{figure}
  \[
  \begin{array}{cc}
    \includegraphics[trim={2cm 2cm 3cm 2cm}, clip, scale=.75]{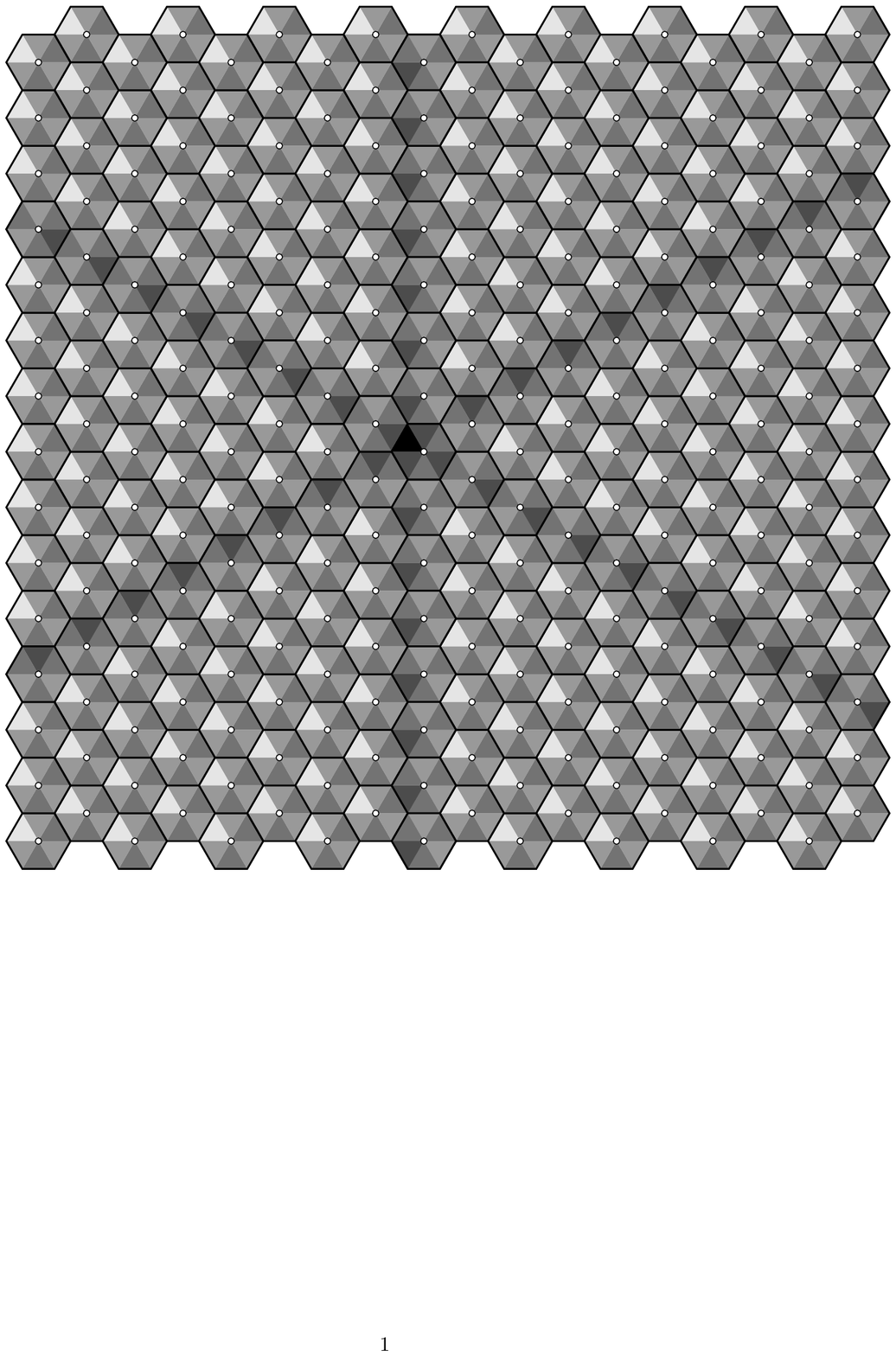}
    &
    \includegraphics[trim={2cm 3cm 2.5cm 3.5cm}, clip, scale=.8]{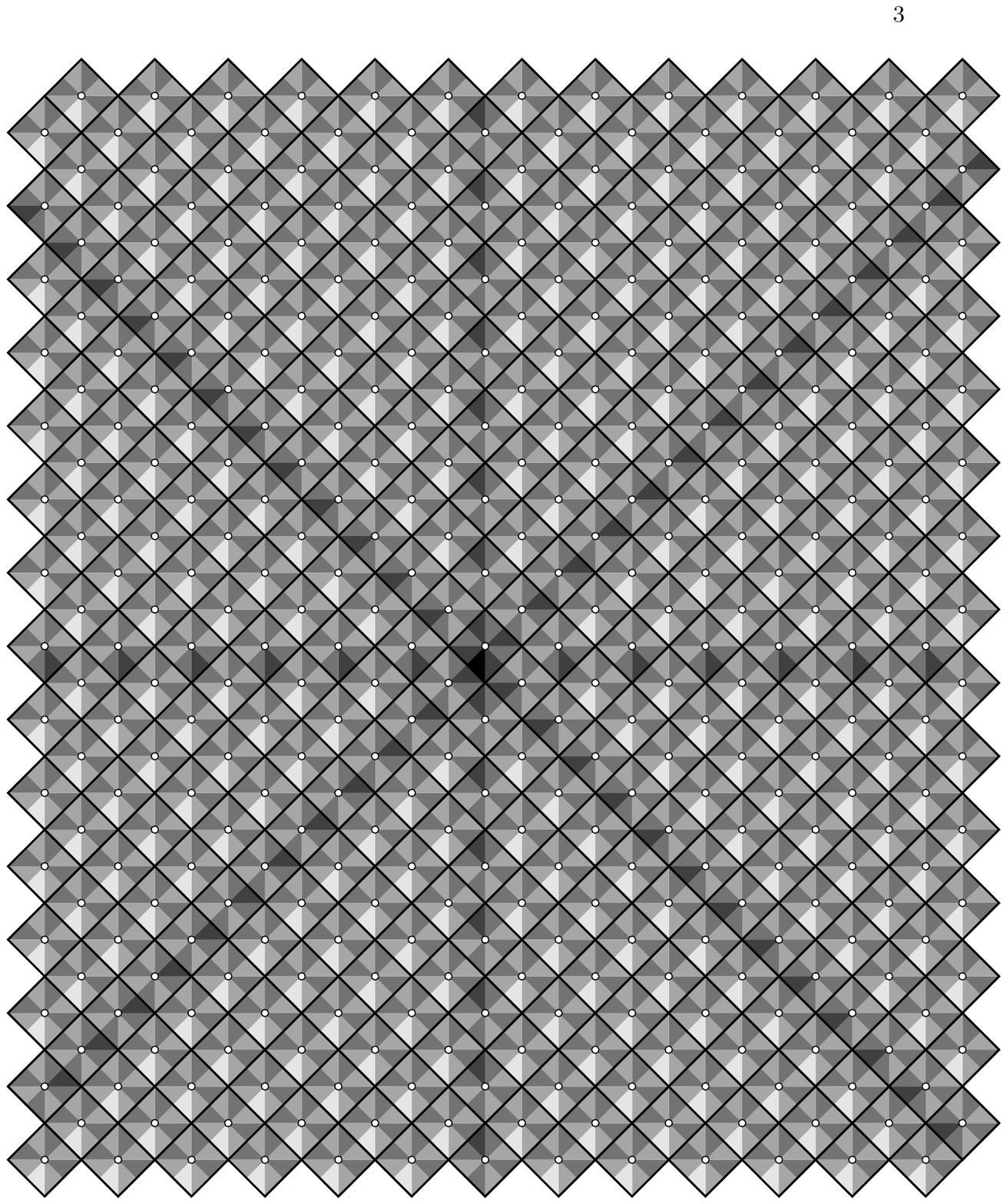}
    \\
    (a) & (b)
  \end{array}
  \]
  \[
  \begin{array}{c}
    \includegraphics[trim={2cm 2cm 3cm 2cm}, clip, scale=.75]{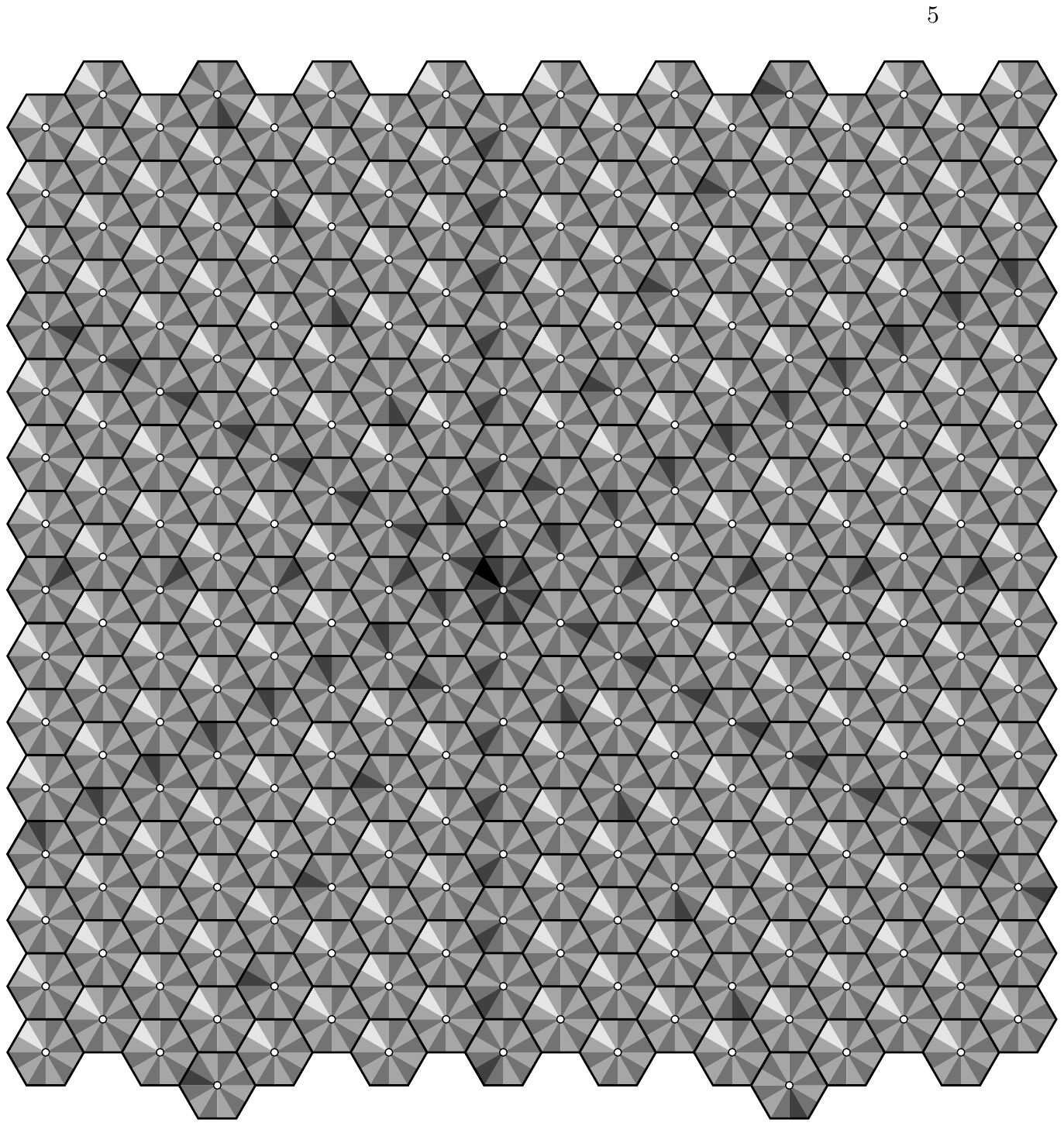}
    \\
    (c)
  \end{array}
  \]
  \caption{Reflection length in the affine Coxeter groups of (a) type
    $A_2$, (b) type $B_2$, and (c) type $G_2$. Lighter colors
    correspond to greater reflection length. White nodes correspond to
    translations.}\label{fig:graylocal}
\end{figure}

We collect here some easy facts about the local generating function.

\begin{prop}[Properties of local generating functions]\label{prop:simple gf facts}
  Let $\lambda$ be a vector in the coroot lattice $L(\Phi^\vee)$.
  \begin{enumerate}[label=(\roman*)]
  \item (The origin) If $\lambda=0$, then $f_{\lambda}(s, t) = f_0(t)$
    is the generating function for reflection length in $W_0$.
  \item (Generic) If $\lambda$ is generic, i.e., if it is not a member
    of any proper root subspace of $V$, then $f_\lambda(s, t) = s^{n}
    f_0(t/s) = \prod_{i = 1}^n (s + e_i t)$, where the $e_i$
    are the exponents of $W_0$.
  \item (Permutations) If $\lambda$ and $\lambda'$ belong to the same
    $W_0$-orbit (that is, there is some $w \in W_0$ such that
    $\lambda' = w(\lambda)$), then $f_{\lambda}(s, t) =
    f_{\lambda'}(s, t)$.
  \end{enumerate}
\end{prop}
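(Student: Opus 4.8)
The plan is to treat the three parts in increasing order of difficulty, in each case reducing the statement to understanding $\dim(t_\lambda u)$ for $u \in W_0$ and then feeding the answer into the second form of the generating function, $f_\lambda(s,t) = \sum_{u \in W_0} s^{\dim(t_\lambda u)} (t/s)^{\dim(u)}$. Throughout I would use that, under the fixed identification $W \cong T \rtimes W_0$, the element $u \in W_0$ is realized as an elliptic element fixing the origin whose move-set $\mov(u)$ is a linear root subspace (Lemma~\ref{lem:ell} and Lemma~\ref{lem:red-ell}), that $\mov(t_\lambda u) = \lambda + \mov(u)$ (Proposition~\ref{prop:translated move-set}), and that the elliptic part of $t_\lambda u$ is $u$ itself, so that $e(t_\lambda u) = \dim(u)$.

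For part (i), when $\lambda = 0$ the translation is trivial, so $t_\lambda u = u$ is elliptic, $\dim(t_\lambda u) = \dim(u)$, and $d(t_\lambda u) = 0$ for every $u$. Hence every term has $s$-exponent $0$ and $f_\lambda(s,t) = \sum_{u \in W_0} t^{\dim(u)}$ is independent of $s$; Carter's lemma (Remark~\ref{rem:prior}) gives $\dim(u) = \lr(u)$, so this sum is precisely the reflection-length generating function $f_0(t)$ of $W_0$. For part (ii), the key point is that a generic $\lambda$ lies in no proper root subspace of $V$, so the only root space containing it is $V$ itself. By Lemma~\ref{lem:sep}, any root space containing $\mov(t_\lambda u) = \lambda + \mov(u)$ must contain $\lambda$, hence must be all of $V$; therefore $\dim(t_\lambda u) = n$ for every $u \in W_0$. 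Substituting into the second form gives $f_\lambda(s,t) = s^n \sum_{u} (t/s)^{\dim(u)} = s^n f_0(t/s)$, and the product formula follows by inserting the Shephard--Todd factorization \eqref{eq:ST} into $f_0(t/s)$ and clearing denominators.

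Part (iii) is where the real content lies, and I expect it to be the main obstacle. The strategy is to show that $d$ and $e$ are invariant under conjugation in $W$ and that conjugating $t_\lambda$ by the image of $w \in W_0$ produces $t_{w(\lambda)} = t_{\lambda'}$. For the invariance I would first record the transformation rule $\mov(gvg^{-1}) = \bar g(\mov(v))$, where $\bar g = p(g)$ is the linear part of $g$: tracing a point through $gvg^{-1}$ shows that its motion is $\bar g$ applied to a motion of $v$. Since $\bar g \in W_0$ permutes the roots and hence the root-space arrangement, it preserves root dimension, giving $\dim(gvg^{-1}) = \dim(v)$ and, after projecting, $e(gvg^{-1}) = e(v)$; thus both $d$ and $e$ are class functions on $W$. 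Combining this with the identity $i(w)\,(t_\lambda u)\,i(w)^{-1} = t_{\lambda'}\,(w u w^{-1})$ (using that $i$ is a homomorphic section and that conjugating a translation by an isometry applies its linear part to the translation vector), the bijection $u \mapsto w u w^{-1}$ of $W_0$ matches each term of $f_\lambda$ with an equal term of $f_{\lambda'}$, so the two generating functions agree. The one point requiring care is checking that this conjugation sends the summand indexed by $u$ to the summand indexed by $wuw^{-1}$ while preserving \emph{both} exponents simultaneously; once the transformation rule for move-sets is in hand, everything else is bookkeeping.
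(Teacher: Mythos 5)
Your proposal is correct and takes essentially the same approach as the paper: part (i) follows from the definitions, part (ii) rests on showing $\dim(t_\lambda u) = n$ for every $u$ because any root subspace containing $\mov(t_\lambda u)$ must contain the generic vector $\lambda$ and hence equal $V$, and part (iii) uses the conjugation identity $t_{\lambda'}(wuw^{-1}) = w(t_\lambda u)w^{-1}$ together with the fact that conjugation by elements of $W_0$ preserves root dimensions to reindex the sum. The only cosmetic differences are that in (ii) you route through Lemma~\ref{lem:sep} where the paper simply notes $\lambda \in \mov(t_\lambda u)$ (since $\mov(u)$ contains the origin), and in (iii) you conjugate from $f_\lambda$ to $f_{\lambda'}$ rather than the reverse.
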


\begin{proof}
  If $\lambda = 0$ then $t_\lambda$ is the identity, and so (i)
  follows immediately from the definitions.

  For (ii), since $u$ is elliptic we have $\lambda \in \mov(t_\lambda
  u)$.  Thus, since $\lambda$ is generic, $\mov(t_\lambda u)$ is not
  contained in any proper root subspace of $V$, and so $\dim(t_\lambda
  u) = n$.  Substituting this into the definition of $f_\lambda$ gives
  the first equality.  The second equality follows from Shephard and 
  Todd's result.

  For (iii), suppose that $\lambda' = w(\lambda)$.  Then $t_{\lambda'}
  = w t_{\lambda} w^{-1}$ and so for every $u$ in $W_0$ we have
  $t_{\lambda'}u = w( t_\lambda (w^{-1} u w)) w^{-1}$.  Conjugation of
  group elements by $w$ acts on move-sets as multiplication by $w$.
  Since $w \in W_0$, this preserves root dimensions, and so
  $\dim(t_{\lambda'} u) = \dim(t_{\lambda} (w^{-1}uw))$.  Finally, as
  $u$ runs over the spherical group $W_0$, $w^{-1}uw$ does as well,
  and $\dim(w^{-1}uw) = \dim(u)$, so the joint distribution of
  dimensions is the same over both sets.
\end{proof}

To get a feel for the local generating functions, Table
\ref{tab:rank2gf} lists the different local generating functions for
types $A_2$, $B_2$, and $G_2$. In Figure~\ref{fig:colorlocal} we
revisit the alcove picture, but instead of individual alcoves, we have
colored the translates $t_\lambda \cdot P$ according to
$f_\lambda(t)$. In Figure \ref{fig:faces} we see affine $A_3$ with the
translates $t_\lambda\cdot P$ colored according to $f_\lambda(t)$. In
this case there are six different local generating functions, as
listed in Table \ref{tab:A3gf}. (Only five are visible in Figure
\ref{fig:faces} as the origin is hidden.) The important thing to know
about the notation in Table \ref{tab:A3gf} is that $\alpha_1$ and
$\alpha_3$ are orthogonal to each other, whereas neither is orthogonal
to $\alpha_2$.

From the pictures it appears that the local generating functions line
up along faces of a hyperplane arrangement. The faces of the
arrangement are intersections of maximal root subspaces, and if two
coroots lie in the same face, they have the same local generating
function. However, note that while some of these intersections are
themselves root subspaces, not all of them are. For example, the white
balls in Figure \ref{fig:faces} lie along lines that are not of the
form $\R\alpha$ for any root $\alpha$.

\begin{table}[t]
  \[
  \begin{array}{c}
    A_2 \\
    \hline
    \begin{array}{cc|c}
      & \lambda & f_\lambda(s,t)\\
      \hline
      \begin{tikzpicture} \draw(0,0) node[circle,inner sep=2, fill=blue,draw=black] {}; \end{tikzpicture} & 0
      & (1+t)(1+2t) \\
      \begin{tikzpicture} \draw(0,0) node[circle,inner sep=2, fill=white!60!blue,draw=black] {}; \end{tikzpicture}  & \alpha^{\vee} & (s+t)(1+2t) \\
      \begin{tikzpicture} \draw(0,0) node[circle,inner sep=2, fill=white!90!blue,draw=black] {}; \end{tikzpicture}  & \text{generic} & (s+t)(s+2t)
    \end{array}
  \end{array}
  \hspace{.5cm}
  \begin{array}{c}
    B_2 \\
    \hline
    \begin{array}{cc|c}
      & \lambda & f_\lambda(s,t)\\
      \hline
      \begin{tikzpicture} \draw(0,0) node[circle,inner sep=2, fill=blue,draw=black] {}; \end{tikzpicture} & 0 & (1+t)(1+3t) \\
      \begin{tikzpicture} \draw(0,0) node[circle,inner sep=2, fill=white!60!blue,draw=black] {}; \end{tikzpicture} & \alpha^{\vee} & (s+t)(1+3t) \\
      \begin{tikzpicture} \draw(0,0) node[circle,inner sep=2, fill=white!90!blue,draw=black] {}; \end{tikzpicture} & \text{generic} & (s+t)(s+3t)
    \end{array}
  \end{array}
  \]
  \[
  \begin{array}{c}
    G_2 \\
    \hline
    \begin{array}{cc|c}
      &\lambda & f_\lambda(s,t)\\
      \hline
      \begin{tikzpicture} \draw(0,0) node[circle,inner sep=2, fill=blue,draw=black] {}; \end{tikzpicture} & 0 & (1+t)(1+5t) \\
      \begin{tikzpicture} \draw(0,0) node[circle,inner sep=2, fill=white!60!blue,draw=black] {}; \end{tikzpicture} & \alpha^{\vee} & (s+t)(1+5t) \\
      \begin{tikzpicture} \draw(0,0) node[circle,inner sep=2, fill=white!90!blue,draw=black] {}; \end{tikzpicture} & \text{generic} & (s+t)(s+5t)
    \end{array}
  \end{array}
  \]
  \caption{Local generating functions for affine $A_2$, $B_2$, and $G_2$.}\label{tab:rank2gf}
\end{table}

\begin{figure}
  \[
  \begin{array}{cc}
    \includegraphics[scale=.5]{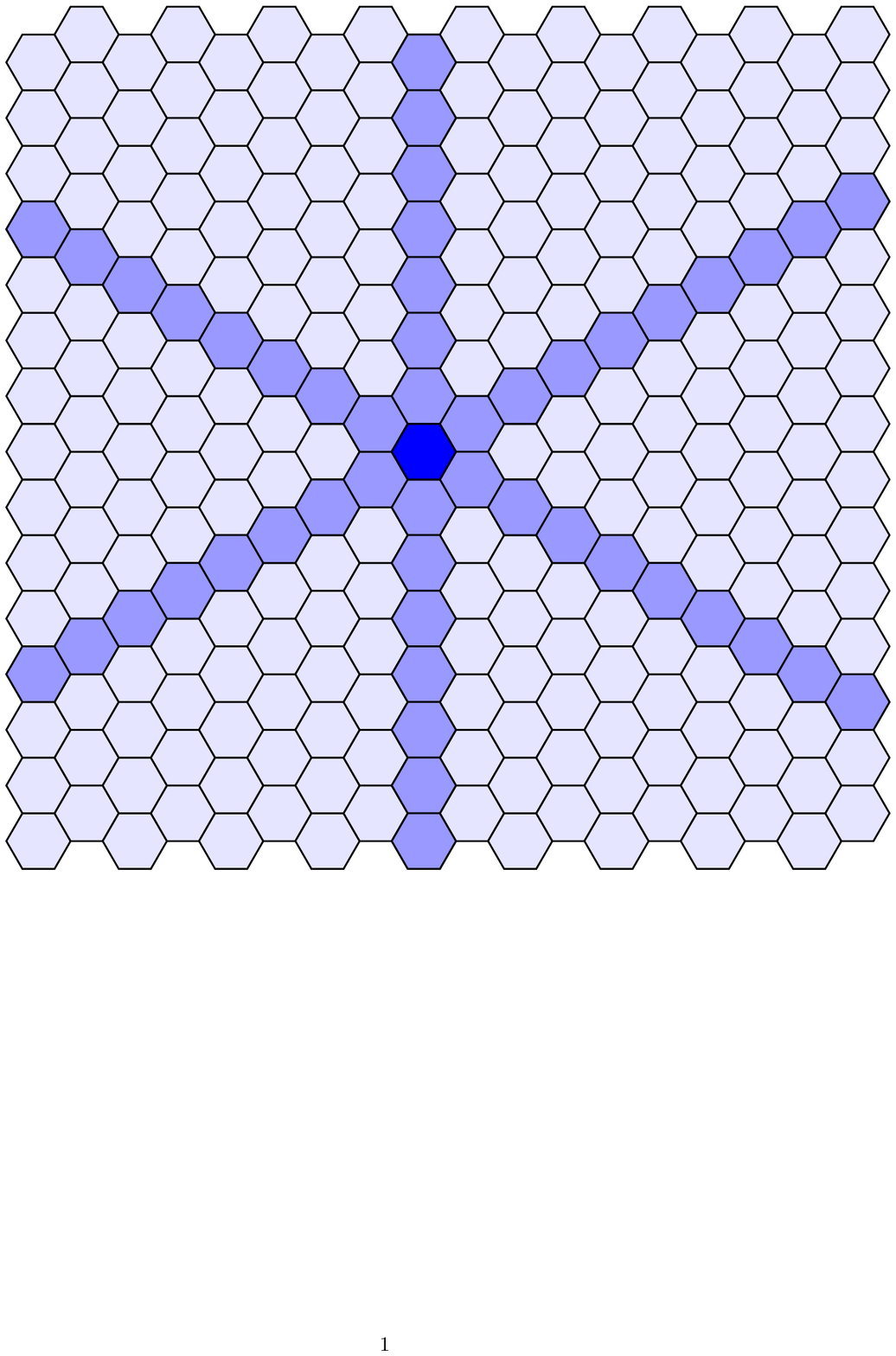}
    &
    \includegraphics[scale=.5]{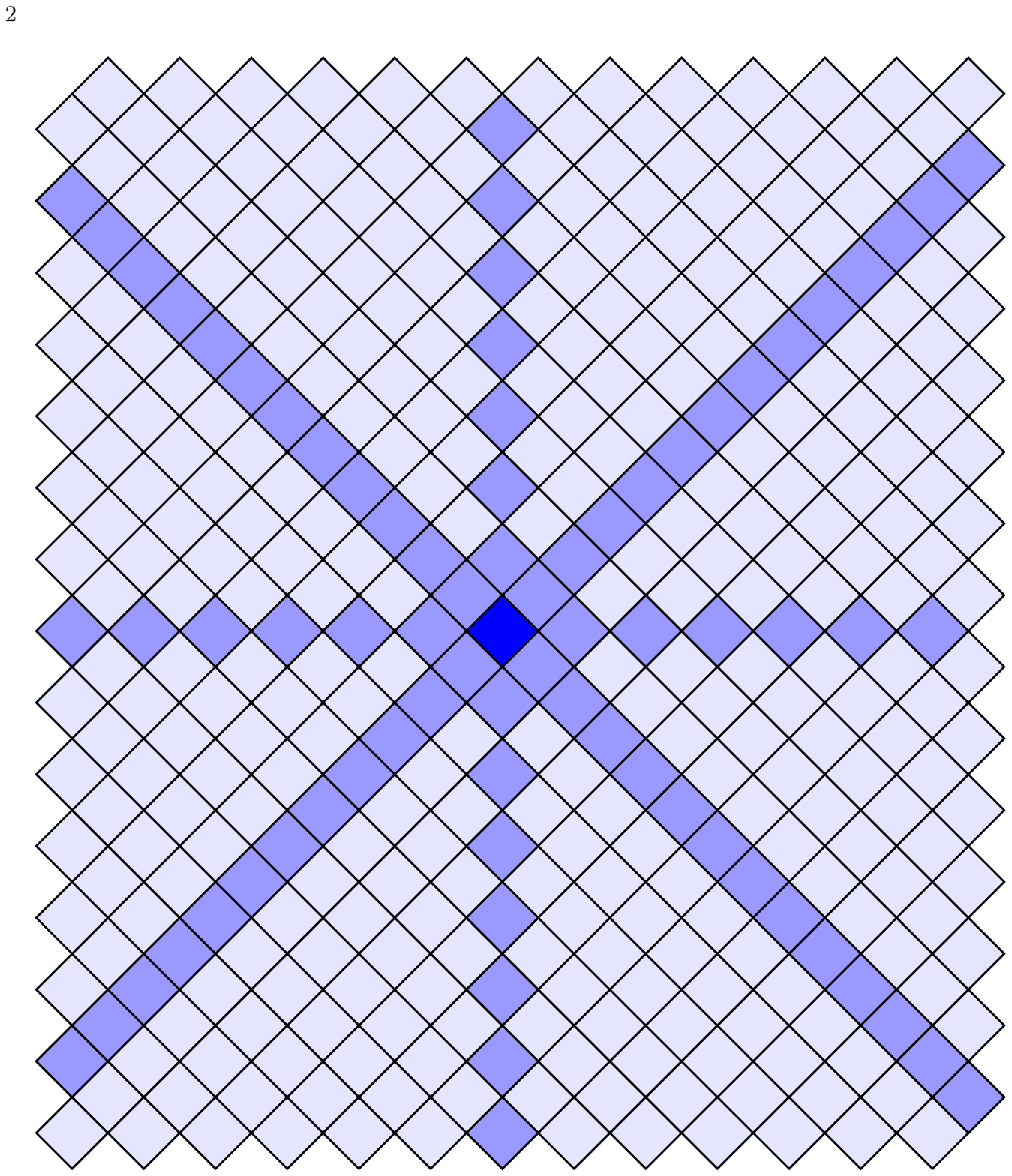}
    \\
    (a) & (b)
  \end{array}
  \]
  \[
  \begin{array}{c}
    \includegraphics[scale=.5]{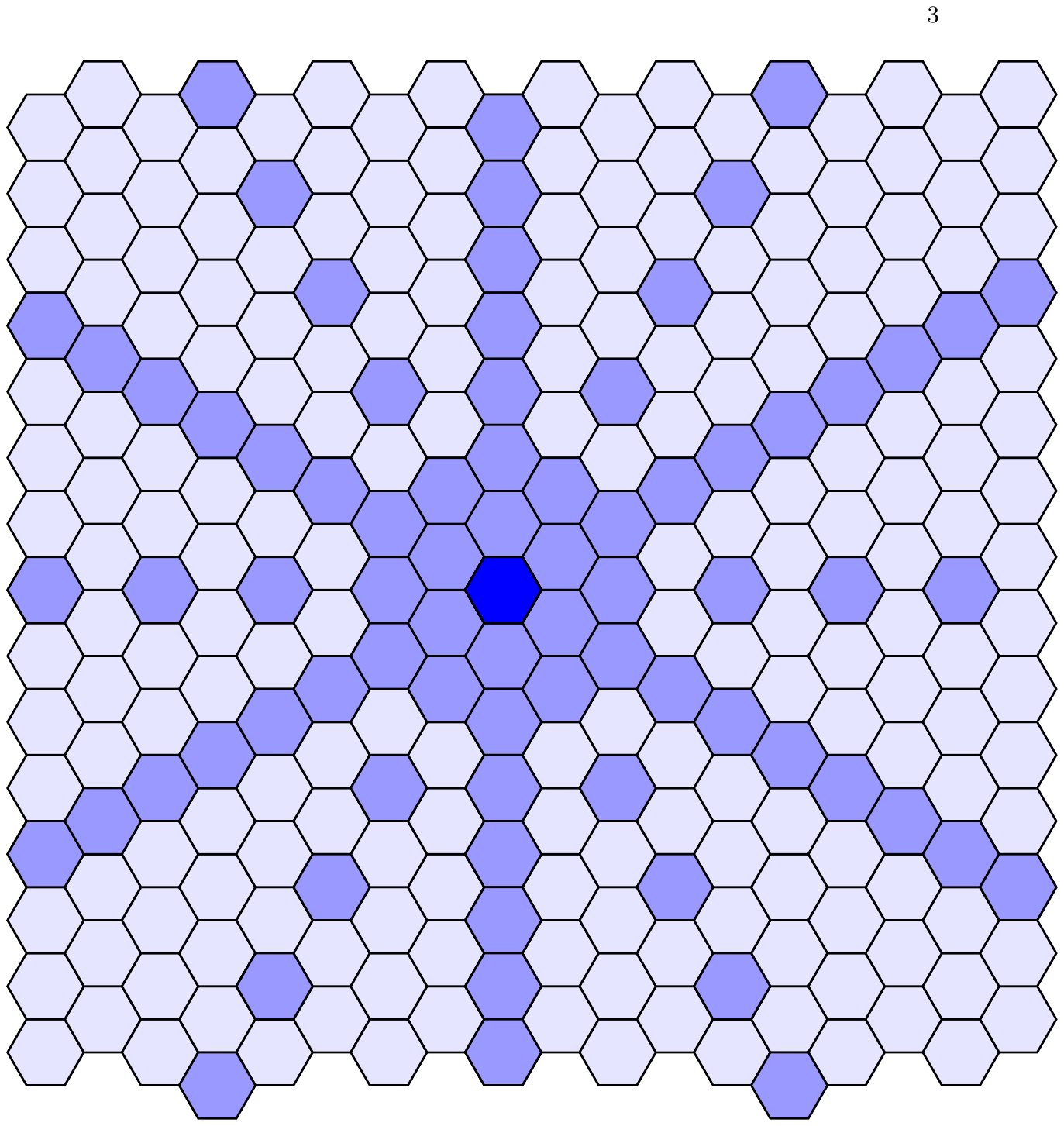}
    \\
    (c)
  \end{array}
  \]
  \caption{The translates $t_\lambda\cdot P$ in (a) type $A_2$, (b)
    type $B_2$, and (c) type $C_2$, colored according to the local
    distribution of reflection length.}\label{fig:colorlocal}
\end{figure}

\begin{figure}
  \begin{center}
    \includegraphics[scale=.7]{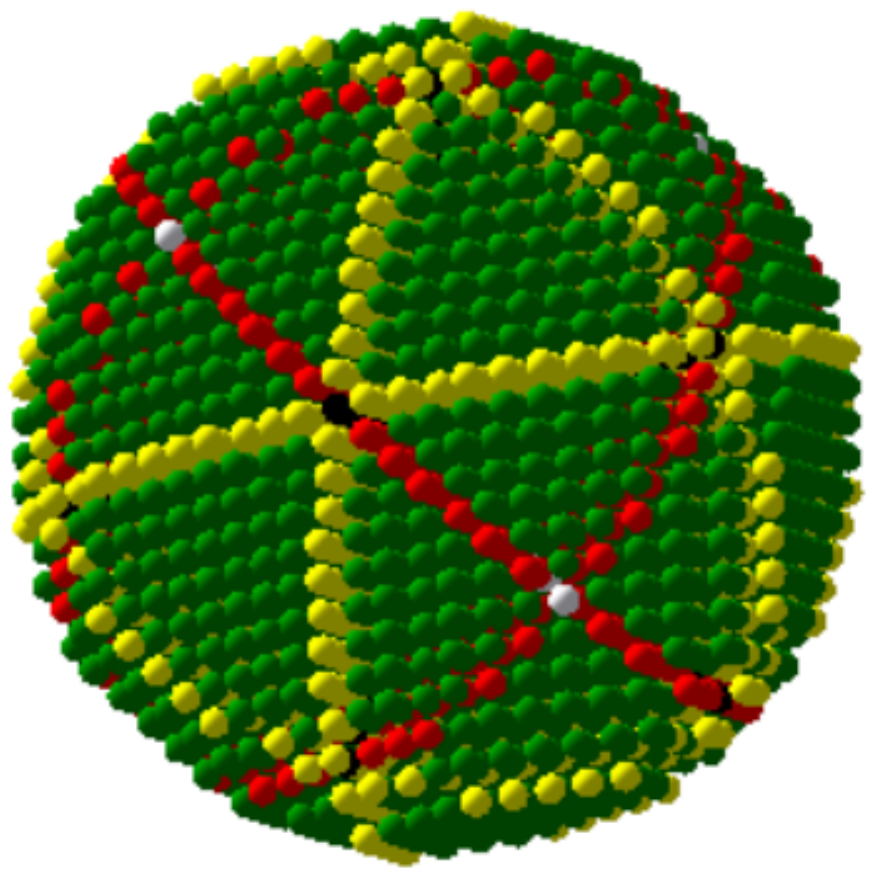}
  \end{center}
  \caption{The translates $t_\lambda\cdot P$ in type $A_3$ colored
    according to the local distribution of reflection length.}
  \label{fig:faces}
\end{figure}

\begin{table}[t]
  \[
  \begin{array}{c}
    A_3 \\
    \hline
    \begin{array}{cc|c}
      &\lambda & f_\lambda(s,t)\\
      \hline
      \begin{tikzpicture} \draw(0,0) node[circle,inner sep=2, fill=orange,draw=black] {}; \end{tikzpicture} & 0 & (1+t)(1+2t)(1+3t) \\
      \begin{tikzpicture} \draw(0,0) node[circle,inner sep=2, fill=black,draw=black] {}; \end{tikzpicture} & \alpha^{\vee} & (s+t)(1+2t)(1+3t) \\
      \begin{tikzpicture} \draw(0,0) node[circle,inner sep=2, fill=white!90!black, draw=black] {}; \end{tikzpicture} &  \alpha_1^\vee + \alpha_3^\vee & 2t^2 + 6t^3 + 4st+9st^2 + s^2+2s^2t \\
      \begin{tikzpicture} \draw(0,0) node[circle,inner sep=2, fill=yellow,draw=black] {}; \end{tikzpicture} & \text{generic span of } \alpha_1^{\vee}, \alpha_2^{\vee} & (s+t)(s+2t)(1+3t) \\
      \begin{tikzpicture} \draw(0,0) node[circle,inner sep=2, fill=red,draw=black] {}; \end{tikzpicture} & \text{generic span of } \alpha_1^{\vee}, \alpha_3^{\vee} & (s+t)(t+6t^2+s+4st) \\
      \begin{tikzpicture} \draw(0,0) node[circle,inner sep=2, fill=green!60!black,draw=black] {}; \end{tikzpicture} & \text{generic} & (s+t)(s+2t)(s+3t) \\
    \end{array}
  \end{array}
  \]
  \caption{Local generating functions for affine $A_3$. Here
    $\alpha_1$ and $\alpha_3$ are any two orthogonal roots, while
    $\alpha_2$ and $\alpha_1$ are not orthogonal.}\label{tab:A3gf}
\end{table}

We make the phenomenon precise here.

\begin{thm}[Equality of local generating functions]\label{thm:faces}
  Suppose that $\lambda$ and $\mu$ are two elements of the coroot
  lattice $L$.  Suppose furthermore that $\lambda$
  and $\mu$ belong to the same collection of root subspaces.
  Then $f_\lambda(s, t) = f_\mu(s, t)$.
\end{thm}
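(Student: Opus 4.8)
The plan is to prove the stronger \emph{termwise} statement that, for every fixed $u \in W_0$, the contribution of $u$ to $f_\lambda$ equals its contribution to $f_\mu$. Using the second form of the definition,
\[
f_\lambda(s, t) = \sum_{u \in W_0} s^{\dim(t_\lambda u)} \cdot (t/s)^{\dim(u)},
\]
the exponent $\dim(u)$ does not involve $\lambda$ at all, so it suffices to show that $\dim(t_\lambda u) = \dim(t_\mu u)$ for every $u \in W_0$. Granting this, the two sums agree monomial by monomial and the theorem follows.

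First I would translate $\dim(t_\lambda u)$ into a statement purely about the root space arrangement. Set $U = \mov(u)$, which is a root subspace by Lemma~\ref{lem:red-ell}. By Proposition~\ref{prop:translated move-set} we have $\mov(t_\lambda u) = \lambda + U$, so by Definition~\ref{def:dim-elt} the quantity $\dim(t_\lambda u) = \dim_\Phi(\lambda + U)$ is the minimal dimension of a root subspace containing $\lambda + U$. Applying Lemma~\ref{lem:sep}, a root subspace $M$ contains $\lambda + U$ if and only if it contains both $U$ and $\lambda$, and hence
\[
\dim(t_\lambda u) = \min \{ \dim(M) \mid M \in \arr(\Phi),\ U \subseteq M,\ \lambda \in M \}.
\]

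The key observation is then immediate from the hypothesis. To say that $\lambda$ and $\mu$ belong to the same collection of root subspaces means precisely that $\lambda \in M \iff \mu \in M$ for every $M \in \arr(\Phi)$. Restricting attention to those $M$ that also contain the fixed subspace $U$, the constraint $\lambda \in M$ is therefore equivalent to $\mu \in M$, so the two minimization problems above range over \emph{identical} sets of root subspaces. Their minima thus coincide, giving $\dim(t_\lambda u) = \dim(t_\mu u)$ as required. Summing over $u \in W_0$ yields $f_\lambda(s, t) = f_\mu(s, t)$.

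I do not expect a genuine obstacle here: once $\dim(t_\lambda u)$ is rewritten as a minimum over root subspaces containing $U$ and $\lambda$, the desired equality is forced by the membership hypothesis. The only point requiring a little care is the reduction via Lemma~\ref{lem:sep}, together with the observation that the elliptic-dimension part of each exponent, namely $\dim(u)$, is manifestly independent of the translation vector; this concentrates all of the $\lambda$-dependence in $\dim(t_\lambda u)$, which in turn is controlled entirely by which root subspaces contain $\lambda$.
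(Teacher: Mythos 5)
Your proof is correct and follows essentially the same route as the paper's: both establish the stronger termwise statement $\dim(t_\lambda u) = \dim(t_\mu u)$ for every $u \in W_0$ by using Lemma~\ref{lem:sep} to reduce containment of $\mov(t_\lambda u) = \lambda + \mov(u)$ to containment of $\lambda$ and $\mov(u)$ separately, and then invoking the membership hypothesis. Your write-up is somewhat more explicit (spelling out the minimization and the role of Proposition~\ref{prop:translated move-set}), but the underlying argument is identical.
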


\begin{proof}
  In fact, something stronger is true: if $\lambda$ and $\mu$ are as
  described, then $\dim(t_\lambda u) = \dim(t_\mu u)$ for every $u$ in
  $W_0$.  Indeed, by Lemma~\ref{lem:sep}, for $u \in W_0$, a root
  subspace $U$ contains $\mov(t_\lambda u)$ if and only if it contains
  $\lambda$ and $\mov(u)$. But by hypothesis, every such $U$ contains
  $\mu$ and $\mov(u)$, and so contains $\mov(t_\mu u)$.  Then the
  result follows immediately from the definition of $f_\lambda$.
\end{proof}

Unfortunately, while Theorem \ref{thm:faces} and Proposition
\ref{prop:simple gf facts} imply bounds on the number of local
generating functions in terms of the number of $W_0$-orbits of
intersections of root subspaces, it is probably intractable to compute
all $f_\lambda(s,t)$, or even all $f_\lambda(t)$, in general. We show
in Appendix \ref{sec:null-comp} that computing $d(t_\lambda)$ for an
element $\lambda$ of the type $A_n$ coroot lattice is essentially
equivalent to the NP-complete problem \texttt{SubsetSum}.

A different approach to understanding the distribution of reflection
length would be to introduce a new statistic that grows with $\lambda$
and take a bivariate generating function, either over the whole group
$W$ or over the elements with fixed elliptic part (that is, over a
coset of the translations).  Thus, for a given element $u \in W_0$ one
could consider the generating function
\[
g_{u}(q,t) = \sum_{\lambda \in L} t^{d(t_{\lambda} u)}q^{\operatorname{stat}(\lambda, u)}
\]
for some statistic ``$\operatorname{stat}$'', e.g., usual the Coxeter
length of $t_\lambda$. By Theorem~\ref{main:dim}, the corresponding
generating function for reflection length is $t^{\dim(u)} g_u(q,
t^2)$.

If $u$ has maximal reflection length in $W_0$, e.g., if $u$ is a
Coxeter element, then by Remark \ref{rem:max-ell} we have
$d(t_{\lambda} u)=0$, and this generating function simplifies to
$\sum_{\lambda \in L} q^{\operatorname{stat}(\lambda, u)}$. However,
at this point we have no good candidate for
$\operatorname{stat}(\lambda, u)$ and have made little progress in
this direction.

\section{Affine symmetric groups}\label{sec:aff-sym}

In this section, we restrict our attention to the affine symmetric
groups and give simple combinatorial descriptions of the statistics
$d(w)$ and $e(w)$ used to define reflection length.  This provides an
affine analog of the formula for the symmetric group given by D\'enes
\cite{Denes59}.

Throughout this part of the article, we can safely relax our
distinction between ``points'' in euclidean space and ``vectors'' in a
vector space.  To this end, fix a euclidean vector space $\R^n$ with a
fixed ordered orthonormal basis $e_1, e_2,\ldots, e_n$.

\subsection{Permutations and affine permutations}\label{sec:perms}

In this section, we describe the symmetric group and affine symmetric
group from both the geometric and combinatorial perspectives.

\begin{defn}[Permutations]\label{def:perms}
  A \emph{permutation} of the set $[n]$ is a bijection $\pi\colon [n]
  \to [n]$ and the set of all permutations under composition is the
  \emph{symmetric group} $\Sym_n$.  A permutation $\pi$ may be
  represented in \emph{one line notation} as the sequence $[\pi(1),
    \ldots, \pi(n)]$ of its values.
\end{defn}

\begin{defn}[Root system]\label{def:affS-roots}
  For every permutation $\pi \in \Sym_n$, there is an isometry
  $u_\pi\colon \R^n \to \R^n$ that sends $(v_1, \ldots, v_n)$ to
  $(v_{\pi(1)}, \ldots, v_{\pi(n)})$.  All of these isometries fix the
  line $\R \one$ in direction $\one = (1, \ldots, 1)$.  Let $V$ be the
  orthogonal complement of this line, consisting of all vectors whose
  coordinate sum is $0$.  Then $\Sym_n$ is a spherical Coxeter group
  acting on $V$, as in Definition~\ref{def:spherical coxeter group}.
  The reflections are exactly the \emph{transpositions} that
  interchange two values $i, j \in [n]$ while fixing all others.  For
  the reflection that switches the $i$-th and $j$-th coordinates we
  select the vectors $\pm(e_i - e_j)$ as its roots, and we denote by
  $\Phi_n$ the full root system $\Phi_{n} = \{ e_i -e_j \mid i, j \in
  [n], i\neq j\}$.
\end{defn}

We record a few elementary properties of $\Phi_n$ without proof.

\begin{prop}[Coroots]\label{prop:coroots-affS}
  Let $\Phi_n$ be the root system for $\Sym_n$. Roots are the same as
  coroots, the root system spans $V$, and the lattice of (co)roots is
  the set of vectors in $V$ with integer coordinates.  In symbols,
  $\Phi_n = (\Phi_n)^\vee$, $\spn(\Phi_n) = V$ and $L(\Phi_n)
  = V \cap \Z^n$.
\end{prop}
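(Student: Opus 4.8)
The plan is to dispatch the three assertions in turn, each of which reduces to a short computation with the explicit roots $e_i - e_j$ furnished by Definition~\ref{def:affS-roots}. The only structural input I need is that the ordered basis $e_1, \ldots, e_n$ is orthonormal, so that inner products of roots can be read off by inspection.

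First I would verify that $\Phi_n = (\Phi_n)^\vee$. By Definition~\ref{def:coroots}, the coroot of a root $\alpha$ is $\alpha^\vee = \frac{2}{\langle \alpha, \alpha\rangle}\alpha$, so it suffices to check that every root has squared length $2$. For $\alpha = e_i - e_j$ with $i \neq j$, orthonormality gives $\langle \alpha, \alpha \rangle = \langle e_i, e_i\rangle + \langle e_j, e_j \rangle = 2$, the cross terms vanishing, whence $\alpha^\vee = \alpha$. Thus each root is its own coroot and the two systems coincide.

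Next I would establish $\spn(\Phi_n) = V$. Each root $e_i - e_j$ has coordinate sum zero, so $\spn(\Phi_n) \subseteq V$. For the reverse containment it is enough to exhibit $\dim V = n-1$ independent roots: the consecutive differences $e_1 - e_2, e_2 - e_3, \ldots, e_{n-1} - e_n$ are visibly linearly independent, since their leading nonzero entries occupy distinct coordinates, and there are $n-1$ of them, so they form a basis of $V$.

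Finally, for $L(\Phi_n) = V \cap \Z^n$, the forward inclusion is immediate, since each root lies in $V \cap \Z^n$ and the latter is closed under addition. The reverse inclusion is the one step that calls for an argument: given $v = (v_1, \ldots, v_n) \in V \cap \Z^n$, I would write it explicitly as an integer combination of the consecutive-difference roots via telescoping partial sums. Setting $c_i = v_1 + \cdots + v_i$ for $1 \le i \le n-1$, each $c_i$ is an integer, and a coordinate-by-coordinate check gives $v = \sum_{i=1}^{n-1} c_i (e_i - e_{i+1})$; the coordinate-sum hypothesis $\sum_j v_j = 0$ is exactly what makes the last coordinate come out to $v_n$. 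Hence $v \in L(\Phi_n)$. This verification — recognizing the right integer coefficients and invoking the zero-sum condition — is the only place any care is needed, and even there the computation is entirely elementary.
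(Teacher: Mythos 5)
Your proof is correct. The paper records this proposition explicitly \emph{without} proof (the authors deem these properties of $\Phi_n$ elementary), and your three verifications --- the squared-length computation $\langle e_i - e_j, e_i - e_j\rangle = 2$ giving $\alpha^\vee = \alpha$, the dimension count via the $n-1$ independent roots $e_i - e_{i+1}$, and the telescoping coefficients $c_i = v_1 + \cdots + v_i$ with the zero-sum condition handling the last coordinate --- are precisely the standard checks the authors leave to the reader.
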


Following Definition~\ref{def:affine}, one can use the root system
$\Phi_n$ to construct the \emph{affine symmetric group} $\affS_n$ as a
group of isometries of $V$.  By Definition~\ref{def:sub-quo}, every
\emph{affine permutation} $w\in \affS_n$ can be written as $w =
t_\lambda u_\pi$ where $u_\pi$ is the elliptic isometry of $\R^n$
indexed by a permutation $\pi \in \Sym_n$ and $t_\lambda$ is a
translation by a vector $\lambda$ in the $\Z$-span of $\Phi_n$, i.e.,
a vector with integer coordinates and coordinate sum $0$.  There is
also an alternative description of affine permutations, due originally
to Lusztig \cite{Lusztig83}, that is common in the combinatorics
literature (see, e.g., \cite{Eriksson98}).

\begin{defn}[Affine permutations as bijections]\label{rem:affS-comb}
  Let $w\colon \Z \to \Z$ be a bijection from the integers to the
  integers.  We call the map $w$ an \emph{affine permutation} of
  \emph{order} $n$ if it satisfies three conditions:
  \begin{enumerate}
    \item  $w(i+n)=n+w(i)$ for all $i \in \Z$,
    \item  $w(i) = w(j) \mod n$ if and only if $i = j \mod n$, and   
    \item $w(1) + w(2) + \cdots + w(n) = 1 + 2 + \cdots + n$.
  \end{enumerate}
  By the first condition, such periodic maps can be described in
  \emph{one line notation} by listing the $n$ values
  $[w(1),w(2),\ldots,w(n)] \in \Z^n$.  It is easy to check whether
  such a list of $n$ integers satisfies the second and third
  conditions.
\end{defn}

\begin{rem}[Isomorphisms]
  The isomorphisms between the combinatorial and geometric
  descriptions of $\affS_n$ go as follows.  Since every integer can be
  uniquely written as a number in $[n]$ plus a multiple of $n$, the
  one line notation $[w(1),w(2),\ldots,w(n)]$ can be uniquely written
  as $u + t$, where $u$ has all entries in $[n]$ and $t = n \cdot
  \lambda$ for some vector $\lambda \in \Z^n$.  By the second
  condition, $u$ is the one line notation of a permutation $\pi$ in
  $\Sym_n$, while by third condition, $\lambda$ has coordinate sum
  $0$.  The bijection $w$ with this one line notation is sent by the
  isomorphism to the element $t_\lambda u_\pi \in \affS_n$.
\end{rem}

\subsection{Partitions, cycles, and root arrangements}

In this section, we describe the move-sets and fixed spaces of
permutations in terms of their cycle structure.  For this purpose, it
is helpful to work with the language of set partitions.

\begin{defn}[Partitions]\label{def:partitions}
  A \emph{partition} $P$ of a set $A$ is a collection of pairwise
  disjoint nonempty subsets whose union is $A$.  The subsets in $P$
  are called \emph{blocks}, and the \emph{size} $|P|$ of $P$ is the
  number of blocks it contains.  Concretely, a partition of size $k$
  is a collection $P = \{B_1, B_2, \ldots, B_k\}$ with $\emptyset \neq
  B_i \subset A$ for all $i$, $B_i \cap B_j = \emptyset$ for $i \neq
  j$, and $B_1 \cup \cdots \cup B_k = A$.
\end{defn}

\begin{defn}[Cycles and partitions]\label{rem:cyc-part}
  For every permutation $\pi \in \Sym_n$, there is a partition
  $P(\pi)$ with $i$ and $j$ in the same block of $P(\pi)$ if and only
  if $i$ and $j$ are in the same orbit under the action of $\pi$.
  Concretely, $i$ and $j$ are in the same block of $P(\pi)$ if and
  only if there is an integer $\ell$ so that $\pi^\ell(i) = j$.  When
  $\pi$ is written in \emph{cycle notation}, each block of $P(\pi)$
  corresponds to the set of numbers contained one cycle of $\pi$.  For
  example, the permutation $\pi$ in $\Sym_6$ with one line notation
  $[4, 5, 1, 3, 2, 6]$ has cycle notation $\pi = (1,4,3)(2,5)(6)$ and
  associated partition $P(\pi) = \{\{1,3,4\},\{2,5\},\{6\}\}$.  Let
  $\cyc(\pi)$ be the set of cycles in $\pi$, so that $\size{\cyc(\pi)}
  = \size{P(\pi)}$ is the number of cycles.
\end{defn}

\begin{defn}[Partition lattice]\label{def:part_n}
  Let $P$ and $Q$ be two partitions of the same set $A$. We say that
  $P$ \emph{refines} $Q$ and write $P \leq Q$ if every block of $P$ is
  contained in some block of $Q$.  We denote by $\prt_A$ the set of
  all partitions of a fixed finite set $A$, ordered by
  refinement. This poset is a bounded, graded lattice, and two
  partitions are at the same height if and only if they have the same
  size.  The partition $P_\one = \{A\}$ with only one block is the
  unique maximum element.  The partition $P_\zero$ with $\size{A}$
  blocks, each containing a single element, is the unique minimum
  element.  When $A = [n]$, we write $\prt_n$ instead of $\prt_{[n]}$.
\end{defn}

The next few definitions describe the spaces that turn out to be equal
to the fixed spaces and move-sets of permutations.

\begin{defn}[Special vectors]\label{def:special}
  Let $\R^n$ be a euclidean vector space and let $e_1, e_2,\ldots,
  e_n$ be a fixed ordered orthonormal basis. For each subset
  $\emptyset \neq B \subset [n]$, let $\one_B$ be the sum of the basis
  vectors indexed by the numbers in $B$, i.e., $\one_B = \sum_{i\in B}
  e_i$.  We call $\one_B$ a \emph{special vector}.  When $B = \{i\}$
  only contains a single element, $\one_{\{i\}} = e_i$ is a basis
  vector.  At the other extreme, $\one_{[n]} = \one = (1, \ldots, 1)$
  is the all-$1$s vector.
\end{defn}

\begin{defn}[$F$-spaces]\label{def:f-spaces}
  For each partition $P \in \prt_n$, we define the \emph{$F$-space}
  $F_P \subset \R^n$ to be the set of all vectors whose coordinates
  are constant on the blocks of $P$.  In other words, a vector $v =
  (v_1, \ldots, v_n)$ is in $F_P$ if and only if $v_i = v_j$ for all
  $i, j$ belonging to the same block $B \in P$.
\end{defn}  

\begin{remark}[$F$-spaces]
  When $P = \{B_1, B_2, \ldots, B_k\}$, we have $F_P$ is equal to the
  span of $\one_{B_1}$, $\one_{B_2}$, \ldots, $\one_{B_k}$, and these
  special vectors form a basis for $F_P$.  In particular, $\dim(F_P) =
  \size{P}$. At the extremes, $F_{P_\zero} = \spn(\{e_1,e_2,\ldots
  e_n\}) = \R^n$ is the entire space and $F_{P_\one} = \spn(\{\one\})
  = \R\one$ is a line.  Also note that $F_P \subset F_Q$ if and only
  if $Q \leq P$ in $\prt_n$.  Thus every $F$-space contains the line
  $F_{P_\one} = \R\one$.
\end{remark}

\begin{defn}[$M$-spaces]\label{def:m-spaces}
  For each partition $P \in \prt_n$, we define the \emph{$M$-space}
  $M_P \subset \R^n$ to be the orthogonal complement of the
  corresponding $F$-space $F_P$.  In symbols, $M_P = (F_P)^\perp$ and
  $F_P \oplus M_P = \R^n$.  In terms of coordinates, $M_P$ is the set
  of vectors $v$ such that, for each block $B$ of $P$, the sum of the
  coordinates of $v$ indexed by elements of $B$ is equal to $0$.
\end{defn}

\begin{remark}[$M$-spaces]\label{rem:m-spaces}
  Since $\dim(F_P) = \size{P}$, we have $\dim(M_P) =
  n-\size{P}$. Since $F_{P_\zero} = \R^n$, $M_{P_\zero} = \{0\}$ is
  the trivial subspace.  At the other extreme, since $F_{P_\one} =
  \R\one$ is a line, $M_{P_\one} = (F_\one)^\perp$ is the
  codimension-$1$ subspace that we call $V$.  Taking orthogonal
  complements reverses containment, so $M_P \subset M_Q$ in $\R^n$ if
  and only if $P \leq Q$ in $\prt_n$.  Because every $F$-space
  contains the line $\R\one = F_{P_\one}$, every $M$-space is
  contained in the subspace $V = M_{P_\one}$.
\end{remark}

\begin{prop}[Permutations and partitions]\label{prop:perm-part}
  For each $\pi \in \Sym_n$ with partition $P(\pi) \in \prt_n$, the
  elliptic isometry $u_\pi$ acting on $\R^n$ has $\fix(u_\pi) =
  F_{P(\pi)}$ and $\mov(u_\pi) = M_{P(\pi)}$.
\end{prop}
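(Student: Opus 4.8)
The plan is to treat the fixed space directly from the definitions and then read off the move-set by orthogonal complementarity, so that only one genuine computation is needed.

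First I would compute $\fix(u_\pi)$. By Definition~\ref{def:affS-roots} the $i$-th coordinate of $u_\pi(v)$ is $v_{\pi(i)}$, so $v \in \fix(u_\pi)$ exactly when $v_{\pi(i)} = v_i$ for every $i \in [n]$. Iterating the relation $v_{\pi(i)} = v_i$ shows that such a $v$ is constant along each $\pi$-orbit, and conversely any vector constant on $\pi$-orbits visibly satisfies it. Since the $\pi$-orbits are precisely the blocks of $P(\pi)$ (Definition~\ref{rem:cyc-part}), this is exactly the condition that $v$ be constant on the blocks of $P(\pi)$, which is the defining condition for $F_{P(\pi)}$ (Definition~\ref{def:f-spaces}). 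Hence $\fix(u_\pi) = F_{P(\pi)}$.

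Second, for the move-set I would observe that $u_\pi$ permutes the orthonormal basis $e_1,\dots,e_n$ and is therefore an orthogonal transformation of $\R^n$, so the reasoning behind Remark~\ref{rem:orthogonal complement} applies verbatim: for orthogonal $w$ the spaces $\im(w-1)$ and $\Ker(w-1)$ are orthogonal complements, since $(w-1)^\top = w^{-1}-1 = -w^{-1}(w-1)$ has the same kernel as $w-1$. Thus $\mov(u_\pi) = \fix(u_\pi)^\perp = F_{P(\pi)}^\perp = M_{P(\pi)}$, the final equality being the definition of the $M$-space (Definition~\ref{def:m-spaces}).

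I expect no serious obstacle, as the content is a routine unwinding of definitions; the only point deserving a line of care is the passage from the pointwise condition $v_{\pi(i)}=v_i$ to constancy on blocks, which rests on the blocks of $P(\pi)$ being generated by iterating $\pi$. As a cross-check, and as an alternative to invoking complementarity for the move-set, I could instead verify $\mov(u_\pi) \subseteq M_{P(\pi)}$ by hand: the $i$-th coordinate of $u_\pi(v)-v$ is $v_{\pi(i)}-v_i$, and summing over a block $B$ gives $\sum_{i\in B} v_{\pi(i)} - \sum_{i\in B} v_i = 0$ because $\pi$ permutes $B$; equality then follows on dimensional grounds from $\dim \mov(u_\pi) = n - \dim\fix(u_\pi) = n - \size{P(\pi)} = \dim M_{P(\pi)}$ using Remark~\ref{rem:m-spaces}.
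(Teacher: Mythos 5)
Your proof is correct and follows essentially the same route as the paper: the fixed space is read off directly from the definition of the action, and the move-set is then obtained from the fact that for an elliptic (here, orthogonal) isometry the move-set and fixed space are orthogonal complements, matching the complementarity of $F_{P(\pi)}$ and $M_{P(\pi)}$. The only cosmetic difference is that you prove the complementarity by the $(w-1)^\top$ computation rather than citing the paper's reference for it, which is a fine self-contained substitute.
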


\begin{proof}
  That a point $v$ is fixed under the action of $u_\pi$ if and only if
  $v$ is in $F_{P(\pi)}$ is clear from the definition of the action,
  which proves the assertion about the fixed space.  Since move-sets
  and fixed spaces of elliptic isometries are orthogonal complements
  \cite[Lemma~3.6]{BradyMcCammond-15}, as are $F_{P(\pi)}$ and
  $M_{P(\pi)}$ (Definition~\ref{def:m-spaces}), the assertion about
  move-sets follows from the assertion about the fixed space.
\end{proof}

\begin{remark}[Root arrangement]\label{rem:arr-affS}
  Proposition~\ref{prop:perm-part} shows that the move-set of every
  permutation $\pi \in \Sym_n$ is an $M$-space.  Moreover, it is easy
  to see that every partition $P$ of $[n]$ is induced as $P = P(\pi)$
  for some permutation $\pi$.  Thus, the move-sets of permutations are
  exactly the $M$-spaces.  It follows from Lemma~\ref{lem:red-ell}
  that these spaces are also exactly the set of root spaces for the
  root system $\Phi_n$, and so the arrangement of $M$-spaces is
  exactly the root space arrangement
  (Definition~\ref{def:move-set-arr}).
\end{remark}

The root arrangement for $\Phi_n$ should not be confused with the
better known braid arrangement.

\begin{rem}[Braid arrangement]\label{rem:braid-arr}
  The \emph{braid arrangement} in $\R^n$ is the collection of
  $\binom{n}{2}$ hyperplanes that are orthogonal to a root in
  $\Phi_n$.  Concretely, there is a hyperplane $H_{ij}$ for all
  distinct $i,j\in [n]$ that contains the vectors $v$ with $v_i =
  v_j$.  The result of taking all intersections of hyperplanes in the
  braid arrangement is a subspace arrangement that is equal to the
  collection of $F$-spaces $\{F_P \mid P \in \prt_n\}$.  By
  Remark~\ref{rem:arr-affS}, the spaces in the root arrangement
  $\arr(\Phi_n)$ are the $M$-spaces, i.e., the orthogonal complements
  of the $F$-spaces.  The hyperplanes in $\arr(\Phi_n)$ are the
  $M$-spaces $M_P$ where $P$ is a partition of size $2$, and there are
  exactly $(2^n-2)/2 = 2^{n-1}-1$ of these.  Unlike the $F$-spaces,
  the $M$-spaces are \emph{not} closed under intersection.  For
  example, in $\R^4$, the intersection $M_{\{\{1,2\},\{3,4\}\}} \cap
  M_{\{\{1,4\},\{2,3\}\}}$ is the line consisting of vectors of the
  form $(a,-a,a,-a)$ for $a\in \R$, and this is not an $M$-space.
\end{rem}

\subsection{Null partitions}\label{sec:null-parts}

In this section, we assemble the final combinatorial and
linear-algebraic objects necessary to reinterpret reflection length.

\begin{defn}[$L$-maps]\label{def:l-maps}
  Let $v$ be a vector in $\R^n$.  For each nonempty subset $B \subset
  [n]$, we define the real number $v_B$ to be the sum of the
  coordinates of $v$ in the positions indexed by the set $B$.
  Equivalently, $v_B = \langle v, \one_B\rangle$, where $\one_B$ is
  the special vector defined in Definition~\ref{def:special}.  For
  each partition $P \in \prt_n$ with $\size{P} = k$, we define a
  linear transformation $L_P\colon \R^n \to \R^k$ as follows.  When $P
  = \{B_1,\ldots, B_k\}$ we assign the names $e_{B_i}$ to an
  orthonormal basis of $\R^k$ and we define $L_P(v) = \sum_{i=1}^k
  v_{B_i} e_{B_i}$.  We call the maps of this form \emph{$L$-maps}.
\end{defn}

If we impose an order on the blocks of $P$, then in coordinates
$L_P(v) = (v_{B_1},v_{B_2},\ldots, v_{B_k})$, but note that $L_P$ is a
well defined map independent of such an ordering.  For example, if $v
= (-5,2,1,0,4,-2)$ and $P= \{\{1,3\},\{2,4,6\},\{5\}\}$ with the
blocks in that order, then $L_P(v) = (-4,0,4)$.

\begin{prop}[$M$-space membership]\label{prop:m-l}
  Let $P \in \prt_n$ be a partition of $[n]$.  Then $\ker L_P = M_P$.
\end{prop}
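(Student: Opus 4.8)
The plan is to verify that $\ker L_P$ and $M_P$ are cut out by exactly the same system of linear equations, namely the vanishing of the block-sums $v_B = \langle v, \one_B\rangle$ as $B$ ranges over the blocks of $P$. First I would unwind the definition of the $L$-map from Definition~\ref{def:l-maps}: writing $P = \{B_1,\ldots,B_k\}$, we have $L_P(v) = \sum_{i=1}^k v_{B_i} e_{B_i}$, where each scalar $v_{B_i} = \langle v, \one_{B_i}\rangle$ is the sum of the coordinates of $v$ indexed by the block $B_i$. Since the vectors $e_{B_1},\ldots,e_{B_k}$ are declared to be an orthonormal (hence linearly independent) basis of $\R^k$, the vector $L_P(v)$ vanishes if and only if every coefficient $v_{B_i}$ is zero. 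Thus $v \in \ker L_P$ precisely when $v_{B_i} = 0$ for all $i$.

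Next I would invoke the coordinate description of $M_P$ recorded in Definition~\ref{def:m-spaces}: a vector $v$ lies in $M_P$ exactly when, for each block $B$ of $P$, the sum of the coordinates of $v$ indexed by the elements of $B$ equals $0$. In the present notation this says $v_B = 0$ for every block $B$ of $P$, which is literally the same condition that characterizes membership in $\ker L_P$. Comparing the two descriptions then yields $\ker L_P = M_P$.

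There is no real obstacle here, as the argument is entirely a matter of matching the definition of $L_P$ against the coordinate description of the $M$-space. The only step that deserves explicit mention is the passage from the vanishing of $L_P(v)$, a single vector in $\R^k$, to the simultaneous vanishing of the scalar block-sums $v_{B_i}$; this uses nothing more than the linear independence of the $e_{B_i}$, which is built into Definition~\ref{def:l-maps}. Everything else is immediate from the definitions already in place.
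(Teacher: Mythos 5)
Your proof is correct and takes essentially the same route as the paper: both arguments reduce membership in each of $\ker L_P$ and $M_P$ to the vanishing of the block sums $v_{B_i} = \langle v, \one_{B_i}\rangle$. The only cosmetic difference is that you quote the coordinate description of $M_P$ recorded in Definition~\ref{def:m-spaces}, whereas the paper re-derives that description from the primary definition $M_P = (F_P)^\perp$ together with the fact that $\{\one_{B_1},\ldots,\one_{B_k}\}$ is a basis of $F_P$; the content is identical.
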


\begin{proof}
  Let $P = \{B_1, \ldots, B_k\}$.  The vector $v$ is in the space
  $M_P$ if and only if it is orthogonal to every vector in $F_P$.  Since
  $\{\one_{B_1}, \ldots, \one_{B_k}\}$ is a basis of $F_P$, it is
  sufficient to test whether $v$ is orthogonal to each $\one_{B_i}$,
  and this is equivalent to being in the kernel of map $L_P$.
\end{proof}

\begin{defn}[Null partitions]\label{def:null-parts}
  Let $v$ be a vector in $V = M_\one$. A nonempty subset $B\subset
  [n]$ is called a \emph{null block} of $v$ if the sum $v_B$
  (Definition~\ref{def:l-maps}) of the entries of $v$ indexed by $B$
  is equal to $0$, or equivalently if $v$ is orthogonal to the special
  vector $\one_B$ (Definition~\ref{def:special}).  A partition $P \in
  \prt_n$ is called a \emph{null partition} of $v$ if every block of
  $P$ is a null block of $v$.  Equivalently, $P$ is a null partition
  of $v$ if and only if $v$ is in the kernel of $L_P$
  (Definition~\ref{def:l-maps}) and if and only if $v \in M_P$
  (Proposition~\ref{prop:m-l}).  We write $\nprt(v)$ for the
  collection of all null partitions of $v$.  This collection is not
  empty since $v\in V$ implies $\langle v, \one \rangle = 0$, which
  means $P_\one \in \nprt(v)$.  The \emph{nullity} of $v$ is the
  maximal size of a null partition of $v$.  In symbols, $\nll(v) =
  \max\{ \size{P} \mid P \in \nprt(v)\}$.
\end{defn}

\begin{example}[Null partitions]\label{ex:null-parts}
  Consider $v = (-5,2,1,0,4,-2)$ in $V = M_\one \subset \R^6$.  The
  partition $P = \{\{1,3,5\},\{4\},\{2,6\}\}$ is a null partition of
  $v$ and its blocks are null blocks of $v$.  The set $\{1,2,3\}$ is
  not a null block of $v$ because $\langle v, \one_{\{1,2,3\}}\rangle
  = -2 \neq 0$.
\end{example}

\begin{prop}[Nullity and dimension]\label{prop:null-dim}
  For each $v \in V$, $\nll(v) + \dim_{\Phi_n}(\{v\}) = n$.
\end{prop}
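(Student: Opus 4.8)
The plan is to identify the root dimension $\dim_{\Phi_n}(\{v\})$ directly with a minimization over null partitions, and then read off the complementary relationship with nullity. The key structural fact, already established, is that the root spaces for $\Phi_n$ are exactly the $M$-spaces $M_P$ indexed by set partitions $P$ of $[n]$ (Remark~\ref{rem:arr-affS}). Thus computing $\dim_{\Phi_n}(\{v\})$ amounts to finding the smallest-dimensional $M$-space that contains $v$, and the entire argument is a bookkeeping translation between ``small root space'' and ``large null partition.''

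First I would translate the containment condition into combinatorics. By Proposition~\ref{prop:m-l} together with Definition~\ref{def:null-parts}, a vector $v \in V$ lies in $M_P$ if and only if $P$ is a null partition of $v$, i.e., $P \in \nprt(v)$. Next I would recall the dimension formula $\dim(M_P) = n - \size{P}$ from Remark~\ref{rem:m-spaces}. Combining these two facts, the collection of root spaces containing $v$ is exactly $\{M_P \mid P \in \nprt(v)\}$, and the dimensions occurring among them are precisely the numbers $n - \size{P}$ as $P$ ranges over $\nprt(v)$.

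It then remains to minimize. Since $\dim_{\Phi_n}(\{v\})$ is by definition the minimum dimension of a root space containing $v$, we obtain
\[
\dim_{\Phi_n}(\{v\}) = \min_{P \in \nprt(v)} \bigl(n - \size{P}\bigr) = n - \max_{P \in \nprt(v)} \size{P} = n - \nll(v),
\]
where the final equality is the definition of nullity. Rearranging gives $\nll(v) + \dim_{\Phi_n}(\{v\}) = n$, as claimed.

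There is no genuinely hard step here, as the content is carried entirely by the earlier structural results; I expect the only point meriting an explicit sentence of care is that the extremization is taken over a nonempty set. This holds because $P_\one \in \nprt(v)$ for every $v \in V$, a fact recorded in Definition~\ref{def:null-parts}, which guarantees both the minimum over root spaces and the maximum defining $\nll(v)$ are attained.
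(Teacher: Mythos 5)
Your proof is correct and follows essentially the same route as the paper's: both translate ``root space containing $v$'' into ``null partition of $v$'' via Remark~\ref{rem:arr-affS}, Proposition~\ref{prop:m-l}, and Definition~\ref{def:null-parts}, then use $\size{P} + \dim(M_P) = n$ from Remark~\ref{rem:m-spaces} to convert the minimization of dimension into the maximization defining $\nll(v)$. Your extra remark that the extremization is over a nonempty set (since $P_\one \in \nprt(v)$) is a small point of care the paper leaves implicit.
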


\begin{proof}
  By Definition~\ref{def:root-dim}, $\dim_{\Phi_n}(\{v\})$ is the
  minimal dimension of a root space in $\arr(\Phi_n)$ that contains
  $v$.  By Remark~\ref{rem:arr-affS}, root spaces ares $M$-spaces.
  For each $P \in \prt_n$, $P \in \nprt(v)$ if and only if $v \in M_P$
  (Definition~\ref{def:null-parts}) and $\size{P} + \dim(M_P) = n$
  (Remark~\ref{rem:m-spaces}).  Thus, among the null partitions for
  $v$, the ones that maximize $|P|$ simultaneously minimize
  $\dim(M_P)$.  For a partition $P$ with these properties, the
  equation $\size{P} + \dim(M_P) = n$ becomes $\nll(v) +
  \dim_{\Phi_n}(\{v\}) = n$, as claimed.
\end{proof}

\subsection{Combinatorial formulas for statistics}

In this section, we complete the work of
Section~\ref{sec:aff-sym}. For any affine permutation $w$, we give
combinatorial expressions for the elliptic dimension $e(w)$ and the
differential dimension $d(w)$. Thus by Theorem \ref{main:dim}, we
obtain a combinatorial expression for the reflection length $\lr(w)$.

First, we consider the elliptic dimension.  Our result generalizes
D\'enes's result that the reflection length of a permutation $\pi$ is
$n - \size{\cyc(\pi)}$.

\begin{prop}[Elliptic dimension]\label{prop:e-affS}
  Let $w \in \affS_n$ be an affine permutation, with normal form $w =
  t_\lambda u_\pi$ (so that $u_\pi$ is the elliptic part of $w$).
  Then $e(w) = n - \size{\cyc(\pi)}$.
\end{prop}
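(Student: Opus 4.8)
The plan is to reduce the claimed identity $e(w) = n - \size{\cyc(\pi)}$ to results already assembled in the excerpt, chaining together the geometric definition of elliptic dimension with the combinatorial bookkeeping of $M$-spaces and partitions. First I would recall that, by Definition~\ref{def:dim-elt}, the elliptic dimension is $e(w) = \dim(w_e)$, where $w_e = p(w)$ is the elliptic part of $w$. Since $w = t_\lambda u_\pi$ is the normal form, we have $w_e = p(w) = \pi$ (viewed as the elliptic isometry $u_\pi$), so $e(w) = \dim(u_\pi) = \dim_{\Phi_n}(\mov(u_\pi))$.

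The key step is then to identify $\mov(u_\pi)$ explicitly. By Proposition~\ref{prop:perm-part}, the move-set of the elliptic isometry $u_\pi$ is exactly the $M$-space $M_{P(\pi)}$, where $P(\pi)$ is the partition of $[n]$ into the cycles of $\pi$. Because $u_\pi$ is elliptic, its move-set is itself a root space (by Lemma~\ref{lem:red-ell}, or equivalently by the observation in Remark~\ref{rem:arr-affS} that $M$-spaces are precisely the root spaces of $\Phi_n$). Consequently the root dimension $\dim_{\Phi_n}(M_{P(\pi)})$ coincides with the ordinary linear dimension $\dim(M_{P(\pi)})$, since no smaller root space can contain a root space of that dimension.

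Finally I would invoke the dimension count for $M$-spaces recorded in Remark~\ref{rem:m-spaces}: $\dim(M_P) = n - \size{P}$. Applying this with $P = P(\pi)$ and using that $\size{P(\pi)} = \size{\cyc(\pi)}$ (Definition~\ref{rem:cyc-part}) gives $e(w) = \dim(M_{P(\pi)}) = n - \size{\cyc(\pi)}$, which is the desired formula.

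I do not expect a serious obstacle here, as every ingredient has been set up in advance; the proof is essentially a matter of correctly threading the definitions so that the geometric quantity $\dim(w_e)$ is translated into the combinatorial count of cycles. The one point that warrants a careful sentence is the passage from root dimension to linear dimension, which relies on the fact that $\mov(u_\pi)$ is already a root space and so is its own minimal root-space cover; this is exactly the content used in Remark~\ref{rem:computing-e}, and I would cite that observation to keep the argument short.
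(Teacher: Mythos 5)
Your proof is correct and follows essentially the same route as the paper, which chains Remark~\ref{rem:computing-e} (whose content you unpack inline: $e(w) = \dim(\mov(u_\pi))$ via Lemma~\ref{lem:red-ell} and Proposition~\ref{prop:equal}), Proposition~\ref{prop:perm-part} to identify $\mov(u_\pi) = M_{P(\pi)}$, and the dimension count $\dim(M_P) = n - \size{P}$ from Remark~\ref{rem:m-spaces}. The only difference is expository: the paper cites Remark~\ref{rem:computing-e} as a black box, while you spell out its ingredients, including the key point that $\mov(u_\pi)$ is itself a root space so its root dimension equals its linear dimension.
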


\begin{proof}
 By Remark~\ref{rem:computing-e}, $e(w) = \dim(\mov(u_\pi))$.  By
 Proposition~\ref{prop:perm-part}, this is equal to
 $\dim(M_{P(\pi)})$.  As noted in Remark~\ref{rem:m-spaces}, this is
 equal to $n - \size{P(\pi)} = n - \size{\cyc(\pi)}$, as claimed.
\end{proof}

For the differential dimension we need a relative notion of nullity.

\begin{defn}[Relative nullity]\label{def:rel-null}
  Let $w = t_\lambda u_\pi$ be an affine permutation in $\affS_n$ with
  $\pi \in \Sym_n$ and $\lambda$ in the $\Z$-span of $\Phi_n$ and let
  $k = \size{P(\pi)} = \cyc(\pi)$ be the number of cycles of the
  permutation $\pi$ (Definition~\ref{def:perms}).  We call the nullity
  of the point $L_{P(\pi)}(\lambda)$ in the arrangement $\arr(\Phi_k)$
  the \emph{relative nullity of $\lambda$ modulo $\pi$} and we denote
  this number by $\nll(\lambda/\pi)$.  In combinatorial language, the
  relative nullity is the maximum size of a null partition $P$ of
  $\lambda = w(0)$ such that every cycle of $\pi = p(w)$ is contained
  in a single part of $P$.
\end{defn}

\begin{prop}[Differential dimension]\label{prop:d-affS}
  An affine permutation $w \in \affS_n$ with normal form $w =
  t_\lambda u_\pi$ has $d(w) = |\cyc(\pi)|-\nll(\lambda/\pi)$.
\end{prop}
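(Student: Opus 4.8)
The plan is to reduce the statement to the identity $d(w) = \dim(w) - e(w)$ and then to compute $\dim(w)$ by an explicit optimization over set partitions. Since Proposition~\ref{prop:e-affS} already gives $e(w) = n - \size{\cyc(\pi)}$, it suffices to prove that $\dim(w) = n - \nll(\lambda/\pi)$; subtracting the two formulas then yields $d(w) = \size{\cyc(\pi)} - \nll(\lambda/\pi)$, as claimed.

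To compute $\dim(w) = \dim_{\Phi_n}(\mov(w))$, I would first record that $\mov(w) = \lambda + U$ with $U = \mov(u_\pi) = M_{P(\pi)}$ (Propositions~\ref{prop:translated move-set} and~\ref{prop:perm-part}), and that the candidate root subspaces are exactly the $M$-spaces $M_Q$ for $Q \in \prt_n$ (Remark~\ref{rem:arr-affS}). By Lemma~\ref{lem:sep}, such an $M_Q$ contains the affine space $\lambda + U$ if and only if it contains both the subspace $U$ and the vector $\lambda$. The key step is to translate each of these two conditions into the language of partitions: $M_Q \supseteq U = M_{P(\pi)}$ holds exactly when $Q$ coarsens $P(\pi)$, i.e.\ $P(\pi) \leq Q$ (Remark~\ref{rem:m-spaces}), while $\lambda \in M_Q$ holds exactly when $Q$ is a null partition of $\lambda$ (Definition~\ref{def:null-parts}). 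Because $\lambda$ lies in $V$ (it is in the $\Z$-span of $\Phi_n$), the collection of admissible $Q$ is nonempty, containing $P_\one$, and finite, so the relevant extrema are attained.

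With the admissible set identified, the remainder is bookkeeping. Since $\dim(M_Q) = n - \size{Q}$ (Remark~\ref{rem:m-spaces}), minimizing the dimension of a root subspace containing $\mov(w)$ amounts to maximizing $\size{Q}$ over partitions $Q$ that simultaneously coarsen $P(\pi)$ and are null for $\lambda$. By the combinatorial description in Definition~\ref{def:rel-null}, this maximum is precisely the relative nullity $\nll(\lambda/\pi)$, whence $\dim(w) = n - \nll(\lambda/\pi)$, finishing the argument.

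The proof is essentially a chain of equivalences, so there is no single deep obstacle; the only place demanding care is the double translation in the second paragraph, where the two hypotheses of Lemma~\ref{lem:sep} (containment of $U$ and membership of $\lambda$) must be matched exactly with the two combinatorial constraints ($Q$ coarsens $P(\pi)$ and $Q$ is null for $\lambda$) built into the definition of relative nullity. As a cross-check, one may instead route the computation through the quotient arrangement of Remark~\ref{rem:computing-d}: the map $L_{P(\pi)}$ has kernel $U$ (Proposition~\ref{prop:m-l}) and induces an isomorphism of $V/U$ with the coordinate-sum-zero subspace of $\R^k$, under which $\arr(\Phi_n/U)$ becomes $\arr(\Phi_k)$ and $\lambda/U$ becomes $L_{P(\pi)}(\lambda)$; Proposition~\ref{prop:null-dim} applied in $\R^k$ then gives $d(w) = \dim_{\Phi_k}(L_{P(\pi)}(\lambda)) = k - \nll(\lambda/\pi)$ directly. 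I would present the first, more elementary route in the main argument and keep the second in reserve as verification.
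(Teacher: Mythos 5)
Your proposal is correct, and your main argument takes a genuinely different (and somewhat more elementary) route than the paper; interestingly, your ``reserve'' cross-check is essentially the paper's actual proof. The paper invokes the general quotient machinery of Remark~\ref{rem:computing-d}: it identifies $d(w)$ with the dimension of the point $L_{P(\pi)}(\lambda)$ in the quotient arrangement $\arr(\Phi_n/U)$, does the real work of showing this quotient arrangement equals $\arr(\Phi_k)$ (by computing that each root $e_i - e_j$ maps to $e_{\overline{\imath}} - e_{\overline{\jmath}}$, which is zero or a root of $\Phi_k$, and that every root of $\Phi_k$ has a preimage), and then applies the nullity--dimension duality of Proposition~\ref{prop:null-dim} inside $\R^k$. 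You instead never leave $V$: you compute $\dim(w)$ directly by characterizing the admissible root spaces $M_Q \supseteq \mov(w) = \lambda + U$ via Lemma~\ref{lem:sep} together with the dictionary $M_{P(\pi)} \subseteq M_Q \Leftrightarrow P(\pi) \leq Q$ and $\lambda \in M_Q \Leftrightarrow Q \in \nprt(\lambda)$, and then subtract $e(w)$ using Proposition~\ref{prop:e-affS}. What the paper's route buys is that it exercises the uniform recipe of Remark~\ref{rem:computing-d} (valid in any affine Coxeter group) and makes explicit the structural fact that a type-A root arrangement quotiented by one of its members is again a type-A root arrangement. What your route buys is self-containedness -- no quotient arrangements at all -- at the cost of leaning on the ``combinatorial language'' sentence of Definition~\ref{def:rel-null} (relative nullity as the maximum size of a null partition of $\lambda$ in which every cycle of $\pi$ lies in a single part); the equivalence of that sentence with the formal definition via $L_{P(\pi)}$ is precisely the block-sum-preserving bijection between partitions of the cycle set and coarsenings of $P(\pi)$, which is the same bookkeeping the paper's root-image computation makes explicit. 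Since the paper asserts that reformulation as part of the definition, your use of it is legitimate, though a fully self-contained write-up should include the one-line verification of that bijection.
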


\begin{proof}
  Let $U = \mov(u_\pi)$ be the move-set of the elliptic part $u_{\pi}$
  of $w$, and let $k = \size{P(\pi)} = \size{\cyc{\pi}}$.  By
  Remark~\ref{rem:computing-d}, the differential dimension $d(w)$ is
  equal to the dimension of the point $\lambda/U$ in the subspace
  arrangement $\arr(\Phi_n/U)$.  By Proposition~\ref{prop:perm-part},
  the move-set $U$ is equal to the $M$-space $M_{P(\pi)}$, and by
  Proposition~\ref{prop:m-l}, quotienting by the $M$-space
  $M_{P(\pi)}$ is achieved by applying the $L$-map $L_{P(\pi)}$.  In
  particular, the point $\lambda/U$ is the point $L_{P(\pi)}(\lambda)$
  in $\R^k$.
  
  Let $\overline{\imath}$ denote the block of $P(\pi)$ containing $i$.
  The image of the root $e_i-e_j$ in $\Phi_n$ under $L_{P(\pi)}$ is
  $e_{\overline{\imath}}-e_{\overline{\jmath}}$, which is either equal
  to $0$ (if $i$ and $j$ belong to the same cycle of $\pi$) or is a
  root in $\Phi_k$.  Moreover, for each root in $\Phi_k$, we can find
  a preimage in $\Phi_n$ by picking representatives.  It follows that
  the quotient arrangement $\arr(\Phi_n/U)$ is equal to the root
  arrangement $\arr(\Phi_k)$.  Thus $d(w)$ is the dimension of the
  point $L_{P(\pi)}(\lambda)$ in the arrangement $\arr(\Phi_k)$.  By
  Proposition~\ref{prop:null-dim} and Definition~\ref{def:rel-null},
  we have
  \[
   \nll(\lambda/\pi) + \dim_{\Phi_k}(\lambda/U) = k.
  \]
  As $k=|\cyc(\pi)|$, we can rewrite this as
  \[
   d(w) = |\cyc(\pi)|-\nll(\lambda/\pi),
  \]
  as claimed.
\end{proof}

Combining these propositions gives us a combinatorial formula for the
reflection length of an element in an affine symmetric group.

\begin{thm}[Formula]\label{thm:lr-affS}
  An affine permutation $w \in \affS_n$ with normal form $w =
  t_\lambda u_\pi$ has $\lr(w) = n - 2\cdot \nll(\lambda/\pi) +
  \size{\cyc(\pi)}$.
\end{thm}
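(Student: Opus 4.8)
The plan is to derive this formula directly from our general result by substituting the two combinatorial expressions for the dimension statistics that we have already established. First I would invoke Theorem~\ref{main:dim}, which tells us that reflection length decomposes as $\lr(w) = 2d(w) + e(w)$, where $d$ and $e$ are the differential and elliptic dimensions. The combinatorial content has been isolated in the preceding two propositions: Proposition~\ref{prop:e-affS} identifies the elliptic dimension as $e(w) = n - \size{\cyc(\pi)}$, while Proposition~\ref{prop:d-affS} identifies the differential dimension as $d(w) = \size{\cyc(\pi)} - \nll(\lambda/\pi)$.

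With these in hand, the argument is a single algebraic simplification. Substituting both expressions into the formula from Theorem~\ref{main:dim} gives
\begin{align*}
  \lr(w) &= 2d(w) + e(w) \\
  &= 2\bigl(\size{\cyc(\pi)} - \nll(\lambda/\pi)\bigr) + \bigl(n - \size{\cyc(\pi)}\bigr) \\
  &= n - 2\,\nll(\lambda/\pi) + \size{\cyc(\pi)},
\end{align*}
which is exactly the claimed expression.

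I expect no genuine obstacle at this stage, since all of the real work has been pushed into the earlier propositions. In particular, the subtle point — that the relative nullity $\nll(\lambda/\pi)$ correctly computes the differential dimension — is precisely the content of Proposition~\ref{prop:d-affS}, whose proof passes through the quotient arrangement $\arr(\Phi_n/U) = \arr(\Phi_k)$ and the nullity--dimension relation of Proposition~\ref{prop:null-dim}. Once that identification is accepted, the theorem follows purely formally, so the statement is best understood as the combinatorial specialization of Theorem~\ref{main:dim} to the affine symmetric group and as the natural affine analog of D\'enes's formula $\lr(\pi) = n - \size{\cyc(\pi)}$.
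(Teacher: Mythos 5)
Your proof is correct and is essentially identical to the paper's own argument: both invoke Theorem~\ref{main:dim} and substitute the formulas $e(w) = n - \size{\cyc(\pi)}$ and $d(w) = \size{\cyc(\pi)} - \nll(\lambda/\pi)$ from Propositions~\ref{prop:e-affS} and~\ref{prop:d-affS}, then simplify. You also correctly identify that all the substantive work lives in those two propositions, so nothing further is needed here.
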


\begin{proof}
  By Theorem~\ref{main:dim} and Propositions~\ref{prop:e-affS}
  and~\ref{prop:d-affS}, 
  \[
    \lr(w) = 2\cdot (\size{\cyc(\pi)} - \nll(\lambda/\pi)) + (n - \size{\cyc(\pi)}),
  \] 
  which simplifies to the expression in the statement of the theorem.
\end{proof}

\section{Future work\label{sec:future}}

\subsection{Non-euclidean Coxeter groups}

Is there a natural extension of Theorem~\ref{main:dim} to other
infinite Coxeter groups?  Because reflection length is unbounded for
all irreducible non-euclidean Coxeter groups of infinite type
\cite{Duszenko12}, there cannot be a simple formula with a bounded
number of summands representing bounded dimensions.  Thus any
extension must necessarily have a different flavor.

\subsection{Identifying reduced factorizations}

It is easy to characterize when reflection is minimal in spherical
Coxeter groups: the product $r_1 \cdots r_k$ of reflections in a
spherical Coxeter group has reflection length $k$ if and only if the
associated roots are linearly independent.  Is there a similar
criterion for affine Coxeter groups?  Because of the delicate behavior
of some of our examples in the affine symmetric group, the criterion
is likely to be complicated.

\bibliography{refs} 
\bibliographystyle{amsalpha}

\bigskip

\appendix
\section{Null partition computations}\label{sec:null-comp}

Theorem \ref{thm:lr-affS} reduces the computation of reflection length
in the affine symmetric group (which involves minimizing something
over an infinite search space) to the finite problem of computing two
combinatorial quantities: the number of cycles $|\cyc(\pi)|$ and the
relative nullity $\nll(\lambda/\pi)$.  Counting the number of cycles in
a permutation is an old problem.  In the rest of this section, we
focus on the computation of the nullity of a vector.

A first observation is that this task is computationally intractable
in general: the NP-complete problem \texttt{SubsetSum} asks, ``given a
(multi)set of integers, does it have a subset with sum $0$?''  Given
an instance $S$ of \texttt{SubsetSum}, we may reduce it to a problem
of relative nullity, as follows: create the vector $\lambda$ whose
entries consist of the entries of $S$ in some order (with correct
multiplicities) followed by the entry $- \sum_{s \in S} s$ and compute
the nullity $\nll(\lambda/\id)$.  We have that the largest partition
size is $1$ if and only if $S$ has no subset summing to $0$, answering
a \texttt{SubsetSum} instance.

Nevertheless, it is possible in practice to compute nullity by hand in
examples of reasonable size, by first identifying the null blocks, then
the minimal null blocks, then maximal null partitions and finally the
nullity.  Throughout this section, $v \in L(\Phi_n)$ is always an
integer vector with coordinate sum $0$ and we illustrate our
procedures using the specific vector $v_0 = (-3,-2,-2,-1,1,2,5) \in
\R^7$.  To compute the null blocks of a vector we divide its
coordinates according to their sign.

\begin{defn}[Positive and negative weights]\label{def:weights}
  Let $X = \{ i \mid v_i>0\}$, $Y = \{i \mid v_i <0 \}$ and $Z = \{i
  \mid v_i = 0\}$. Of course $X \cup Y \cup Z = [n]$.  Recall that for
  each $B \subset [n]$, $v_B = \langle v,\one_B\rangle$ is the sum of
  the coordinates indexed by the numbers in $B$ and that saying $B$ is
  a null block means $v_B = 0$.  For each subset $B \subset [n]$, let
  $B_+ = B \cap X$, $B_- = B \cap Y$ and $B_0 = B \cap Z$. We call
  $v_{B_+}$ the \emph{positive weight of $B$} and $v_{B_-}$ the
  \emph{negative weight of $B$} and note that $B$ is a null block if
  and only if $v_{B_+} + v_{B_-} = 0$.
\end{defn}

To find the null partitions of maximal size, it suffices to restrict
our attention to those constructed out of null blocks that are
minimal under inclusion.

\begin{lem}[Maximal null partitions]
  Let $P$ be a null partition of $v\in V$.  If some block $B \in P$ is
  not minimal among the null blocks of $v$, then $P$ is not maximal
  among the null partitions of $v$.  In particular, maximal null
  partitions are constructed out of minimal null blocks.
\end{lem}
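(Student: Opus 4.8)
The plan is to prove this by a direct splitting argument: given a null partition $P$ of $v$ containing a block $B$ that is not minimal among the null blocks of $v$, I would produce a strictly larger null partition, so that $P$ cannot have maximal size. By the definition of minimality, the non-minimal null block $B$ properly contains some null block $B'$, i.e.\ $\emptyset \neq B' \subsetneq B$, and the idea is to split $B$ into $B'$ and the remaining piece $C = B \setminus B'$.

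The one fact I would isolate first is that the block-sum functional $v_B = \langle v, \one_B \rangle$ of Definition~\ref{def:l-maps} is additive over disjoint unions: if $B = B' \sqcup C$ then $\one_B = \one_{B'} + \one_C$, whence $v_B = v_{B'} + v_C$. Applying this to $C = B \setminus B'$, which is nonempty precisely because the containment $B' \subsetneq B$ is proper, the two equalities $v_B = 0$ (as $B$ is a block of the null partition $P$) and $v_{B'} = 0$ (as $B'$ is a null block) force $v_C = v_B - v_{B'} = 0$. Hence $C$ is itself a null block.

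With both $B'$ and $C$ null blocks, I would let $P'$ be the partition obtained from $P$ by removing $B$ and inserting $B'$ and $C$ in its place; since $B' \sqcup C = B$, this is a genuine partition of $[n]$, and every block of $P'$ is a null block, so $P' \in \nprt(v)$. As the split raises the block count by one, $\size{P'} = \size{P} + 1$, showing $P$ is not of maximal size among the null partitions of $v$. The closing ``in particular'' is then the contrapositive: a maximal null partition has no non-minimal block, so all of its blocks are minimal null blocks. I do not expect a real obstacle here; the only point needing attention is verifying that $C$ is nonempty, which is exactly the properness of $B' \subsetneq B$.
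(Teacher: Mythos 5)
Your proof is correct and is essentially the same argument as the paper's: split the non-minimal block $B$ into a proper null sub-block $B'$ and its complement $C = B\setminus B'$, observe that additivity of $v_B$ forces $v_C = 0$, and refine $P$ into a strictly larger null partition. Your write-up is slightly more explicit about the additivity step and the nonemptiness of $C$, but the route is identical.
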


\begin{proof}
  Let $B' \subset B$ be a proper nonempty subset of $B$ where both are
  null blocks of $v$. Since the sum of the coordinates indexed by $B$
  and by $B'$ add to $0$, this means that the coordinates in $B'' = B
  \setminus B'$ also add to $0$.  Thus $B''$ is a null block of $v$
  and the refined partition that replaces the block $B$ with the pair
  of blocks $B'$ and $B''$ is a new null partition of $v$ of strictly
  larger size.
\end{proof}

We now describe a handy way to identify minimal null blocks.

\begin{defn}[Profiles]\label{def:profile}
  We create two lists, $\Pos(v)$ and $\Neg(v)$.  Each list has length
  equal to $v_X = -v_Y$ and each entry is initially the empty set.
  For each nonempty subset of $B_+ \subset X$ we add the set $B_+$ to
  the collection of sets in the $i$-th position of the first list
  where $i = v_{B_+}$ is the (positive) weight of $B_+$.  The result
  is $\Pos(v)$, the \emph{positive profile of $v$}.  Similarly, for
  each nonempty subset of $B_- \subset Y$ we add the set $B_-$ to the
  collection of sets in the $i$-th position in the second list, where
  $i = -v_{B_-}$ is the absolute value of the negative weight of
  $B_-$.  The result is $\Neg(v)$, the \emph{negative profile of $v$}.
  Let $\Pos(v,i)$ be the $i$-th entry in the list $\Pos(v)$ and let
  $\Neg(v,i)$ be the $i$-th entry in the list $\Neg(v)$.
\end{defn}

\begin{rem}[Null blocks]
  If $B$ is a null block of positive weight $i$ then $B_+$ is in
  $\Pos(v,i)$, $B_-$ is in $\Neg(v,i)$ and $B_0 \subset Z$.
  Conversely, for every choice of $B_+ \in \Pos(v,i)$, $B_- \in
  \Neg(v,i)$ and $B_0 \subset Z$, the set $B = B_+ \cup B_- \cup B_0$
  is a null block of positive weight $i$.  Since we are ultimately
  interested in minimal null blocks, we ignore the subset of $Z$ and
  call a null block with $B \cap Z = \emptyset$ a \emph{basic null
    block}.  To create a list of all basic null blocks for $v$ we
  simply take the ``dot product'' of the positive and negative
  profiles of $v$.  Concretely $\Basic(v,i) = \{ B_+ \cup B_- \mid B_+
  \in \Pos(v,i), B_- \in \Neg(v)\}$ is the collection of all
  \emph{basic null blocks of positive weight $i$} and if $k$ is the
  positive weight of $[n]$, then $\Basic(v) =
  (\Basic(v,1),\ldots,\Basic(v,k))$ is a list of all basic null
  blocks, filtered by their positive weight.
\end{rem}

\begin{figure}
  \begin{tikzpicture}[scale=.9]
    \draw (0,0) grid (8,8);
    \draw[line width=2] (0,8)--(0,0)--(8,0);
    \draw[dashed] (0,0)--(8,8);
    \foreach \x in {1,...,7}{
      \draw (0,\x) node[Square] {$\x$};
      \draw (\x,0) node[Square] {$\x$};
    }
    \node[Circle] at (1,1) {$1$};
    \node[Circle] at (2,2) {$2$};
    \node[Circle] at (3,3) {$3$};
    \node[Circle] at (4,4) {$0$};
    \node[Circle] at (5,5) {$3$};
    \node[Circle] at (6,6) {$2$};
    \node[Circle] at (7,7) {$1$};
    \begin{scope}[left, color=red]
      \node at (-.5,1) {$4$};
      \node at (-.5,2) {$2,3$};
      \node at (-.5,3) {$1,24,34$};
      \node at (-.5,4) {$14,23$};
      \node at (-.5,5) {$12,13,234$};
      \node at (-.5,6) {$124,134$};
      \node at (-.5,7) {$123$};
    \end{scope}
    \begin{scope}[below, color=blue]
      \node at (1,-.5) {$5$};
      \node at (2,-.5) {$6$};
      \node at (3,-.5) {$56$};
      \node at (4,-.5) {};
      \node at (5,-.5) {$7$};
      \node at (6,-.5) {$57$};
      \node at (7,-.5) {$67$};
    \end{scope}
  \end{tikzpicture}
  \caption{A graphical representation of the positive and negative
    profiles and the basic null blocks of $v_0$.\label{fig:profile}}
\end{figure}
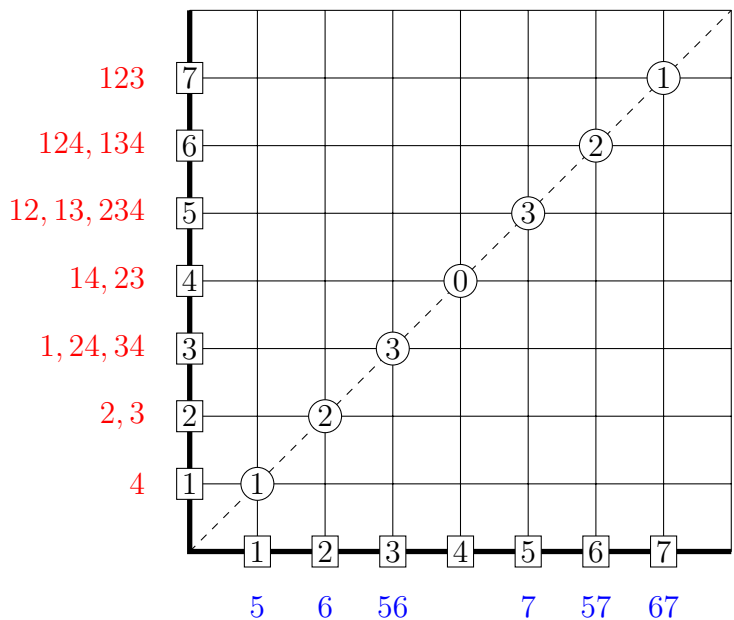

\begin{example}[Profiles]\label{ex:profile}
  For the vector $v_0 = (-3,-2,-2,-1,1,2,5)$, $X = \{5,6,7\}$, $Y =
  \{1,2,3,4\}$, $Z = \{\}$ and its positive weight is $8$.  The
  positive profile $\Pos(v_0)$, in simplified notation, is shown
  beneath the $x$-axis in Figure~\ref{fig:profile}.  Each proper
  nonempty subset of $X$ is shown without internal commas and
  surrounding braces and placed beneath the $x$-axis at a location
  indicating its (positive) weight.  The full set $X$ is included as a
  set in $\Pos(v_0,8)$ but it is not displayed.  Similarly, the
  simplified negative profile of $v_0$ is shown to the left of the
  $y$-axis.  The number on the diagonal at location $(i,i)$ is the
  product $\size{\Pos(v_0,i)} \cdot \size{\Neg(v_0,i)} =
  \size{\Basic(v_0,i)}$, the number of basic null blocks of positive
  weight $i$.  The evident symmetry is not accidental since the
  complement of a proper null block is a proper null block.  From this
  data we find that $v_0$ has fourteen proper null blocks: $\{4,5\}$,
  $\{2,6\}$, $\{3,6\}$, $\{1,5,6\}$, $\{2,4,5,6\}$, $\{3,4,5,6\}$,
  $\{1,2,7\}$, $\{1,3,7\}$, $\{2,3,4,7\}$, $\{ 1,2,4,5,7\}$,
  $\{1,3,4,5,7\}$ and $\{ 1,2,3,6,7\}$.
\end{example}

\begin{rem}[Finding minimal null blocks]\label{def:min-null}
  If $B$ is a basic null block of $v$ that is not minimal, then it
  must contain a basic null block of strictly smaller positive weight.
  This leads to an algorithm that quickly identifies the minimal null
  blocks from the filtered list $\Basic(v)$.  Let $L$ be a copy of
  $\Basic(v)$ and we proceed to modify $L$ working from left to right.
  The basic null blocks, if any, in the first entry of $L$ are
  necessarily minimal null blocks, so we count them as confirmed and
  remove from $L$ any basic null blocks (to the right) that contain
  one of these as a subset.  At this point, the basic null blocks
  remaining in the second entry are necessarily minimal, we count them
  as confirmed and remove basic null blocks that contain one of them
  as a subset.  And so on.  In the end the only blocks that remain are
  minimal null blocks.
\end{rem}

\begin{figure}
  $\begin{array}{c}
    \mid (45)(26, 36)(156, 2456, 3456)(127, 137, 2347)(12457, 13457) (12367)\\
    \leadsto (45) \mid (26, 36)(156)(127, 137,  2347)(12367)\\
    \leadsto (45)(26, 36) \mid (156)(127, 137, 2347)\\
    \leadsto (45)(26, 36)(156) \mid (127, 137, 2347)\\
    \leadsto (45)(26, 36)(156)(127, 137, 2347) \mid\\
  \end{array}$
  \medskip
 \caption{Minimual null block algorithm applied to $v_0$.
\label{fig:min-null}}
\end{figure}

\begin{example}[Finding minimal null blocks]\label{ex:min-null}
  Figure~\ref{fig:min-null} shows the progress of the minimum null
  block algorithm described in Definition~\ref{def:min-null} when
  applied to $v_0$, presented in a simplified notation.  As in
  Example~\ref{ex:profile} blocks are listed without commas and
  braces, commas separate distinct blocks. Parentheses are used to
  indicate blocks with the same positive weight and these sets are
  listed from left to right according to their common positive weight.
  The vertical bar separates the confirmed minimal null blocks to the
  left from the unprocessed basic null blocks to the right.  At the
  first step the null block $\{4,5\}$ is seen to be minimal and we
  remove $\{2,4,5,6\}$, $\{3,4,5,6\}$, $\{1,2,4,5,7\}$ and
  $\{1,3,4,5,7\}$ from the list.  In the second step both $\{2,6\}$
  and $\{3,6\}$ are confirmed as minimal and $\{1,2,3,6,7\}$ is
  removed from the list.  The third step confirms $\{1,5,6\}$ is
  minimal and the fourth step confirms $\{1,2,7\}$, $\{1,3,7\}$ and
  $\{2,3,4,7\}$ are minimal.  Thus $v_0$ has exactly $7$ minimal null
  blocks.
\end{example}

Once the minimal null blocks have been identified, it remains to
identify the maximal null partitions, and one way to do this is to
build a flag simplicial complex.

\begin{defn}[Null partition complex]\label{def:complex}
  Let $\Gamma$ be a graph with vertices indexed by the minimal null
  blocks of $v$ and with an edge connecting two vertices if and only
  if their minimal null blocks are disjoint.  Every graph can be
  turned into a flag simplicial complex by adding a simplex spanning a
  subset of vertices if and only if the $1$-skeleton of that simplex
  is already visible in the graph.  In other words, there is a simplex
  in the complex for every complete subgraph of the graph.  Let
  $\ncplx(v)$ be the flag simplicial complex built from $\Gamma$.  We
  call $\ncplx(v)$ the \emph{null partition complex of $v$}.  Since
  complements of null blocks are null blocks and every null block is a
  union of pairwise disjoint minimal null blocks, a null partition of
  $v$ is maximal if and only if it corresponds to a maximal simplex in
  $\ncplx(v)$.  This direct correspondence means that the nullity of a
  vector is one more than the dimension of its null partition complex.
  In symbols, $\nll(v) = \dim(\ncplx(v)) + 1$.
\end{defn}

\begin{figure}
  \begin{tikzpicture}
    \draw[Poly] (0,0)--(30:2)--(-30:2)--cycle;
    \draw[Poly] (0,0)--(150:2)--(-150:2)--cycle;
    \draw[Poly] (-1,-2)--(1,-2);
    \node[Rounded] at (0,0) {$45$};
    \node[Rounded] at (150:2) {$127$};
    \node[Rounded] at (-150:2) {$36$};
    \node[Rounded] at (30:2) {$137$};
    \node[Rounded] at (-30:2) {$26$};    
    \node[Rounded] at (-1,-2) {$156$};    
    \node[Rounded] at (1,-2) {$2347$};
  \end{tikzpicture}
  \caption{Null partition complex for $v_0$.\label{fig:complex}}
\end{figure}
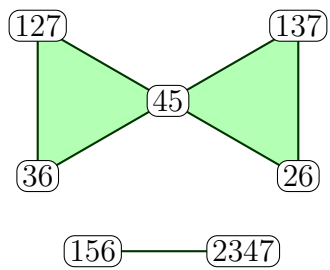

\begin{example}[Null partition complex]\label{ex:complex}
  The null partition complex of $v_0$ is shown in
  Figure~\ref{fig:complex}.  It has $7$ vertices, $7$ edges and $2$
  triangles.  There are three maximal simplices: two triangles and an
  edge and these correspond to maximal null partitions
  $\{\{1,2,7\},\{3,6\},\{4,5\}\}$, $\{\{1,3,7\},\{2,6\},\{4,5\}\}$ and
  $\{\{1,5,6\},\{2,3,4,7\}\}$, respectively.  The dimension of
  $\ncplx(v_0)$ is $2$ and its nullity is $3$.
\end{example}

\end{document}